\newcommand{\cC}{{\mathcal{C}}}
\newcommand{\cH}{{\mathcal{H}}}
\newcommand{\cF}{{\mathcal{F}}}
\newcommand{\cS}{{\mathcal{S}}}
\newcommand{\Ex}{{\mathbb{E}}}
\def\Ex{{\textup{E}}}
\def\Vol{{\textup{Vol}}}
\def\lrb{{\underline{R}_k}} 
\def\urb{{\overline{R}_k}} 
\def\hd{{h_{{\textup{d}}}}}
\def\hs{{h_{{\textup{s}}}}}
\def\Pr{\mathrm{Pr}}
\def\Tmix{\Delta_{{\tiny \textup{mix}}}}
\def\Thit{\Delta_{{\tiny \textup{hit}}}}
\theoremstyle{plain}
\newtheorem{theorem}{Theorem}
\newtheorem{lemma}{Lemma}[section]
\newtheorem{proposition}[lemma]{Proposition}
\theoremstyle{remark}
\newtheorem{remark}[lemma]{Remark}
\newtheorem{fact}[lemma]{Fact}
\let\mymakefnmark\@makefnmark
\let\mythefnmark\@thefnmark
\newcommand{\restorefn}{\let\@makefnmark\mymakefnmark
\let\mythfnmakr\@thefnmark}
\begin{document}

\begin{frontmatter}
  \title{Dynamic Spatial Matching}

\runtitle{Dynamic Spatial Matching}

\begin{aug}
\author{\fnms{Yash} \snm{Kanoria}\ead[label=e1]{ykanoria@columbia.edu}}
\address{Graduate Business School, Columbia University, \printead{e1}}
\end{aug}

\begin{abstract}
Motivated by a variety of online matching platforms, we consider demand and supply units which are located
i.i.d. in $[0,1]^d$, and each demand unit needs to be matched with a supply unit. The goal is to
minimize the expected average distance between matched pairs (the ``cost'').
We model dynamic arrivals of one or both of demand and supply with uncertain locations of
future arrivals, and characterize the scaling behavior of the achievable cost in terms of system size (number of supply units), as a function of the
dimension $d$. Our achievability results are backed by  concrete matching algorithms. Across cases, we find that the platform can achieve cost (nearly) as low
as that achievable if the locations of future arrivals had been known beforehand.
Furthermore, in all cases except one, cost nearly as low in terms of scaling as the expected distance to
the nearest neighboring supply unit is achievable, i.e., the matching constraint does not cause an increase in cost either. The aberrant case is where only demand arrivals are dynamic, and $d=1$;
excess supply significantly reduces cost in this case. 
\end{abstract}

\begin{keyword}
\kwd{dynamic arrivals}
\kwd{matching algorithms}
\kwd{distance}
\kwd{spatial heterogeneity}
\kwd{length scales}
\end{keyword}
%

\end{frontmatter}


\section{Introduction}
\label{sec:intro}

Online matching platforms serve to match units (participants) on the two sides of the market, in contexts such as car or bike rides, lodging, dating, labor and organ exchanges.
This paper aims to understand and quantify the costs arising from stochastic spatial heterogeneity in dynamic matching as a function of market ``thickness'' and the dimensionality of the relevant ``space'' in which supply and demand live, and to identify near optimal control policies to perform such matching over time. Here space can refer to physical space, as may be relevant, e.g., in ridehailing, as well as to other relevant attributes, e.g., for a lodging platform like Airbnb or VRBO, in addition to physical location (which may be two-dimensional), there are other relevant attributes such as the  ``quality'' of the unit and the price. Supply units live in this attribute space and demand units can also be thought of as living in this same attribute space, with the location of a demand unit being interpreted as the ideal attribute vector desired.

Spatial matching costs (distances) in \emph{static} models with \emph{balanced} (i.e., equal) supply and demand, were precisely quantified as a function of ``thickness'' (density of points) and the dimensionality of the space in the seminal works of Ajtai, Komlos \& Tusnady \cite{ajtai1984optimal}, Leighton and Shor \cite{leighton1989tight}, and Talagrand \cite{talagrand1992matching}, among others. The problem studied in \cite{ajtai1984optimal,talagrand1992matching} is as follows; our  first model (which will serve as a benchmark) will be a generalization. There are $N$ \emph{supply} points and $N$ \emph{demand} points i.i.d. uniformly located in the $d$-dimensional unit hypercube $\cC \triangleq [0,1]^d$, and one wants to characterize the (expected) \emph{cost}, i.e., the average matching distance, of the minimum cost perfect matching between demand and supply, 
in terms of its scaling behavior with $N$. (For concreteness, we will consider Euclidean distance; all our results extend immediately to any norm equivalent to the Euclidean norm.) 
Now, the expected distance between a demand unit and the nearest supply  unit (henceforth, the \emph{nearest-neighbor-distance}) is $\Theta(1/N^{1/d})$, and this is clearly a lower bound on the expected average matching distance. \cite{ajtai1984optimal,talagrand1992matching} develop the following remarkable picture:
\begin{itemize}
  \item For $d \geq 3$, the expected minimum cost (average distance) is $\Theta(1/N^{1/d})$, which is the same scaling as the nearest-neighbor-distance  \cite{talagrand1992matching}. 
  \item For $d=2$, the expected minimum cost is $\Theta(\sqrt{\log N}/\sqrt{N})$ \cite{ajtai1984optimal}. 
       The nearest-neighbor-distance in this case is $\Theta(1/\sqrt{N})$, and we see that the expected minimum cost is a factor $\Theta(\sqrt{\log N})$ larger.
  \item For $d=1$, the expected minimum cost is $\Theta(1/\sqrt{N})$. This is much larger than the nearest-neighbor-distance, which is only $\Theta(1/N)$.
\begin{fact}[1-dimensional static random matching\footnote{There is a minimum cost matching as follows: order both demand and supply from left to right and then match units at the same rank in the ordering to each other. The result then follows from Donsker's theorem. (The expected cost of this matching is $\Theta(1/\sqrt{N})$ since the $f$-th fractile of demand and supply are separated by $\Theta(\sqrt{f(1-f)/N})$ distance in expectation, for all $f \in (0,1)$.)}]
\label{fact:AKT-1d-is-wrong}
Consider $N$ ``supply'' points and $N$ ``demand'' points, i.i.d. uniformly distributed in $[0,1]$. Then the minimum cost perfect matching between the two sets of points has expected average matching distance $\Theta(1/\sqrt{N})$, whereas the expected distance between a given demand unit and the closest supply unit is only $\Theta(1/N)$.
\end{fact}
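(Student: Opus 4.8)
The footnote already contains the two main ideas; the plan is to fill in the details. \emph{First}, I would establish that in one dimension the minimum-cost perfect matching is the \emph{sorted} one: if $x_{(1)}\le\cdots\le x_{(N)}$ are the ordered supply locations and $y_{(1)}\le\cdots\le y_{(N)}$ the ordered demand locations, then pairing $x_{(i)}$ with $y_{(i)}$ minimizes $\sum_{i}|x_{(i)}-y_{(i)}|$ over all perfect matchings. This is the standard rearrangement argument: any matching containing an ``inversion'', i.e.\ two matched edges $x_a\!\to\!y_c$ and $x_b\!\to\!y_d$ with $x_a<x_b$ but $y_c>y_d$, can be weakly improved by un-crossing it, since $|x_a-y_d|+|x_b-y_c|\le|x_a-y_c|+|x_b-y_d|$; iterating removes all inversions without increasing cost. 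Hence the expected cost equals $\tfrac1N\,\Ex\sum_{i=1}^N |x_{(i)}-y_{(i)}|$.

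\emph{Second}, I would rewrite this in terms of empirical CDFs. The sorted coupling is exactly the optimal transport plan between the two empirical measures on $[0,1]$, so
\[
\frac1N\sum_{i=1}^N |x_{(i)}-y_{(i)}| \;=\; \int_0^1 \bigl|\hat F_N(t)-\hat G_N(t)\bigr|\,dt \;=\; \int_0^1 |D_N(t)|\,dt ,
\]
where $\hat F_N,\hat G_N$ are the empirical CDFs of supply and demand and $D_N(t):=\hat F_N(t)-\hat G_N(t)=\tfrac1N\sum_{j=1}^N\bigl(\mathbbm{1}[U_j\le t]-\mathbbm{1}[V_j\le t]\bigr)$. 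For each fixed $t$, $N D_N(t)$ is a sum of $N$ i.i.d.\ mean-zero summands bounded by $1$, with per-term variance $\sigma_t^2:=2t(1-t)$, so $\Ex D_N(t)^2=\sigma_t^2/N$. The upper bound is then immediate: by Jensen/Cauchy--Schwarz, $\Ex\int_0^1|D_N(t)|\,dt\le\int_0^1\sqrt{\sigma_t^2/N}\,dt=O(1/\sqrt N)$.

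\emph{Third} (the lower bound, and the only slightly delicate point), I need $\Ex\int_0^1 |D_N(t)|\,dt=\Omega(1/\sqrt N)$, i.e.\ that no cancellation occurs. For $t\in[\tfrac14,\tfrac34]$ the variance $\sigma_t^2$ is bounded above and below by absolute constants, and for a sum $S$ of $N$ i.i.d.\ mean-zero variables bounded by $1$ with variance $\sigma^2\in(0,1]$ one has $\Ex S^4\le 3N^2\sigma^4+N\sigma^2\le 4N^2\sigma^2$. Applying the Paley--Zygmund-type bound $\Ex|S|\ge (\Ex S^2)^{3/2}/(\Ex S^4)^{1/2}$ with $S=N D_N(t)$ (so $\Ex S^2=\sigma_t^2 N$) gives $\Ex|D_N(t)|\ge c/\sqrt N$ uniformly for $t\in[\tfrac14,\tfrac34]$, hence $\Ex\int_0^1 |D_N(t)|\,dt\ge c'/\sqrt N$. (Equivalently, one may invoke Donsker's theorem, $\sqrt N\,D_N\Rightarrow\sqrt2\,B^{\circ}$ in $C[0,1]$ with $B^{\circ}$ a Brownian bridge, together with the uniform integrability supplied by the $L^2$ bound, to get $\sqrt N\,\Ex\int_0^1|D_N|\,dt\to\sqrt2\,\Ex\int_0^1|B^{\circ}(t)|\,dt\in(0,\infty)$.) Combining the two bounds yields expected cost $\Theta(1/\sqrt N)$.

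\emph{Finally}, for the nearest-neighbor distance, fix a demand unit at uniform location $V$ and let $R=\min_{i\le N}|U_i-V|$. Conditioning on $V=v$, $\Pr(R>r\mid V=v)=(1-\ell(v,r))^N$ where $\ell(v,r):=|[v-r,v+r]\cap[0,1]|$ satisfies $r\le\ell(v,r)\le 2r$ for $r\le 1$, with $\ell(v,r)=2r$ when $v\in[\tfrac14,\tfrac34]$ and $r\le\tfrac14$. Hence $\Ex[R\mid V=v]=\int_0^1(1-\ell(v,r))^N\,dr\le\int_0^1(1-r)^N\,dr=O(1/N)$ uniformly in $v$, while for $v\in[\tfrac14,\tfrac34]$ it is $\ge\int_0^{1/4}(1-2r)^N\,dr=\Omega(1/N)$; integrating over $v$ gives $\Ex R=\Theta(1/N)$. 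The main obstacle is the third step: the other steps are direct computations, whereas ruling out cancellation in $\int_0^1|D_N|$ requires the anti-concentration (fourth-moment) estimate or, alternatively, the Donsker-plus-uniform-integrability argument.
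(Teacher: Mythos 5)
Your proof is correct and follows the same plan as the paper's footnote sketch: the sorted (non-crossing) matching is optimal in one dimension, and the cost is then controlled by the size of empirical-process fluctuations. You phrase the fluctuations via the empirical-CDF difference $\int_0^1 |D_N(t)|\,dt$, whereas the paper's parenthetical uses the equivalent per-fractile formulation, $\Ex\big|X_{(i)}-Y_{(i)}\big| = \Theta\big(\sqrt{f(1-f)/N}\big)$ for $f=i/N$; these are the same decomposition in dual coordinates ($\int|\hat F_N-\hat G_N|\,dt = \int_0^1|\hat F_N^{-1}(u)-\hat G_N^{-1}(u)|\,du$). The one place you go genuinely beyond the sketch is the lower bound: the paper writes ``follows from Donsker's theorem,'' but Donsker alone gives only weak convergence of $\sqrt{N}\,D_N$ to $\sqrt{2}\,B^{\circ}$, and passing to expectations does require the uniform-integrability step you flag. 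Your Paley--Zygmund / fourth-moment anti-concentration route is a clean, self-contained alternative that avoids that extra argument. The nearest-neighbor-distance computation (condition on $V$, integrate the tail, lower-bound the covered length) is the standard calculation and is correct; the paper's footnote does not spell it out at all.
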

\end{itemize}

Motivated by matching platforms, we ask whether and how the above picture for random spatial matching extends to the case of matching demand and supply under \emph{dynamic} arrivals. 
The key challenge under dynamic arrivals is uncertainty about the locations of future arrivals, which means that it is unclear how to choose matches, unlike in the static case where the minimum cost matching is easy to compute. Indeed, the dynamic spatial matching problem is a dynamic programming (control) problem with the system state being the locations of the supply units currently in the system (we will assume that demand units need to be matched immediately upon arrival), which is an infinite dimensional state without evident helpful structure.
Our task is to design a near optimal \emph{algorithm} (control policy) to conduct spatial matching over time. 
And of course we want to characterize the minimum achievable expected matching cost.

Locations of demand and supply are assumed to be i.i.d. uniformly random in the unit hypercube $\cC$ throughout, to enable us to focus on the role of spatial stochasticity, as in the literature on static random spatial matching. 
We study not just the case of equal demand and supply, but allow for excess supply. Our characterizations of cost as a function of the excess supply yield guidance on capacity planning.

We provide characterizations for three natural models:  
\begin{itemize}
\item The static model above but allowing for $M$ additional supply units. This setting builds on the known characterizations \cite{ajtai1984optimal,talagrand1992matching} for the balanced case and serves as a benchmark. 
\item A ``semi-dynamic'' model which is identical to the static model above, except that demand units arrive sequentially and must be matched immediately upon arrival.
\item A ``fully dynamic'' model where both supply and demand arrive over time, and there are $m$ supply units present in the system at any time. In each period, a demand unit arrives and must be matched immediately, the matched pair leaves, and then a supply unit arrives, restoring the number of supply units in the system to $m$. Arrival locations of both demand and supply are i.i.d. uniform in $\cC$.
\end{itemize}
We remark that these models are sequenced in increasing order of technical complexity, with the fully dynamic model being the most technically challenging by far.
\smallskip

{\bf Summary of contributions.} We characterize the scaling behavior of the minimum achievable cost in each of the three models for each $d$. In the static and semi-dynamic models the scaling parameters are $N$ and $M$ (where $M\leq N$ can scale arbitrarily with $N$), and in the fully dynamic model the scaling parameter is $m$. Our upper and lower bounds match up to a constant factor in all cases, except for the fully dynamic model with $d=1$, where the bounds match up to a $\log m$ factor.  We provide a matching algorithm to achieve near optimal expected cost in the dynamic models, which we call \emph{Hierarchical Greedy}. A key feature of the algorithm is analytical tractability; roughly, it separates out the matching cost at different length scales, and incurs a cost which is no larger (in terms of scaling) than the fundamental ``obstacle to matching'' at each length scale.

Our findings reveal that in each model, the minimum cost achievable increases with $d$ (by a polynomial factor in the scaling parameter(s)); this monotonicity is unsurprising  since higher $d$ means that compatibility is measured along more dimensions, which makes the problem harder (formally, the distance between two points in $d$ dimensions is lower bounded by the distance between their projections onto $d-1$ dimensions). Our results quantify this phenomenon.

In each model and for each $d$, we compare the minimum cost achievable with the relevant nearest-neighbor-distance\footnote{The nearest neighbor distance we compare against is optimistically computed in the dynamic models by assuming a uniform spatial distribution persists. In reality, the spatial distribution of existing units which arrived earlier depends on the past matching decisions and is non-uniform in general.}, and find that for $d \geq 2$ in all models, and also for $d=1$ in the fully dynamic model, cost only slightly more than the nearest-neighbor-distance is achievable.
The cost is significantly larger (i.e., by a polynomial factor as in Fact~\ref{fact:AKT-1d-is-wrong}) \emph{only} for $d=1$ and sublinear excess supply in the static and semi-dynamic models; henceforth we refer to this case as the aberrant case. 
Notably, a similar phenomenon does \emph{not} occur for $d=1$ in the fully dynamic model, where the nearest-neighbor-distance \emph{is} achievable up to a polylogarithmic factor. Our analysis identifies that this is because the fully dynamic matching in $d=1$ dimension conceptually resembles the problem of static matching in $d+1=2$ dimensions. The intuitive reason is that time acts like an additional spatial dimension in the fully dynamic model (this feature extends to $d>1$ as well).

Our results illuminate the impact of uncertainty about the future on matching cost: Happily, in both dynamic models and all cases uncertainty about the future does \emph{not} lead to a substantial (i.e., a polynomial factor) increase in achievable matching cost.

We briefly summarize our findings regarding the benefit from excess supply. For the aberrant case $d=1$ in the static and semi-dynamic models, adding substantial excess supply $M = \omega(\sqrt{N})$ leads to a substantial reduction in the matching cost, and this is the only case in which excess supply significantly reduces cost in these two models. The cost reduction from excess supply in the aberrant case comes \emph{without} an accompanying reduction in the nearest-neighbor-distance, and occurs, informally, because excess supply smooths over stochastic mismatches in the quantities of demand and supply at the larger length scales. In the fully dynamic model and for all $d \geq 1$, increasing excess supply (i.e., the volume $m$ of ``free'' supply present in the system at any time) reduces cost as a direct consequence of reduction in the nearest-neighbor-distance, which is perhaps unsurprising and an entirely different phenomenon than that which occurs in the aberrant case.

\smallskip

{\bf Organization of the paper.} Section~\ref{sec:static-matching}  provides characterizations for static spatial matching with excess supply, and serves as a benchmark in our study of dynamic spatial matching. Section~\ref{sec:semi-dynamic} provides our model and results for the semi-dynamic model. Section~\ref{sec:fully-dynamic} provides our model and results for the fully dynamic model. Section~\ref{sec:analysis} provides our algorithm for matching and its analysis, leading to the proofs of our main achievability results. The proofs of our lower bounds are provided in an appendix. Section~\ref{sec:capacity-planning} discusses an application of our results for the fully dynamic model to capacity planning for shared mobility systems. We conclude with a discussion of related work and open directions in Section~\ref{sec:discussion}.

\section{Static Matching}
\label{sec:static-matching}

We begin with a characterization of matching costs for \emph{static} spatial matching under excess supply, which extends the celebrated characterizations for the case of no excess supply \cite{ajtai1984optimal,talagrand1992matching}. Here, all supply and demand locations are simultaneously known, and so there is no algorithmic challenge; one simply chooses the minimum cost matching. The question is that of characterizing (the scaling behavior of) the minimum cost achievable. The cost in the static setting is of interest since it will be a lower bound on the minimum cost achievable in the corresponding dynamic setting we will investigate in the next section, where locations of future demand arrivals are unknown.

\smallskip

{\bf Model.} There are $N+M$ supply units and $N$ demand units located at i.i.d. uniformly random locations in $\cC \triangleq [0,1]^d$. We are interested in characterizing the expected cost of the minimum cost matching, among matchings which match all $N$ demand units, and where the cost of a matching is the average (Euclidean) distance between matched pairs. Note that $M$ supply units are left over at the end, hence we call $M$ the \emph{excess supply}. The excess supply may help reduce the expected cost and we aim to quantify this benefit; the minimum cost for the no excess supply case $M=0$ was known from previous work. We will permit any\footnote{An exception is $d=2$, where our theorem statement does not cover ``just-sublinear'' $M \in \mathcal{I} \triangleq (N^{1-\epsilon}, \epsilon N)$. Of course the small-$M$ upper bound on cost $C_2 \sqrt{\log N/N}$ extends immediately to $M \in \mathcal{I}$ (since the problem is easier for larger $M$), as does the large-$M$ lower bound $(1/C_2)\sqrt{1/N}$ (since our problem is harder for smaller $M$), i.e., there is a $\sqrt{\log N}$ factor gap between our lower and upper bounds for $M \in \mathcal{I}$.} $M \in \{0, 1, \dots, \Theta(N)\}$.

\begin{theorem}[Minimum cost for static matching]
\label{thm:static-matching}
For $d=1$, there exists a constant $C_1 \in [1, \infty)$ such that for all $N \geq 2$ we have
\begin{align}
\textup{Minimum achievable expected cost} \in
\left \{
\begin{array}{cc}
  \left [\frac{1}{C_1 \sqrt{N}}, \frac{C_1}{\sqrt{N}} \right ] & \textup{for all } M \leq \sqrt{N} \\[5pt]
  \left [\frac{1}{C_1 M}, \frac{C_1}{M} \right ] & \textup{for all } M \in (\sqrt{N}, N ]  
\end{array}
\right  . \, .
\end{align}

For $d=2$, for any $\epsilon \in (0,1)$, there exists a constant $C_2 = C_2(\epsilon) \in [1, \infty) $ such that for all $N \geq 2$, 
we have
\begin{align}
\textup{Minimum achievable expected cost} \in
\left \{
\begin{array}{ll}
\left [ (1/C_2)\sqrt{\frac{\log N}{N}}, C_2\sqrt{\frac{\log N}{N}} \right ] & \textup{for } M \leq N^{1- \epsilon}\\
\left [ (1/C_2)\sqrt{\frac{1}{N}}, C_2\sqrt{\frac{1}{N}} \right ] & \textup{for } M \in [\epsilon N, N]
\end{array}
\right . \, .
%
\end{align}

For $d\geq 3$, there exists a constant $C_d \in [1, \infty)$ such that for all $N \geq 2$ and $M \in \{0, 1, \dots, N\}$ we have
\begin{align}
\textup{Minimum achievable expected cost} \in \left [\frac{1}{C_d N^{1/d}}, \frac{C_d}{N^{1/d}} \right ] \, .
\end{align}
\end{theorem}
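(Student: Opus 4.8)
The plan is to treat $d\ge3$, then $d=2$, then $d=1$, using two general-purpose tools plus, in $d=1$, a structural reduction. The first tool is \emph{monotonicity}: the minimum cost is nonincreasing in $M$, since discarding supply units can only hurt (the min cost with $N+M'$ supply is at most that with $N+M\le N+M'$ supply, and the latter has the distribution of the $M$-instance). This instantly carries the known $M=0$ upper bounds --- \cite{talagrand1992matching} for $d\ge3$, \cite{ajtai1984optimal} for $d=2$, Fact~\ref{fact:AKT-1d-is-wrong} for $d=1$ --- up to the small-$M$ regimes. The second tool is the \emph{nearest-neighbor lower bound}: the average matching distance is at least the average over demand units of the distance to the nearest of the $N+M$ supply units, whose expectation is $\Theta((N+M)^{-1/d})=\Theta(N^{-1/d})$ (since $M\le\Theta(N)$) by a ball-volume estimate. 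For $d\ge3$ these two facts already give the theorem (Talagrand plus monotonicity for the upper bound, nearest neighbor for the lower), and for $d=2$ with $M\in[\epsilon N,N]$ the nearest-neighbor bound gives $\Omega(1/\sqrt N)$ while the $M\le N^{1-\epsilon}$ upper bound $O(\sqrt{\log N/N})$ is \cite{ajtai1984optimal} via monotonicity. So the substance is in (i) the $d=2$ large-$M$ \emph{upper} bound $O(1/\sqrt N)$; (ii) the $d=2$ small-$M$ \emph{lower} bound $\Omega(\sqrt{\log N/N})$; (iii) the full $d=1$ picture for $M>\sqrt N$ together with the $d=1$ lower bound for $M\le\sqrt N$.

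For $d=1$ I would first reduce to a one-dimensional problem about a monotone approximation. Ordering demand $D_1<\dots<D_N$ and supply $S_1<\dots<S_{N+M}$ and writing $A(x)=\#\{i:D_i\le x\}$, $B(x)=\#\{j:S_j\le x\}$, $g=B-A$, an optimal matching is noncrossing and uses a subset of the supply, so the average cost equals $\tfrac1N\min_h\int_0^1|g(x)-h(x)|\,dx$, where $h$ (the counting function of the \emph{unused} supply) ranges over nondecreasing functions with $h(0)=0$, $h(1)=M$ (jumping only at supply points --- a constraint I would drop, relaxing $h$ to all nondecreasing functions, for lower bounds). Since $h\ge0$, $\int|g-h|\ge\int\max(-g,0)$, and $g(x)$ has mean $Mx$ and variance $\Theta(Nx(1-x))$; hence when $M\le\sqrt N$ the drift is $O(\sqrt N)$, Donsker's theorem gives $\mathbb{E}\int_{1/4}^{3/4}\max(-g,0)=\Omega(\sqrt N)$, and cost $=\Omega(1/\sqrt N)$. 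For $M>\sqrt N$ I would partition $[0,1]$ into $\Theta(M^2/N)$ blocks of length $\Delta\asymp N/M^2$; splitting block $i$ into halves $L_i,R_i$ and setting $v_i:=h(\mathrm{midpoint})$, monotonicity of $h$ gives $\int_{L_i\cup R_i}|g-h|\ge\int_{L_i}(g-v_i)+\int_{R_i}(v_i-g)=X_i$ with $X_i:=\int_{L_i}g-\int_{R_i}g$, and (using also $\int_{L_i\cup R_i}|g-h|\ge0$) cost$\,\cdot N\ge\sum_i X_i^+$. Now $X_i$ depends on $g$ only through its increments on block $i$, with $|\mathbb{E}X_i|=\Theta(M\Delta^2)$ and $\mathrm{sd}(X_i)=\Theta(\Delta^{3/2}\sqrt N)$; the choice $\Delta\asymp N/M^2$ is exactly what makes the latter dominate, so $\mathbb{E}X_i^+=\Omega(\Delta^{3/2}\sqrt N)$ and $\mathbb{E}\sum_i X_i^+=\Omega(N/M)$, i.e.\ cost $=\Omega(1/M)$. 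The matching upper bound $O(1/M)$ I would get from the hierarchical scheme analyzed in Section~\ref{sec:analysis}: across dyadic scales $\ell$, assign each demand unit to the smallest $\ell$ at which its side-$\ell$ cell locally absorbs it; a side-$\ell$ cell fails only when its demand exceeds its supply, which for $\ell\gtrsim N/M^2$ happens with super-geometrically small probability (the excess supply $\asymp M\ell$ dominates the fluctuation $\asymp\sqrt{N\ell}$), so the cost $\sum_\ell\ell\cdot\#\{\text{demand forced beyond }\ell\}\asymp\sum_{\ell\lesssim N/M^2}\sqrt{N\ell}\asymp N/M$, i.e.\ average cost $O(1/M)$.

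For $d=2$, the large-$M$ upper bound $O(1/\sqrt N)$ for $M\in[\epsilon N,N]$ comes from the same hierarchical scheme: a side-$\ell$ cell carries $\approx N\ell^2$ demand and $\approx(N+M)\ell^2$ supply, so demand exceeds supply there with probability at most $\exp(-\Theta((M/N)^2N\ell^2))=\exp(-\Theta(N\ell^2))$ once $\ell\gg\sqrt N/M=\Theta(1/\sqrt N)$, which is super-geometrically small in the scale index, so $\sum_\ell\ell\cdot\#\{\text{demand forced beyond }\ell\}=O(\sqrt N)$ and the average cost is $O(1/\sqrt N)$. The small-$M$ lower bound $\Omega(\sqrt{\log N/N})$ for $M\le N^{1-\epsilon}$ is the Ajtai--Koml\'os--Tusn\'ady multiscale lower bound made robust to excess supply: in a side-$\ell$ cell the count fluctuation $\Theta(\sqrt N\,\ell)$ dominates the systematic supply surplus $\Theta(M\ell^2)$ precisely when $\ell\le\sqrt N/M$, and the dyadic scales with $1/\sqrt N\le\ell\le\sqrt N/M$ number $\log_2(N/M)\ge\epsilon\log_2 N=\Theta(\log N)$ exactly when $M\le N^{1-\epsilon}$; I would then run the usual Kantorovich-dual argument (pair the demand/supply measures against a random $\pm1$-weighted sum of unit bumps supported on the cells of these scales and lower-bound its expected absolute value via a second/fourth-moment computation) over just these $\Theta(\log N)$ scales, reproducing the $\Omega(\sqrt{\log N/N})$ bound.

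I expect the main obstacle to be obtaining the logarithm-free constants in the large-$M$ regimes --- the value $\Theta(1/M)$ for $d=1$, $M>\sqrt N$ and $\Theta(1/\sqrt N)$ for $d=2$, $M=\Theta(N)$. For the upper bounds this needs a genuine multiscale argument with the correct base scale (so that excess supply inside a cell absorbs the local count fluctuation with super-geometrically small failure probability), together with careful bookkeeping of the fact that after a round of within-cell matching the surviving demand points are a data-dependent, non-i.i.d.\ subset --- precisely what the Hierarchical Greedy analysis of Section~\ref{sec:analysis} is designed to handle. For the lower bounds, the delicate point is pinning down the right length scale ($\Delta\asymp N/M^2$ for $d=1$; the window $1/\sqrt N\le\ell\le\sqrt N/M$ for $d=2$) and verifying that fluctuations there dominate the systematic supply surplus --- which is also what distinguishes the $M\le N^{1-\epsilon}$ and $M\in[\epsilon N,N]$ behaviors in $d=2$.
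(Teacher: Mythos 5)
Your proposal is correct, and while the upper-bound side follows essentially the same route as the paper (delegating to the Hierarchical Greedy analysis of Section~\ref{sec:analysis} for the large-$M$ regimes, and handling small $M$ via known $M=0$ results --- the paper routes all upper bounds through Theorem~\ref{thm:semi-dynamic}, whereas your monotonicity-plus-classical-results shortcut for small $M$ is a simpler observation that suffices in the static setting and avoids invoking the gravitational-matching algorithm for $d=2$ small $M$), the $d=1$ lower-bound argument you give is genuinely different from the paper's, and worth contrasting. The paper works with the \emph{crossing function} $f_\pi(x)=\#\{$matched pairs whose endpoints straddle $x\}$, writes total cost as $\int_0^1 f_\pi(x)\,dx$, and shows $\mathbb{E}[\min_\pi(f_\pi(x)+f_\pi(x+L_0))]=\Omega(N/M)$ for $L_0=N/(2M^2)$ by arguing that the excess of demand over supply in $[x,x+L_0]$ --- which has expectation $\Omega(N/M)$ since the fluctuation $\Theta(\sqrt{NL_0})$ matches the deterministic surplus $\Theta(ML_0)$ --- must be matched across one of the two endpoints; integrating over $x$ and using $\int_0^{1-L_0}(f(x)+f(x+L_0))\,dx\le 2\int_0^1 f$ closes it. You instead exploit the exact 1-d representation $\textup{cost}\cdot N=\min_h\int_0^1|g-h|\,dx$ with $g=B-A$ and $h$ the (nondecreasing) counting function of discarded supply, relax $h$, split each block of length $\Delta\asymp N/M^2$ into halves $L_i,R_i$, and use monotonicity of $h$ to get $\int_{L_i\cup R_i}|g-h|\ge X_i^+$ with $X_i=\int_{L_i}g-\int_{R_i}g$ depending only on block-local increments of $g$, whose sd $\Theta(\Delta^{3/2}\sqrt N)$ balances $|\mathbb{E}X_i|=\Theta(M\Delta^2)$ at precisely this $\Delta$; summing $\mathbb{E}X_i^+=\Omega(\Delta^{3/2}\sqrt N)$ over $\Theta(M^2/N)$ disjoint blocks gives $\Omega(N/M)$. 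Both proofs pivot on the same critical length scale $N/M^2$, but yours eliminates the matching $\pi$ from the picture up front and gets to sum over disjoint blocks with an independence structure, which is arguably cleaner; the paper's version works directly with an arbitrary matching and uses overlapping windows. Also note that your observation $\int|g-h|\ge\int\max(-g,0)$ (from $h\ge0$) immediately handles both the $d=1$ small-$M$ lower bound \emph{and} recovers the classical $M=0$ Fact~\ref{fact:AKT-1d-is-wrong}, whereas the paper gets the small-$M$ lower bound only implicitly via monotonicity from the $M=\lceil\sqrt N\rceil$ case. One small point to be explicit about if you write this up fully: when you claim $\text{sd}(X_i)=\Theta(\Delta^{3/2}\sqrt N)$, you are implicitly conditioning away the between-block dependence (or using that $X_i$ is a function only of within-block increments, which are independent of $g(a_i)$), and you should note that $\mathbb{E}X_i^+=\Omega(\text{sd}(X_i))$ needs the mean-to-sd ratio to be $O(1)$, which is exactly what your choice $\Delta\asymp N/M^2$ guarantees.
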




Theorem~\ref{thm:static-matching} quantifies how excess supply helps in the case $d=1$. For small excess supply $M \leq \sqrt{N}$,  the cost scales as $\Theta(1/\sqrt{N})$, as in the well known case $M=0$ where this scaling is an immediate consequence of Donsker's theorem. Notably, this cost scaling is nearly as large as that for $d=2$; ensuring proximity along two dimensions is not much harder than ensuring proximity in one dimension. For $d=1$ and substantial excess supply $M > \sqrt{N}$, the cost falls to $\Theta(1/M)$. In particular, for linear excess supply $M= \Theta(N)$, the cost has the same scaling as the nearest-neighbor-distance $\Theta(1/N)$.

In the case $d=2$, the minimum cost for sublinear excess supply $M \leq N^{1-\epsilon}$ scales as $\Theta\big (\sqrt{\tfrac{\log N}{N}} \big)$, as in the Ajtai-Komlos-Tusnady theorem \cite{ajtai1984optimal} concerning the case $M=0$. Linear excess supply $M = \Theta(N)$ eliminates the $\sqrt{\log N}$ factor and allows to achieve the nearest-neighbor-distance scaling $\Theta(1/\sqrt{N})$.

In the cases $d\geq 3$, even with $M=0$ the cost is already as small as the nearest-neighbor-distance $\Theta(1/N^{1/d})$ as known from the work of Talagrand \cite{talagrand1992matching}, and this clearly extends to any $M \leq N$.

\section{Semi-dynamic model}
\label{sec:semi-dynamic}

We now consider a dynamic version of the same model, where supply units are present beforehand but where demand units arrive sequentially and need to be matched immediately upon arrival. Uncertainty about the locations of future demand arrivals poses a challenge, and leads to two questions: How much larger is the minimum achievable cost relative to the static model, i.e., how large is the burden imposed by uncertainty about the future? And how should we assign a supply unit to each demand unit to achieve near optimal cost?

\smallskip
{\bf Model.} There are $N+M$ supply units present at the outset,  located at i.i.d. uniformly random locations in $\cC=[0,1]^d$. No further supply units will arrive thereafter and supply units do not change their location. The following occurs at each period $t$ for $t = 1, 2, \dots, N$.
\begin{itemize}
  \item A demand unit arrives at a uniformly random location in $\cC$ (independent of the history so far).
  \item It must be matched immediately to one of the supply units resulting in a cost equal to the distance  between them. The matched pair leaves.
\end{itemize}
The problem is again to minimize the expected average distance between matched pairs.

Such a model may be a reasonable approximation of reality in certain real world settings, for example, for a lodging platform, it may provide a way to think about the problem of dynamically managing inventory of supply for a given geographical area and date(s) (e.g., a particular weekend in the Lake Tahoe area), if suppliers post their inventory in advance but demand arises closer to the date.

The following main result provides a characterization of the minimum average matching cost achievable for all $d$ and all $M$. Note that all lower bounds are ``inherited'' from the static matching model, since uncertainty about the location of future demand only makes things harder, and the two models are identical in all other respects. The content of the theorem lies in the upper bounds.

\begin{theorem}
All bounds on the expected average match distance in Theorem~\ref{thm:static-matching} hold verbatim for the semi-dynamic model.\footnote{In other words, there exist constants $(C_d)_{d=2}^\infty$ and $C_1(\epsilon)$ such that the bounds apply to both the static model and the semi-dynamic model.}
%
\label{thm:semi-dynamic}
\end{theorem}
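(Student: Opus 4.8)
The plan is to exhibit, for each regime of $d$ and $M$, a matching algorithm whose expected average cost matches (up to a constant factor) the static upper bound from Theorem~\ref{thm:static-matching}; the lower bounds are inherited verbatim since the semi-dynamic problem only adds a constraint (immediate matching with future demand locations unknown) to the static problem. The natural engine is the \emph{Hierarchical Greedy} algorithm advertised in the introduction: partition the cube $\cC=[0,1]^d$ into a nested dyadic hierarchy of subcubes at length scales $2^{-k}$, and for an incoming demand unit try to match it with a free supply unit in the smallest subcube containing it that still has free supply, descending scale by scale. The cost incurred by any demand unit is then at most a geometric sum dominated by the coarsest scale at which it fails to find a partner, so the analysis reduces to controlling, at each scale $k$, the probability (or typical number) of subcubes that have been depleted of supply.

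The key steps I would carry out are: (i) Set up the dyadic hierarchy and the algorithm formally, and reduce the expected cost to $\sum_k 2^{-k} \cdot \Pr[\text{demand point escapes scale } k]$. (ii) For a fixed subcube $Q$ at scale $k$, the number of demand arrivals that land in $Q$ over the horizon and the number of supply units initially in $Q$ are binomials with means proportional to $\Vol(Q)\cdot N$ and $\Vol(Q)\cdot(N+M)$ respectively; a subcube becomes "bad" (depleted) only if demand exceeds supply there, which by a Chernoff/ballot-type estimate is exponentially unlikely once the mean count in $Q$ is large, and is governed by Donsker-type $\sqrt{\text{mean}}$ fluctuations when the mean is moderate. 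This is exactly where the $d$-dependence enters: for $d\ge 3$ the volumes shrink fast enough that the expected contribution $\sum_k 2^{-k}\Pr[\cdots]$ telescopes to $\Theta(N^{-1/d})$ and the nearest-neighbor scale dominates; for $d=2$ one loses the extra $\sqrt{\log N}$ from the union bound over the $\Theta(2^{2k})$ subcubes at each scale (and from the $d=2$ boundary case where means are $\Theta(1)$); for $d=1$ the fluctuations in demand-minus-supply counts accumulate as a random walk and produce the $\Theta(1/\sqrt N)$ cost, with $M>\sqrt N$ excess supply providing the drift that suppresses depletion down to scale $1/M$. (iii) Handle feasibility: one must ensure the algorithm never gets stuck, i.e., some free supply unit always remains globally; since $M\ge 0$ and exactly $N$ demand units arrive against $N+M$ supply, feasibility is automatic, but one should check the greedy descent rule is well-defined (fall back to the whole cube if needed). (iv) Assemble the three cases into the three displayed bounds, matching Theorem~\ref{thm:static-matching}.

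The main obstacle I anticipate is the $d=1$ analysis, and in particular getting the excess-supply threshold at exactly $M=\Theta(\sqrt N)$ with the right cost $\Theta(1/M)$ for $M>\sqrt N$. The difficulty is that the semi-dynamic $d=1$ problem has genuine sequential structure: unlike static matching, one cannot simply sort and pair by rank, because future demand is unknown, so one must argue that Hierarchical Greedy's scale-by-scale depletion, when fed a drift of $M/N$ extra supply per unit length, behaves like a random walk reflected/absorbed in a way that keeps subcubes of scale $\gtrsim 1/M$ non-depleted with high enough probability that the cost sum $\sum_k 2^{-k}\Pr[\text{escape }k]$ is $\Theta(1/M)$. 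Controlling the correlations across scales in the hierarchy (a subcube's depletion depends on its parent's history) and converting the worst-case-looking greedy behavior into the Donsker-order bound is the technically delicate part; for $d\ge 2$ the exponential concentration makes the union bounds cheap and the argument is comparatively routine. I would also flag that the $d=2$, $M\in(N^{1-\epsilon},\epsilon N)$ gap noted for the static theorem persists here for the same reason, so the statement's "verbatim" claim inherits that caveat.
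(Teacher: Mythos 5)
Your approach matches the paper's in broad strokes (inherit the lower bounds from the static problem, use a dyadic Hierarchical Greedy and sum costs scale by scale), and your high-level picture for $d=1$ and $d\ge 3$ is the right one. There is, however, a genuine gap in the $d=2$, $M\le N^{1-\epsilon}$ case. You suggest that Hierarchical Greedy ``loses the extra $\sqrt{\log N}$ from the union bound over the $\Theta(2^{2k})$ subcubes at each scale,'' implying the algorithm attains the static bound $\Theta(\sqrt{\log N}/\sqrt N)$. It does not. For $d=2$ each of the $\Theta(\log N)$ scales contributes $\Theta(1/\sqrt N)$ in \emph{expectation} to the per-match cost (the contributions do not decay across scales as they do for $d\ge 3$), so the Hierarchical Greedy upper bound is $\Theta(\log N/\sqrt N)$ --- a full $\sqrt{\log N}$ factor above the static lower bound. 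Closing this gap requires exploiting cancellation between the discrepancies at different scales, which a scale-by-scale greedy simply cannot do. The paper handles this case by switching to a different algorithm, an online implementation of gravitational matching (Holden--Peres--Zhai), and only there obtains the matching $\Theta(\sqrt{\log N}/\sqrt N)$ upper bound. As written, your proposal would leave the $d=2$, small-$M$ upper bound a $\sqrt{\log N}$ factor too large.

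A smaller point: you flag ``correlations across scales (a subcube's depletion depends on its parent's history)'' as the technical crux for $d=1$. The paper sidesteps this cleanly by observing that with $\gamma_l=0$, the number of demand units matched at level $l$ in a cube $h$ is at most the number of demand arrivals to the parent of $h$ \emph{after} a child of the parent first runs out, and both the depletion time and the subsequent arrival count depend only on the exogenous i.i.d. arrival process (and initial supply), not on any matching decisions. This decouples the scales and lets one sum clean binomial-tail bounds; the Donsker-order fluctuations you worry about are exactly what those tail bounds capture, so this step is less delicate than you anticipate once the right conditioning is chosen.
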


The theorem says that the scaling behavior of the minimum cost achievable in each case is entirely unaffected by uncertainty about the locations of future demands, i.e., one can do nearly as well in the semi-dynamic model as in the static model.
The algorithm leading to our upper bounds in each case is the Hierarchical Greedy algorithm which we provide in Section~\ref{subsec:HG}; with the exception of the $d=2$ and $M \leq N^{1-\epsilon}$ where we use a result of \cite{holden2021gravitational} to show achievability. Theorem~\ref{thm:semi-dynamic} is proved in Section~\ref{subsec:proof-SD-achievability}. Our analysis illuminates how and why excess supply helps for $d=1$ (by mitigating the obstacle to matching at the larger length scales) but does not help for larger dimensions $d\geq 2$ (because the nearest-neighbor-distance dominates the cost anyway). 
See Figure~\ref{fig:SD-and-static-thms} for a visual representation of Theorem~\ref{thm:semi-dynamic} (and Theorem~\ref{thm:static-matching}).

\begin{figure}[tb]
  \centering
  \includegraphics[width=0.64\textwidth]{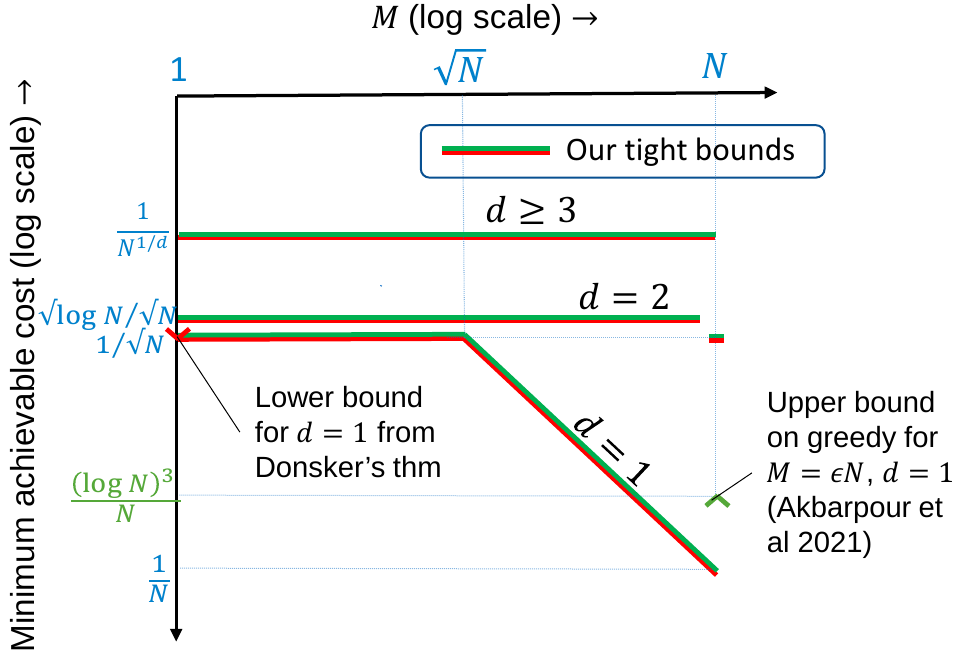}
  \caption{Minimum achievable cost versus $M$ for different $d$, from Theorem~\ref{thm:semi-dynamic} (and Theorem~\ref{thm:static-matching})}\label{fig:SD-and-static-thms}
\end{figure}

Compare our tight bounds for all $d$ and all $M$ with the analysis of \cite{akbarpour2021value} which studies the special case $d=1$ and $M = \Theta(N)$ and establishes a  upper bound of $O(\log^3 N/N)$ on the average matching cost under the standard greedy algorithm (our results imply that the minimum cost in this case is in fact $\Theta(1/N)$). 


\section{Fully dynamic model} 
\label{sec:fully-dynamic}

Finally, we consider a model where both supply and demand units arrive over time. This is the most technically challenging setting. We will adopt the simplest model which captures the challenge of spatial heterogeneity, namely, the tension between achieving a small matching cost \emph{now}, and maintaining a supply distribution (state) which is well spread out and hence will facilitate cheap matching in \emph{future}. Demand units will need to be matched immediately upon arrival. 
Our model will be set up so that there are exactly $m$ supply units in the system at any time. (While our analysis can be extended to a setting where supply and demand arrivals follow independent Poisson processes, with supply units arriving slightly faster and abandoning at a certain rate, and a cost for making demand wait, such a formulation adds unilluminating technical complexity so we avoid it.) We will formulate a discrete time  average cost minimization problem over a finite (or infinite) horizon, and ask how the average matching cost per pair scales with $m$ as a function of the dimension $d$.

\smallskip
{\bf Model.}
Initially there are $m$ supply units at arbitrary locations in $\cC=[0,1]^d$. At each period $t=1, 2, \dots, N$, the following sequence of events occurs (instantaneously):
\begin{itemize}
  \item A demand unit arrives at a uniformly random location in $\cC$ (independent of the past) and requires to be matched immediately.
  \item The system operator chooses one of the $m$ supply units in the system to match the demand unit to. The matched pair leaves and the period cost is equal to the Euclidean distance between them. 
  \item A supply unit arrives at a uniformly random location (independent of the past). This restores the number of supply units present in the system to $m$.
\end{itemize}

The goal is to minimize the expected average cost per period over some horizon $N$. Note that this model has just one key model primitive $m$, and as before, the question will be how the minimum achievable cost scales with $m$ (for large enough $N$). 

The following lower bound on the average cost per period is immediate from considering the expected distance between the next demand arrival and the closest supply unit. (This lower bound applies even if the system operator is permitted to costlessly reposition the $m$ supply units arbitrarily prior to each time period.) The proof is in the appendix.
\begin{proposition}[Nearest-neighbor-distance lower bound]
\label{prop:closest-nhbr-lower-bound}
For each $d \geq 1$ there exists $\epsilon = \epsilon(d)> 0$ such that for all $m \geq 2$, the expected match cost in each period for any policy is at least $\epsilon/m^{1/d}$. This holds for arbitrary initial locations of supply units and any horizon $N \geq 1$.
\end{proposition}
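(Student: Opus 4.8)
The plan is to lower-bound the expected match cost in a single period by the expected distance from a fresh uniform demand point to the nearest of the $m$ supply points currently in the system, and then to show this nearest-neighbor distance is $\Omega(m^{-1/d})$ \emph{uniformly} over all configurations of the $m$ supply points. The key observation is that the match cost in period $t$ is at least $\min_{i} \|X_t - s_i\|$, where $X_t$ is the uniform demand arrival and $s_1,\dots,s_m$ are the supply locations at the start of period $t$; this holds for every policy since the operator must match $X_t$ to one of these $m$ points. Since the bound must hold for arbitrary initial locations and any horizon, it suffices to prove: for \emph{any} fixed points $s_1,\dots,s_m \in \cC$, $\Ex_X[\min_i \|X - s_i\|] \geq \epsilon(d)/m^{1/d}$ where $X$ is uniform on $\cC=[0,1]^d$.

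The main step is this deterministic-configuration geometric fact, which I would prove by a volume/ball-covering argument. For any radius $r>0$, the event $\{\min_i \|X-s_i\| \le r\}$ is contained in $\bigcup_i B(s_i,r)$, which has volume at most $m \cdot V_d r^d$ where $V_d$ is the volume of the unit Euclidean ball. Hence $\Pr(\min_i\|X-s_i\| \le r) \le m V_d r^d$. Choosing $r = r_m := (2 m V_d)^{-1/d}$ makes this probability at most $1/2$, so $\min_i \|X - s_i\| > r_m$ with probability at least $1/2$, and therefore $\Ex_X[\min_i\|X-s_i\|] \ge r_m/2 = \tfrac12 (2 m V_d)^{-1/d} =: \epsilon(d)/m^{1/d}$. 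Taking expectation over $X_t$ (and then over the history, which does not matter since the bound is pointwise in the configuration) gives the per-period bound for every policy, every horizon $N\ge 1$, and every initial configuration, as claimed. The parenthetical remark in the statement — that the bound survives even if the operator may costlessly reposition the $m$ supply units before each period — is automatic, since the argument bounds $\Ex_X[\min_i \|X - s_i\|]$ uniformly over \emph{all} placements of the $s_i$'s, repositioned or not.

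There is no real obstacle here; the only mild subtlety is the "edge effect" of the hypercube. A ball of radius $r_m$ centered at a supply point near the boundary of $\cC$ may stick out of $\cC$, so one could worry that the relevant volume is smaller than $V_d r_m^d$ — but that only \emph{strengthens} the upper bound $\Pr(\min_i\|X-s_i\|\le r) \le m V_d r^d$ (the union of balls intersected with $\cC$ has volume at most $m V_d r^d$ regardless), so the bound goes through verbatim. One should just be slightly careful to state the constant $\epsilon(d)$ explicitly, e.g. $\epsilon(d) = \tfrac12 (2V_d)^{-1/d}$, and to note $m\ge 2$ is only needed so that $m^{1/d}$ and the statement's phrasing make sense; the inequality itself holds for all $m\ge 1$.
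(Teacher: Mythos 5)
Your argument is correct and is essentially identical to the paper's: both reduce to the pointwise claim that for any fixed configuration of $m$ supply points, a uniform demand point has expected distance $\Omega(m^{-1/d})$ to its nearest neighbor, proved by choosing a radius $r \sim m^{-1/d}$ so that the union of $m$ balls of radius $r$ has volume at most $1/2$, forcing the nearest-neighbor distance to exceed $r$ with probability at least $1/2$. Your version is slightly more explicit about the constant $\epsilon(d) = \tfrac{1}{2}(2V_d)^{-1/d}$ and about the boundary/edge effect only helping, but the mechanism is the same.
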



Our first result shows that the lower bound in Proposition~\ref{prop:closest-nhbr-lower-bound} is \emph{not} achievable for $d = 1$. In other words, we show a tighter lower bound for the one-dimension case. The proof is in the appendix.
\begin{theorem}[Tighter lower bound for $d=1$]
\label{thm:1d-impossibility}
For $d=1$, there exists a constant $\epsilon> 0$, such that for all $m \geq 2$ and any horizon $N \geq m^2$, the expected average cost per period for any policy is at least $\epsilon \log m/m$. This holds for arbitrary initial locations of supply units.
\end{theorem}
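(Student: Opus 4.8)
The plan is to exploit the analogy, hinted at in the text, between fully dynamic one-dimensional matching over a horizon of length $N$ and static two-dimensional matching of $\Theta(N)$ points, and then to invoke the Ajtai--Komlos--Tusnady lower bound \cite{ajtai1984optimal}. Concretely, I would view each demand arrival as a point $(x_t, t)$ in the rectangle $[0,1] \times \{1, \dots, N\}$, where $x_t \in [0,1]$ is its spatial location and $t$ is its arrival time; likewise each supply unit can be associated with the point $(y, s)$ where $y$ is its spatial location and $s$ is the time it entered the system (so supply units live on the ``time axis'' too). A matching policy assigns to each demand point $(x_t,t)$ a supply point $(y_s, s)$ with $s < t$, and the period cost is $|x_t - y_s|$. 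The key observation is that this period cost is at least a constant times the \emph{two-dimensional} distance $\|(x_t, t) - (y_s, s)\|$ divided by the ``aspect ratio'' of the relevant window — more precisely, if we rescale time by a factor $1/m$ (since roughly $m$ periods elapse between the arrival and departure of a typical supply unit in any reasonable policy), then a matched pair that is close in the rescaled picture must be close in space, and so a policy with small spatial cost yields a two-dimensional matching with small cost on the rescaled rectangle $[0,1] \times [0, N/m]$.

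Next I would chop the horizon into $\Theta(N/m)$ consecutive blocks of $\Theta(m^2)$ periods each (legitimate since $N \geq m^2$), and argue separately within each block. Within one block of length $\Theta(m^2)$, rescaling time by $1/m$ gives a rectangle of dimensions $1 \times \Theta(m)$; we expect $\Theta(m^2)$ demand points and a comparable number of distinct supply points used, i.e.\ roughly $n := \Theta(m^2)$ points in a region of area $\Theta(m)$, which is the same as $n$ points in a unit square after rescaling, so the AKT bound says the average two-dimensional matching distance (in the unit square scaling) is $\Omega(\sqrt{\log n / n}) = \Omega(\sqrt{\log m}/m)$. Undoing the rescalings, the spatial component of the cost picks up the aspect-ratio factor and one recovers an average spatial cost of $\Omega(\log m / m)$ per period within the block. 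The one subtlety is that the matching here is constrained (demand at time $t$ can only use supply with $s < t$), but removing the temporal constraint only \emph{lowers} the achievable cost, so the unconstrained-two-dimensional AKT lower bound still applies to the projected problem; also one must rule out the policy cheating by reusing a tiny cluster of supply points, but any supply point can be matched at most once, so the number of distinct supply points used in a block equals the number of demand arrivals, which is exactly $\Theta(m^2)$.

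The main obstacle, I expect, is making the ``time behaves like a spatial dimension'' reduction fully rigorous, in particular pinning down the correct length scale for the temporal axis. The heuristic is that a supply unit survives $\Theta(m)$ periods on average, but a clever policy might keep certain supply units around much longer or match them much sooner; so the argument cannot simply assert that the effective rescaling factor is $1/m$. The clean way around this is probably to argue directly: fix any policy and any block; either (a) the policy incurs average spatial cost $\Omega(\log m / m)$ within the block and we are done, or (b) it incurs small average cost, in which case one shows that the set of (demand point, matched supply point) pairs, viewed in $[0,1]\times\mathbb{R}$ with time \emph{unscaled}, forms a matching of $\Theta(m^2)$ points in a $1 \times \Theta(m^2)$ rectangle with total cost $o(m \log m)$ in the time-direction-dominated metric, contradicting a version of the AKT lower bound for matchings in a long thin rectangle (which follows from AKT by tiling the rectangle with $\Theta(m)$ unit squares and summing the per-square lower bounds, each of which is $\Omega(\sqrt{m^2 \cdot \log m}\,/\,1) $ after accounting for the $\Theta(m)$ points per unit square). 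Packaging this tiling-plus-AKT estimate as a stand-alone lemma about thin rectangles, and then verifying that the temporal separation constraint and the restriction to ``supply points actually used'' do not weaken it, is where the real work lies; the rest is bookkeeping with constants.
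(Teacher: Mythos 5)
There is a genuine gap, and it sits at the center of your plan. You want to apply the AKT lower bound to the two-dimensional point cloud obtained by adjoining (rescaled) time as a second coordinate, relying on the claim that ``a policy with small spatial cost yields a two-dimensional matching with small cost on the rescaled rectangle.'' That implication is false: the two-dimensional cost of the policy's matching equals the spatial cost \emph{plus} the (rescaled) temporal cost, and the temporal cost is not small. Because exactly $m$ supply units sit in the system at every period, the total holding time $\sum_t (t - s(t))$ is deterministically $\Theta(mN)$ regardless of policy; after your $1/m$ time rescaling this contributes $\Theta(N)$ to the two-dimensional cost, i.e.\ $\Theta(1)$ per period, which dwarfs the $\Theta\big(\sqrt{\log m/m}\big)$ per-period AKT estimate for a $1\times(N/m)$ rectangle. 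So the AKT inequality for the two-dimensional Euclidean cost is satisfied by the temporal component alone and tells you nothing about the spatial component, which is the quantity the theorem is actually about. Projecting to two dimensions and invoking AKT with the ordinary Euclidean metric lower-bounds the wrong object, and no choice of rescaling factor, block length, or ``time-direction-dominated metric'' fixes this, because the $\Theta(mN)$ holding-time budget is forced and dominates.

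What the paper's proof does, and what your sketch is missing, is a device that isolates the \emph{vertical} (spatial) distance while replacing the temporal cost entirely by the structural \emph{rightward-matching constraint} (supply must arrive strictly before the demand it serves). Lemma~\ref{lemma:rightwardmatching-dual-function} gives a duality tailored to exactly this: the minimum total vertical distance over rightward matchings equals the supremum of the discrepancy $\sum_i f(Y_i) - f(X_i)$ over functions $f$ that are $1$-Lipschitz in space and \emph{non-increasing} in time. The AKT witness $\tilde f$ is $1$-Lipschitz but not monotone in time; setting $f(x,y) = \tilde f(x,y) - x$ repairs this (the subtraction forces $\partial_x f \le 0$ precisely because $\tilde f$ has Lipschitz constant $1$) at the cost of the correction $\sum_i\big(x(Y_i)-x(X_i)\big)$, which telescopes to exactly $m$ by a conservation argument on arrival times. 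Choosing $N_0 = Cm^2/\log m$ with $C$ large makes this correction a small fraction of the AKT discrepancy $\epsilon_1\sqrt{n\log n} = \Theta(\sqrt{C}\,m)$, and the bound follows. The decisive cancellation is that this \emph{signed} horizontal sum equals $m$; that is what replaces your uncontrolled $\sum_t |t-s(t)| = \Theta(mN)$. Without the duality lemma (or some equivalent mechanism to target the vertical cost under the rightward constraint), the tiling-plus-AKT estimate you sketch cannot close.
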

The proof proceeds by formally developing an  analogy with two-dimensional static random matching \cite{ajtai1984optimal}. In our analogy time acts as an additional spatial dimension. In particular, this connection assures us that any policy developed for the fully dynamic case implies a matching algorithm and corresponding match cost for two-dimensional static random matching. But, the result of Ajtai, Komlos and Tusnady \cite{ajtai1984optimal} tells us that two-dimensional static random matching incurs a expected match distance which is $\Omega (\sqrt{N \log N})$, as captured in our Theorem~\ref{thm:static-matching}. We deduce the lower bound in Theorem~\ref{thm:1d-impossibility}.

This negative result raises the question of whether the nearest-neighbor based lower bound is unachievable also for $d \geq 2$. How does the optimal average matching cost scale with $m$ for each $d$? We answer this question definitively for $d \geq 2$, and up to a $\log m$ factor for $d=1$.

\begin{theorem}[Achievability]
\label{thm:MG-performance}
For each $d \geq 2$, there is a constant $C_d < \infty$, such that for all $m \geq 2$, there is a matching policy which achieves an expected average cost per match of at most $C_d/m^{1/d}$, both in steady state and for any finite horizon $N \geq C_d m^{1.01+1/d}$. For $d =1$, there is a constant $C_1 < \infty$, such that for all $m \geq 2$, there is a matching policy which achieves an expected average cost per match of at most $C_1(\log m)^2/m$, both in steady state and for any finite horizon $N \geq C_1 m^3$.
For each $d\geq 1$, if, before period 1, the $m$ supply units are at evenly spread locations,\footnote{As an example of ``evenly spread locations'', the $m$ supply units can be located at the lattice points of a grid $\{(i\epsilon, j\epsilon): i,j \in \{0, 1, \dots, m^{1/d}-1 \}, \epsilon = 1/(m^{1/d}-1)\}$.} the aforementioned upper bounds on the expected cost hold for arbitrary finite horizon $N \geq 1$.
\end{theorem}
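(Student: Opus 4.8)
The plan is to instantiate the Hierarchical Greedy (HG) algorithm of Section~\ref{subsec:HG} and bound its expected cost by splitting the matching cost across a dyadic hierarchy of length scales. Fix the standard dyadic partition of $\cC=[0,1]^d$: at scale $\ell$ there are $2^{\ell d}$ subcubes of side $2^{-\ell}$, and the ``nominal'' (fair-share) supply of a scale-$\ell$ subcube is $m2^{-\ell d}$; let $L=\Theta\big((\log m)/d\big)$ be the finest scale, at which subcubes have $\Theta(1)$ nominal supply. HG matches each arriving demand, whenever possible, within the finest dyadic subcube that contains it and still holds available supply; if that subcube is ``stocked out'' the demand is served from a coarser subcube (an \emph{escalation}). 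If a demand is served at scale $\ell$ then the matched pair lies in a common subcube of side $2^{-\ell}$, so the match cost is at most $\sqrt d\,2^{-\ell}$. Writing $q_\ell$ for the (steady-state) probability that a fresh demand's scale-$(\ell+1)$ subcube is stocked out, the probability of being served at scale $\le\ell$ is at most $q_\ell$, so the expected cost per match is at most $\sqrt d\sum_{\ell=0}^{L} 2^{-\ell} q_\ell$ (with always $q_\ell\le1$, and $q_L=1$).

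The heart of the proof is the per-scale estimate, up to polylog factors in $m$,
\begin{equation}
q_\ell \;\lesssim\; \min\!\big(1,\; 2^{\ell d}/m\big).
\label{eq:qbound-sketch}
\end{equation}
To see where \eqref{eq:qbound-sketch} comes from, fix a scale-$k$ subcube $Q$ with volume $v=2^{-kd}$. Over a window of $t$ periods, $Q$ receives $\mathrm{Bin}(t,v)$ supply arrivals and $\mathrm{Bin}(t,v)$ demand arrivals, so its net direct in-flow is a mean-zero random walk with step-variance $\asymp v$; HG keeps $Q$'s stock near its nominal value $mv$, so $Q$ becomes stocked out only after this walk has drifted down by $\asymp mv$, which takes $\asymp (mv)^2/v=m^2v$ periods, and an escalation episode then lasts $\asymp m$ periods (the time for $\asymp mv$ fresh arrivals to accumulate at rate $v$) before $Q$ is restored to nominal. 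Thus $Q$ is stocked out an $\asymp m/(m^2v)=1/(mv)=2^{kd}/m$ fraction of the time, which (averaging over the uniform location of a fresh demand, together with the trivial bound $q_\ell\le1$) gives \eqref{eq:qbound-sketch} with $k=\ell+1$. Plugging \eqref{eq:qbound-sketch} into the cost bound, the crossover scale is $\ell^\star$ with $2^{\ell^\star d}=m$, i.e.\ $2^{-\ell^\star}=m^{-1/d}$ and $\ell^\star\asymp L$: for $d\ge2$ the summand $2^{-\ell}\min(1,2^{\ell d}/m)$ is geometric on both sides of $\ell^\star$ and is dominated by $\ell\approx\ell^\star$, giving cost $O(m^{-1/d})$, the nearest-neighbor scale matching Proposition~\ref{prop:closest-nhbr-lower-bound}; for $d=1$ the summand equals $\Theta(1/m)$ for every $\ell\le\ell^\star\asymp\log_2 m$, so the sum is $\Theta((\log m)/m)$, matching Theorem~\ref{thm:1d-impossibility}, and the extra $\log m$ in HG's bound $O((\log m)^2/m)$ comes from the polylog slack in \eqref{eq:qbound-sketch}.

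The main obstacle is making the random-walk heuristic for \eqref{eq:qbound-sketch} rigorous despite the coupling \emph{across} scales: $Q$'s stock is also drained in bursts whenever one of $Q$'s $2^d$ children escalates (pulling a batch from $Q$) and replenished in bursts by $Q$'s own escalations. One must show the coupled hierarchy of stock processes still has the claimed escalation frequencies --- e.g.\ via a Lyapunov/potential function summing a convex function of each subcube's deficit, or by stochastically dominating each subcube's stock by an independent simple random walk with a slightly inflated capacity threshold; this last route needs a union bound over the $\asymp\log m$ scales and a mild mean-reversion to ensure finite stockout return times, and is plausibly the source of the extra $\log m$ factor for $d=1$. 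The same analysis yields positive recurrence of the subcube-count process and hence existence of the steady state in which the above bounds are phrased.

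Finally, the finite-horizon and initial-configuration claims. From an \emph{arbitrary} initial configuration the $m$ initial units may be clumped, so early demands can cost $\Theta(1)$; but after $O(m\log m)$ periods all initial units have been matched away with high probability and fresh uniform arrivals have brought the hierarchy to near its nominal profile, after which the steady-state analysis applies. The total burn-in cost is $O(m\log m)$, which amortized over $N$ periods is $O(m^{-1/d})$ once $N\gtrsim m^{1+1/d+o(1)}$ (the $\log m$ being absorbed into the $0.01$ exponent); for $d=1$ one additionally needs $N$ long enough ($\gtrsim m^3$) for the coarse scales --- which, unlike for $d\ge2$, contribute at leading order --- to complete $\Omega(m)$ of their $\asymp m^2$-period escalation cycles so that the time-averaged cost concentrates. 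When the $m$ units start evenly spread, the initial configuration already matches the nominal profile at every scale, so there is no burn-in: the per-period expected cost is $O(m^{-1/d})$ (resp.\ $O((\log m)^2/m)$ for $d=1$) from period $1$, whence the bounds hold for every $N\ge1$.
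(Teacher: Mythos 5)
Your high-level architecture agrees with the paper's (a dyadic hierarchy, a per-scale cost decomposition $\sum_\ell 2^{-\ell} q_\ell$, a random-walk picture for each subcube's inventory), but the heart of the argument is missing, and you say so yourself: you flag ``making the random-walk heuristic rigorous despite the coupling across scales'' as the main obstacle and then only list candidate routes (Lyapunov functions, dominating by an independent walk with inflated capacity) without carrying any of them out. The paper resolves exactly this obstacle with a specific algorithmic device you do not mention: it runs Hierarchical Greedy with per-level \emph{minimum supply thresholds} $(\gamma_\ell)$, chosen as $\gamma_\ell = m2^{-(\ell_0-\ell)d} - \sum_{\ell'\ge\ell}\beta^{\ell'}2^{-d(\ell'-\ell)}$ with $\beta$ slightly above $2$ (and $\beta=2$ forced when $d=1$). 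The algorithm never draws supply from a subcube whose count is at or below its $\gamma$-floor (Lemma~\ref{lem:no-match-below-gamma}) and never drains a subcube below $\eta_\ell=\gamma_{\ell+1}2^{-d}$ to serve outside demand (Lemma~\ref{lem:no-outside-match-below-eta}). These invariants give a \emph{deterministic, pathwise} lower bound on each $n_h(t)$ by a lazy doubly reflected simple random walk $W_h$ on $\{\lrb,\dots,\urb\}$ (Lemma~\ref{lem:Wh-is-a-LB}), whose stationary law is near-uniform so that $\Pr(W_h=\lrb)\lesssim 1/(\urb-\lrb+1)\asymp\beta^{-\ell}$ (Lemma~\ref{lem:RW-time-at-lower-boundary}). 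The ``coupling across scales'' you worry about simply never enters: the thresholds decouple the levels pathwise, rather than probabilistically. Without $\gamma_\ell$ — i.e.\ with the vanilla HG of the semi-dynamic section, which is what you actually describe — the domination you need is not available and your per-scale estimate $q_\ell\lesssim 2^{\ell d}/m$ remains a heuristic; the drift/episode-length argument you sketch for it (``$Q$ becomes stocked out only after a mean-zero walk of step variance $\asymp v$ drifts down by $\asymp mv$'') is not something you derive from any invariant of the unthresholded algorithm.

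A few secondary discrepancies worth noting. The paper's per-scale escalation bound is $\lesssim\beta^{-\ell'}$ with $\beta\in(2,\tfrac34 2^d)$ (paper's $\ell'$ indexed from the finest level), not $2^{\ell d}/m\asymp 2^{(\ell-L)d}$ in your notation; both give $O(m^{-1/d})$ for $d\ge2$, but the forms do not match and the paper's comes from the reflected-walk stationary law, not from your time-fraction argument. For $d=1$ the paper does not set $2^{\ell_0}\asymp m$: it needs $2^{\ell_0}\le m/(1+\log_2 m)$, i.e.\ $\Theta(\log m)$ nominal supply per leaf, precisely so that the $\gamma_\ell$'s remain nonnegative when $\beta=2$, and this is one of the two sources of the $(\log m)^2$ (the other being the $\ell_0\asymp\log m$ equal per-level contributions). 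Your attribution of the extra $\log m$ to ``polylog slack in the estimate'' is not what happens. Finally, your burn-in discussion is qualitatively on target ($\Ex[T]=O(m\log m)$, Lemma~\ref{lem:bound-on-Tl}, and $W_h$ starting at $\urb$ when supply is evenly spread so the steady-state bound applies from $t=1$), but you cannot actually deliver the finite-horizon constants $m^{1.01+1/d}$ and $m^3$ because they depend on the mixing time of the reflected walks at the \emph{coarse} levels, $\hat T_h - T_h \asymp 2^{(\ell_0-\ell)d}(\urb-\lrb+1)^2\log m$, which is again controlled by the $\gamma_\ell$/$\eta_\ell$ structure ($\urb-\lrb+1\asymp\beta^\ell$) that your proposal lacks.
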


To establish Theorem~\ref{thm:MG-performance}, we use a more sophisticated version of our Hierarchical Greedy algorithm (Section~\ref{subsec:HG}) and analyze its performance (Section~\ref{subsec:proof-FD-achievability}). 

We are pleased to have obtained these sharp results in what appeared to be a challenging  setting to analyze. This is an average cost minimization problem where the state of the system (the locations of $m$ supply units) belongs to a high-dimensional continuum $\cC^m = [0,1]^{dm}$ with $d$-coordinates per supply unit (of course there is invariance under permutations of supply units) and an underlying metric (here, the Euclidean distance). Discretizing $\cC$ nevertheless requires us to consider $\Omega(m)$ discrete locations arranged in a $d$-dimensional grid in order to study distances of order $1/m^{1/d}$, and it is unclear how to solve a control problem of match-distance-minimization in such a state space. The fully dynamic setting appears substantially more challenging than the semi-dynamic setting studied earlier, and indeed, to obtain our achievability results we need a more involved version of Hierarchical Greedy with carefully chosen lower bound constraints on the supply maintained at each ``level'' in the hierarchy. Recall that a sharp analysis of the standard greedy algorithm is challenging even in the semi-dynamic setting, and analyzing greedy in the fully dynamic setting may be even harder. 


We use Theorem~\ref{thm:MG-performance} to derive capacity planning prescriptions for shared mobility systems like ridehailing in Section~\ref{sec:capacity-planning}.

\section{Matching algorithm and proofs of achievability results}
\label{sec:analysis}

In Section~\ref{subsec:HG} we provide our Hierarchical Greedy algorithm for dynamic matching which underlies all our achievability results except for the semi-dynamic model with $d=2$. In Section~\ref{subsec:proof-SD-achievability} we show achievability in the semi-dynamic model (Theorem~\ref{thm:semi-dynamic}). In Section~\ref{subsec:proof-FD-achievability}, we show achievability in the fully dynamic model (Theorem~\ref{thm:MG-performance}).

\subsection{The Hierarchical Greedy algorithm}
\label{subsec:HG}
Fix dimension $d$. We partition the hypercube $\cC=[0,1]^d$ into a sequence of successively refined partitions $\cH_{\ell_0} \triangleq \{\cC\}, \cH_{\ell_0-1}, \dots, \cH_0$, where the non-negative integer $\ell_0$ will be chosen later.
The partition $\cH_{\ell_0-1}$ is obtained by dividing $\cC$ into $2^d$ smaller \emph{child} hypercubes by cutting $\cC$ midway along each dimension. For example for $d=2$, the four children of $\cC$ are $\cH_{\ell_0-1} = \{ [i/2,(i+1)/2]\times[j/2,(j+1)/2] \; \forall \; (i, j) \in \{0,1\}^2\}$. (The points on the boundary between the child hypercubes can be included in any one of the adjacent hypercubes; the boundary set plays no role because it has Lebesgue measure zero.) We call the set $\cH_{\ell_0-1}$ of hypercubes produced the \emph{level-$(l_0-1)$ partition}.  We repeat this process of subdivision iteratively to produce successively refined partitions of $\cC$, subdividing each hypercube in $\cH_k$ into \emph{child} hypercubes by cutting it midway along each dimension to produce a refined partition $\cH_{k-1}$ for $k = \ell_0,\ell_0-1, \dots,1$. Figure~\ref{fig:HG-partitions} provides an illustration for the case $d=2$. Our choice of $\ell_0$ for the fully dynamic model will lead to the following number of \emph{leaf} hypercubes in the finest partition $\cH_{0}$:
\begin{align}
  2^{d \ell_0} = \left \{
  \begin{array}{ll}
  \Theta(m) & \textup{for } d \geq 2\\
  \Theta(m/\log m) & \textup{for } d=1
  \end{array} \, .\right .
\end{align}
Later, for the semi-dynamic model we will use $\ell_0$ such that $2^{d\ell_0} = \Theta(N)$ for all $d \geq 1$.

\begin{figure}[tb]
  \centering
  \includegraphics[width=0.5\textwidth]{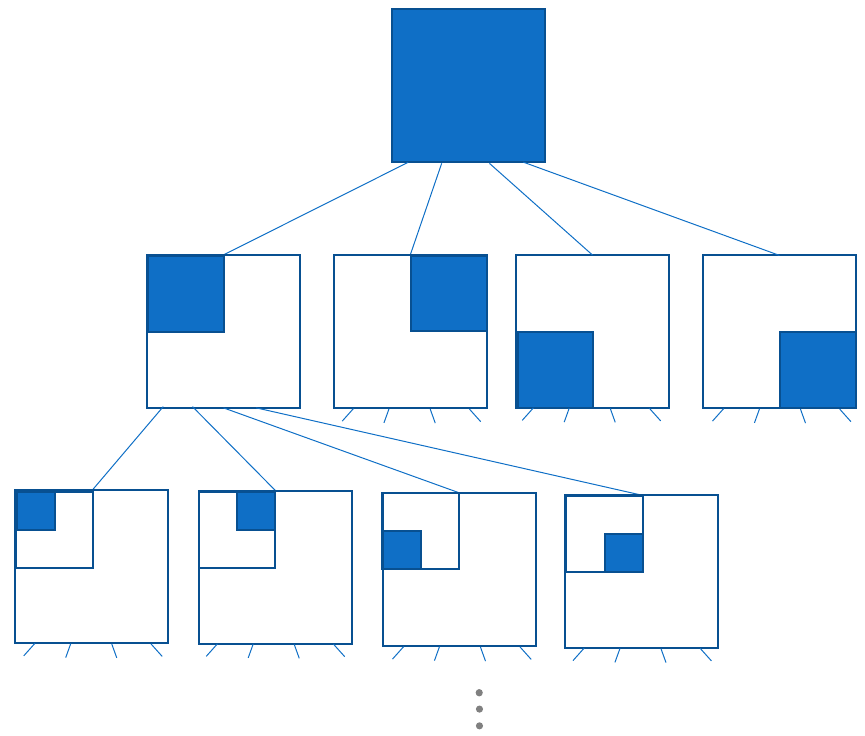}
  \put(-250,153){$\cH_{\ell_0}$}
   \put(-250,93){$\cH_{\ell_0-1}$}
    \put(-250,37){$\cH_{\ell_0-2}$}
  \caption{Hypercube partitions for Hierarchical Greedy for $d=2$}\label{fig:HG-partitions}
\end{figure}

Note that the set $\cH_k$ of hypercubes at level $k \in \{0, 1, 2, \dots, \ell_0 \}$ constitutes a partition of $\cC$ into $2^{(\ell_0-k) d}$ hypercubes, each of side-length $2^{-(\ell_0-k)}$  and volume $2^{-(\ell_0-k) d}$.
Also note the hypercubes defined by the above refinement process are organized (by definition) in a $2^d$-ary tree with height $\ell_0$. The root of the tree (whose height in the tree is $\ell_0$) is the original hypercube $\cC$, the nodes at height $\ell$ in the tree are the hypercubes in the level-$\ell$ partition $\cH_\ell$, and our notion of child hypercube uniquely specifies the child nodes of each node in the tree. Having established this tree organization, we will find it pedagogically useful to also deploy the standard terminology of \emph{parent}, \emph{ancestor} and \emph{descendant} (hypercube) in addition to the notion of child (hypercube).

\begin{algorithm}[tb]
\caption{Hierarchical Greedy (HG) for dynamic spatial matching}
\label{alg:HG}

\begin{flushleft}
INPUTS:\\
The number of dimensions $d$ of the unit cube $\cC=[0,1]^d$.\\
The number of initial supply units and their locations in $\cC$.\\
The number of levels of partition refinements $\ell_0+1$\\
The minimum supply to be maintained for hypercubes at each level $(\gamma_k)_{k=0}^{\ell_0}\,$, which must satisfy ${ \gamma_{\ell_0}\in\big[\, 0\,,\, \textup{(\# supply units)}\,\big)}$ as well as
\begin{align}
\gamma_{k}  \in \big [0, 2^{-d} \gamma_{k+1} \big] \qquad \qquad \forall k = 0, 1, \dots, \ell_0-1 \, .
\label{eq:gamma_k-LB}
\end{align}
In each period: The location of the demand arrival and then, if this is the fully dynamic model, the location of the supply arrival.
\smallskip

OUTPUT:\\
In each period: The existing supply unit to match the arrived demand with.
\end{flushleft}
\smallskip

\begin{algorithmic}[1]
\State Compute the (initial) number of supply units $n_h$ for all hypercubes $h$ in $\cH_0 \cup \cH_1 \cup \dots \cup \cH_{l_0}$
\For{period $t \geq 1$}
    \State $n_h(t) \gets n_h \ \ \forall h \in \cH_0 \cup \cH_1 \cup \dots \cup \cH_{l_0}$ \Comment{$n_h(t)$ denotes supply unit count at the beginning of period $t$}
    \State A demand unit arrives to some leaf hypercube $\hd = \hd(t) \in \cH_0$
    \State $\cS \gets \{k : 0 \leq k \leq \ell_0, n_{A_k(\hd)} \leq \gamma_{k} \}$ \Comment{The set of undersupplied ancestors of $h$}
    \If {$\cS = \emptyset$} \Comment{Determine the level $\ell$ at which the demand will be matched.}
        \State $\ell \gets 0$
    \Else
        \State $\ell \gets 1+ \max\{k: k \in \cS \}$ \Comment{Preserve supply of all undersupplied ancestors}
    \EndIf
    \State $h' \gets A_{\ell}(\hd)$ \Comment{Start from the level $\ell$ ancestor of $h$}
    \For{$k \gets \ell$ to $1$} \Comment{Iteratively pick the best-supplied child of $h'$, to reach a leaf}
        \State $h' \gets \arg \max \{n_h(t): h $ is a child of $h' \}$
    \EndFor
    \State Match the demand unit to an arbitrary supply unit in the leaf $h'$
    \State $n_{h'} \gets n_{h'} - 1$ at $h'$ and all its ancestors
    \If {fully dynamic setting}
        \State A supply unit arrives to some leaf hypercube $\hs = \hs(t) \in \cH_0$
        \State $n_{h} \gets n_{h} + 1$ at all $h \in \{A_k(\hs): 0 \leq k \leq \ell_0 \}$ \Comment{Increment $n_h$ at $\hs$ and all its ancestors}
    \EndIf
\EndFor

\end{algorithmic}
\end{algorithm}

%

Our Hierarchical Greedy (HG) will use the finest partition $\cH_{0}$ as a discretization of $\cC$, i.e., it will not distinguish between different supply units in the same leaf hypercube. We say that a demand unit has arrived to a leaf hypercube $\hd \in \cH_0$ if the location of that demand unit belongs to $\hd$, and similarly for supply units.  We say that a demand unit is matched at level $\ell$ if
\begin{align}
  \ell = \min\{ k : \exists h \in \cH_k \textup{ s.t. the demand unit and its matched supply unit are both in } h \} \, ,
  \label{eq:match_at_level_l}
\end{align}
i.e., the supply unit it is matched to belongs to the same hypercube at level $\ell$, but this is not true for any $\ell' < \ell$. Denote the number of supply units in each hypercube $h \in \cH_k$ prior to period $t$ by $n_h(t)$. For a leaf $h \in \cH_0$, denote by $A_k(h)$ the level $k$ ancestor of $h$, including $A_0(h) = h$. Hierarchical Greedy is specified in Algorithm~\ref{alg:HG}. 

In the semi-dynamic model, we will deploy Hierarchical greedy with no minimum supply requirements $\gamma_k = 0$ for all $0 \leq k\leq \ell_0$, and so $\ell$ will simply be the lowest level ancestor which has at least one supply unit. (There is always such a level as long as $M \geq 0$ since the number of supply units $N+M$ at the beginning exceeds the number of periods $N$.) Moreover, our analysis will not depend on which leaf descendant of that ancestor the supply unit is taken from. In contrast, in the more challenging fully dynamic model, both the $\gamma_k$s as well as the leaf to take supply from will need to be carefully chosen.

\begin{lemma}[Correctness of Hierarchical Greedy in the fully dynamic model]
We always have $\ell_0 \notin \cS$, where $\cS$ in defined in line 5 of Hierarchical Greedy. Hence $\ell \leq \ell_0$ in line 9 of Hierarchical Greedy. Furthermore, the ancestor  at level $\ell$ of the demand unit $A_{\ell}(\hd)$ has at least one supply unit available, and hence so does the leaf $h'$ invoked in line 15 of Hierarchical. 
\end{lemma}
\begin{proof}
The fact that the root hypercube is never undersupplied $\ell_0 \notin \cS$ is immediate from the requirement $\gamma_{\ell_0} < \textup{\# supply units}$, the definition of $\cS$, and the fact that the total number of supply units in the system remains the same over time. As a result $\max\{k: k \in \cS \} \leq  \ell_0-1$ and hence $\ell = 1+ \max\{k: k \in \cS \} \leq \ell_0$ in line 9 of HG.

Since $\gamma_k \geq 0$ for all levels $k$, we infer that $A_{\ell}(\hd)$ does have at least one supply unit, since it is not undersupplied, by definition of $\cS$ and $\ell$. 
Now, in lines 12--14, the algorithm iteratively picks the best supplied child. As a result, each $h'$ selected in line 13 has at least one supply unit. In particular, the leaf $h’$ invoked in line 15 also has at least one supply unit. 
\end{proof}

The idea of the HG algorithm for the fully dynamic model is twofold:
\begin{itemize}
  \item (Lines 5--10) We match each demand unit at the lowest level -- i.e., the smallest $k$ -- possible, subject to never using a supply unit from any hypercube $h \in \cH_k$ which currently has $\gamma_k$ or fewer supply units, for any $k \leq \ell_0$. The minimum targets $\gamma_k$ will be carefully chosen to balance the risk of supply depletion at different levels.
  \item (Lines 11--15) When matching at level $k>0$ there is a choice between the leaf nodes which share the same level-$k$ ancestor. We make this choice by -- starting with the ancestor at level $k$ -- iteratively picking the child hypercube which currently has the largest $n_h$. 
\end{itemize}


\subsection{Semi-dynamic model: Proof of Theorem~\ref{thm:semi-dynamic}}
\label{subsec:proof-SD-achievability}

In the semi-dynamic model, we use the Hierarchical Greedy algorithm with $\gamma_k=0$ for all $k$, i.e., we simply match each arriving demand unit to its lowest-level ancestor hypercube which has non-zero supply units remaining.\footnote{Note that, here, the set $S$ takes the form $S=\{ 1, 2, \dots, \ell\}$ for some $\ell$, since if some ancestor of the leaf where demand just arrived has non-zero supply units, all higher levels ancestors also have non-zero supply units.} Recall that the initial number of supply units is $N+M$ and these are depleted over time as the $N$ demand units arrive sequentially.

\subsubsection{Intuition: Obstacles to matching at different length scales}
\label{subsec:intuition-obstacles}

We start by providing some intuition regarding the obstacles to matching at each length scale, and why Hierarchical Greedy with $\gamma_k=0$ for all $k$ is a near optimal algorithm for the semi-dynamic model in light of these obstacles. The phrase ``obstacle to matching at length scale $L$'' refers to a lower bound on matching costs which holds due to discrepancy between demand and supply at length scale $L$. The development in this subsubsection is informal and may be skipped by the reader who prefers rigor. Our proof in the next subsubsection reveals clearly the formal manifestation of the obstacles to matching.

Consider random static matching in dimension $d$ with no excess supply $M=0$, and a sub-hypercube $h$ of the unit hypercube $\cC$ of side length\footnote{Here $1/N^{1/d}$ is the scaling of the nearest-neighbor-distance; we impose $L \geq 1/N^{1/d}$ to ensure that $h$ contains $\Omega(1)$ demand (supply) units in expectation.} $L \in [1/N^{1/d}, 1/2]$. The sub-hypercube has volume $L^d$ and ``surface area'' $\Theta(L^{d-1})$. 
The realized number of demand (supply) units in the hypercube is a Binomial($N, \textup{Volume}(h)$) random variable, and hence has mean $NL^d$ and standard deviation $\Theta(\sqrt{N L^d})$. As a result the expected discrepancy between the number of supply units and the number of demand units in the hypercube is $\Theta(\sqrt{N L^d})$. Suppose, for instance, the realized number of supply units is smaller than the number of demand units. As a result, one would need to match some of demand units in the hypercube to supply units outside the sub-hypercube $h$. How far outside $h$? If one considers points within distance $\Delta$ from $h$, this incorporates a region outside $h$ of volume approximately the surface area times $\Delta$, i.e., $\Theta(\Delta L^{d-1})$. For this outside region to include $\Theta(\sqrt{N L^d})$ additional supply units, we need $N \cdot \Delta L^{d-1}  \sim \sqrt{N L^d} \Rightarrow \Delta \sim L^{1-d/2}/\sqrt{N}$. With a little bit of work, one can show a lower bound of this order $\Delta_L \sim L^{1-d/2}/\sqrt{N}$ on the expected average matching distance achievable for all $L \in [1/N^{1/d}, 1/2]$. (We formalize this lower bound for $d=1$ in the proof of Theorem~\ref{thm:static-matching}.) We observe that for $d=1$, $\Delta_L$ is monotone increasing in $L$ and hence the largest obstacle occurs at the ``macro scale'' $L\sim 1$, for $d=2$, the obstacles at all scales $L \in [1/\sqrt{N}, 1/2]$ have the same scaling with $N$, and for $d\geq 3$, $\Delta_L$ is monotone decreasing in $L$ and hence the largest obstacle to matching occurs at the ``micro scale'' corresponding to the nearest-neighbor-distance, i.e., $L \sim 1/N^{1/d}$. See Figure~\ref{subfig:obstacles-M=0} for a visual representation of $\Delta_L$ versus $L$ for different $d$.

\begin{figure}[tb]
     \centering
     \begin{subfigure}[b]{0.48\textwidth}
         \centering
         \includegraphics[width=\textwidth]{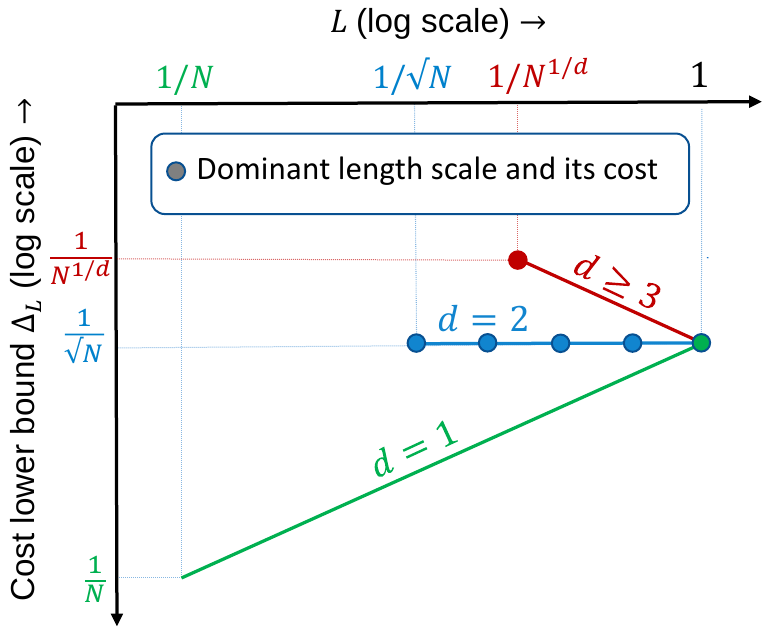}
         \caption{$M=0$}
         \label{subfig:obstacles-M=0}
     \end{subfigure}
         \hfill
     \begin{subfigure}[b]{0.48\textwidth}
         \centering
         \includegraphics[width=\textwidth]{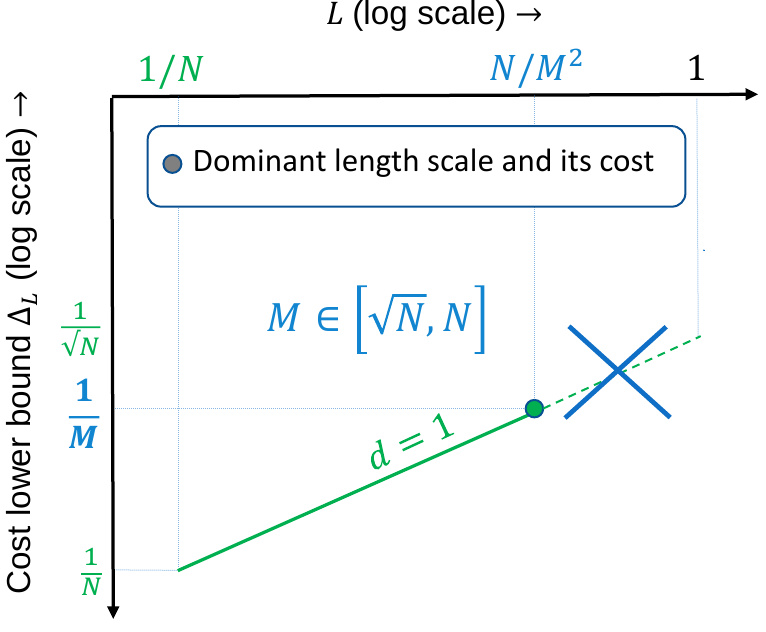}
         \caption{$d=1$ and $M \gtrsim \sqrt{N}$}
         \label{subfig:obstacles-d=1-Mlarge}
     \end{subfigure}
        \caption{Obstacles to matching at different length scales}
        \label{fig:obstacles}
\end{figure}

How does excess supply $M > 0$ modify the above heuristic picture? 
With excess supply $M$, the density of supply units is now $N+M$. The expected excess supply in the sub-hypercube $h$ grows linearly in the volume as $M \times \textup{Volume}(h)$ whereas the stochastic fluctuation in the total demand in $h$ grows only as the square root of the volume as $\sim \sqrt{N \times \textup{Volume}(h)}$.  Comparing the two aforementioned quantities, we deduce that for longer length scales $L \geq L_* = (N/M^2)^{1/d}$, the expected excess supply is larger than the stochastic variability $M \times \textup{Volume}(h) \geq \sqrt{N \times \textup{Volume}(h)}$ so one typically does not need to match demand in $h$ to supply outside $h$, i.e., \emph{excess supply (nearly) eliminates the obstacle to matching at longer length scales} $\Delta_L \sim 0$ for $L \gtrsim L_* = (N/M^2)^{1/d}$.
In contrast, for $L < L_*$ the expected excess supply is smaller than the stochastic variability $M \times \textup{Volume}(h) < \sqrt{N \times \textup{Volume}(h)}$ and one can again show an a $\Delta_L \sim L^{1-d/2}/\sqrt{N}$ lower bound on the expected match distance achievable, i.e., the obstacle to matching at the smaller length scales remains as in the no-excess-supply case.
For $d \geq 2$, we saw that there is a largest obstacle to matching at the micro scale $L \sim 1/N^{1/d}$, and excess supply $M \leq N$ cannot overcome this obstacle, hence excess supply does not reduce the achievable cost significantly.
In contrast, for $d = 1$, the obstacles to matching are larger at longer length scales for $M=0$, and excess supply $M \gtrsim \sqrt{N}$ helps significantly by (nearly) eliminating these obstacles for length scales $L \gtrsim L_*=N/M^2$. See Figure~\ref{subfig:obstacles-d=1-Mlarge} for a visual representation of $\Delta_L$ versus $L$ for $d = 1$ and $M \in [\sqrt{N}, N]$.

Coming to the Hierarchical Greedy algorithm, a key attractive feature of this algorithm is that the expected total cost that it incurs due to matches at length scale $L \in \{1,1/2, 1/4, \dots\}$ is only as large (in terms of scaling) as the obstacle to matching $\Delta_L$ at the corresponding length scale times the total number of matches $N$. Remark~\ref{rem:HG-aligns-with-deltaL} below notes the emergence of this property in our formal analysis of Hierarchical Greedy. (One may expect a similar property to hold also for the standard greedy algorithm, but the state of the system under the standard greedy algorithm is hard to keep track of analytically \cite{akbarpour2021value}.) 
As a result the total cost of hierarchical greedy is the sum of the obstacles to matching at length scales $1/2^{\ell_0d}, 1/2^{\ell_0d -1}, \dots, 1$. For $d=1$ this series is geometrically increasing (by factor $\sqrt{2}$), and for $d \geq 3$ the series is geometrically decaying (by factor $2^{1-d/2}$). For any geometric series with factor bounded away from $1$, the largest term and the sum are within a constant factor of each other, so Hierarchical Greedy achieves the same cost scaling as the largest obstacle. For $d=2$, there is a $\log N$ factor gap for $M = O(N^{0.99})$ because there are $\log N$ obstacles of roughly the same size and so the cost of Hierarchical Greedy would be the largest obstacle times $\log N$; in this case we use a different algorithm to improve on this performance by a $\sqrt{\log N}$ factor (roughly, the alternate algorithm takes advantage of cancellation in discrepancies between demand and supply at different length scales), which yields a tight bound. 

%

\subsubsection{Formal proof}
We now provide a formal proof of Theorem~\ref{thm:semi-dynamic} characterizing the minimum achievable cost in the semi-dynamic model.

We denote the number of demand units in $h$ which arise during up to and including period $t$ by $\hat{n}_h(t)$. As in the theorem, we assume $M \leq N$ throughout. We fix the number of refined partitions $\ell_0$ as the largest integer satisfying $2^{\ell_0 d} \leq N$.

\begin{lemma}
Fix $t \in \{0, 1, \dots, N-1\}$. 
Define $k_*  \triangleq 1+\inf\big \{k \in \mathbb{Z}: M+N-t \geq \sqrt{19 N 2^{d(\ell_0-k)}}\big \}$. Note that $k_* \geq 0$.
If $k_* \leq \ell_0$, the probability that a given hypercube at level $k \geq k_*+1$ has at least one child which has seen (weakly) more demand up to period $t$ than its initial supply is bounded above
by $2\nu^{k-k_*}$, where $\nu \triangleq (2/e^2)^{d} < 1/10$ for all $d\geq 1$. 
\label{lemma:prob-hypercube-in-trouble}
\end{lemma}

\begin{proof}
We first argue that $k_* \geq 0$. Note that for $k \leq -1$, $\sqrt{19 N 2^{d(\ell_0-k)}} > \sqrt{19 N \cdot N} > 2N \geq M + N \geq M+N-t $, where we used the definition of $\ell_0$ and $M \leq N$. It follows that $k \leq -1$ are not a part of $\{k \in \mathbb{Z}: M+N-t \geq \sqrt{19 N 2^{d(\ell_0-k)}}\big \}$, and hence $k_*\geq 0$.

We begin by bounding from above the probability that a hypercube at level $k'\geq k_*$ has seen (weakly) more demand through $t$ than its initial supply. 
Consider a hypercube $h'$ at level $k'$. By Bernstein's inequality
\begin{align*}
  &\Pr (n_{h'}(1) \leq 2^{d(k' - \ell_0)}(t+N+M)/2) \\
  &\leq \exp \left ( - \frac{(1/2)((N+M-t)2^{d(k' - \ell_0)}/2)^2}{(N+M)2^{d(k' - \ell_0)} + (N+M-t)2^{d(k' - \ell_0)}/6} \right ) \\
  &\leq \exp \left ( - \frac{(N+M-t)^22^{d(k' - \ell_0)}}{8(N+M) + 4(N+M-t)/3} \right ) \leq \exp \left ( - \frac{(N+M-t)^22^{d(k' - \ell_0)}}{19N} \right ) \, ,
\end{align*}
using $M \leq N$.
Similarly, for $\hat{n}_{h'}$ we obtain
\begin{align*}
  \Pr ( \hat{n}_{h'}(t) \geq 2^{d(k' - \ell_0)}(t+N+M)/2) &\leq \exp \left ( - \frac{(N+M-t)^22^{d(k' - \ell_0)}}{11N} \right ) \, .
\end{align*}
It follows from a union bound that
\begin{align}
\label{eq:prob-running-out-t}
&\Pr(\hat{n}_{h'}(t) \geq n_{h'}(1)) \\
&\leq \Pr \big( \; (n_h(1) \leq 2^{d(k' - \ell_0)}(t+N+M)/2) \ \textup{OR} \ (\hat{n}_h(t) \geq 2^{d(k' - \ell_0)}(t+N+M)/2)\; \big) \, \nonumber\\
&\leq 2 \exp \left ( - \frac{(N+M-t)^22^{d(k' - \ell_0)}}{19N} \right ) \, \nonumber\\
&\leq 2 \exp(-2^{d(k' - k_*+1)}) \, , \qquad \textup{using $k' \geq k_*$ and the definition of } k_*\, , \nonumber\\
&\leq 2\exp(-2d(k' - k_*+1)) \, , \qquad \textup{since $2^{s} \geq 2s$ for $s=1, 2, 3, \dots$}  
\end{align}

We are now ready to deduce the claimed bound on the probability that a hypercube at level $k=k'+1 \geq k_*+1$ has a child which has seen weakly more demand than its initial supply. Since each (non-leaf) hypercube has $2^d$ children, a union bound tells us that we simply need to multiply the bound in \eqref{eq:prob-running-out-t} by $2^d$, i.e., we obtain a bound of $2\cdot (2/e^2)^{d(k-k_*)} = 2 \nu^{k - k_*}$. 

\end{proof}

\begin{lemma}
Fix $t \in \{0, 1, \dots, N-1\}$ and a leaf hypercube $\hd$. Let $k_{\max}= k_{\max}(t, \hd)$ be the (random) highest level at which the ancestor of $\hd$ has at least one child which has seen (weakly) more demand through period $t$ than its initial supply
$$k_{\max} \triangleq \max \; \{k: 1 \leq k \leq \ell_0, A_k(\hd) \textup{ has a child } h' \textup{ such that } \hat{n}_{h'}(t) \geq n_{h'}(1) \} \cup \{0\}\, .$$
It holds that at no point through period $t$ has a supply or demand unit in hypercube $A_{k_{\max}}(\hd)$ been matched outside of $A_{k_{\max}}(\hd)$. Furthermore, if the demand at period $t+1$ arrives to $\hd$, it will be matched inside $A_{k_{\max}}(\hd)$.
\label{lem:kmax}
\end{lemma}

\begin{proof}
First, note that in the case $k_{\max} = \ell_0$, $A_{k_{\max}}$ is the root node and the lemma holds trivially. Henceforth, assume $k_{\max} < \ell_0 $.

We will use induction on $k$,  
to establish that ``No child of $A_k \triangleq A_k(\hd)$ has had a supply or demand unit matched outside of itself'' for $k \in \{\ell_0, \ell_0-1, \dots, k_{\max}+1\}$.  The root node $A_{\ell_0}$ will form the induction base: Since $k_{\max} < \ell_0 $, we know that the root node has no child which has received weakly more demand through $t$ than its initial supply. And obviously, the root node has never had a supply unit in it get matched to a demand unit outside it (indeed, all demand lives in the root node). Reasoning inductively in time for periods $1, 2, \dots, t$, we obtain that, through $t$, no child of the root has needed to match a demand (supply) in it to supply (demand) outside it, and the claim indeed holds true for the root node. We now proceed to the induction in $k$. Suppose  the claim holds true for $k+1$ and $k \leq k_{\max}+1$. We then know that $A_{k}$, being a child of $A_{k+1}$ has not had a supply or demand unit being matched outside itself through period $t$. With the aforementioned property in place, we are again in the happy situation we encountered at the root node in establishing the induction base, and we deduce as before (using $k > k_{\max}$) that the claim holds true for $k$. Induction completes the proof. 

Using $k=k_{\max}+1$, since $A_{k_{\max}}$ is a child of $A_{k_{\max}+1}$, we deduce the first part of the lemma from the fact established above: at no point through period $t$ has a supply or demand unit in hypercube $A_{k_{\max}}$ been matched outside of $A_{k_{\max}}$. Also, by definition of $k_{\max}$, and considering $A_{k_{\max}+1}$, we know that its child $A_{k_{\max}}$ satisfies $\hat{n}_{A_{k_{\max}}}(t)< n_{A_{k_{\max}}}(1)$. Hence, if the demand at period $t+1$ arrives to $\hd$, there is at least one supply unit remaining in $A_{k_{\max}}$, and the demand will be matched inside $A_{k_{\max}}$.
\end{proof}

Using the previous two lemmas, we now establish a bound on the expected match cost of matching the $(t+1)$-th arrival, which will facilitate the proof of the theorem.
\begin{lemma}
There exists $C \triangleq C(d)< \infty$ such that  the expected cost of matching the $(t+1)$-th demand arrival for $t \in \{0, 1, \dots, N-1\}$ is bounded above by 
$C 2^{\min(k_*, \ell_0)-\ell_0}$,
for $k_*$ defined in Lemma~\ref{lemma:prob-hypercube-in-trouble}.
\label{lem:cost-in-period-tp1}
\end{lemma}

\begin{proof}
    Consider the demand which arrives at period $t+1$, to leaf hypercube $\hd=\hd(t+1)$.

    Note that if $k_* \geq \ell_0$, the claimed bound holds trivially for any $C \geq \sqrt{d}$, since any two points in the unit hypercube are separated by at most $\sqrt{d}$.  Henceforth, suppose $k_*< \ell_0$.

Recall $k_{\max}$ defined in Lemma~\ref{lem:kmax} for period $t$. Note that by definition of $k_{\max}$, we have
\begin{align}
    \Pr (k_{\max} = k) &\leq \Pr (A_{k}(\hd) \textup{ has a child } h' \textup{ such that } \hat{n}_{h'}(t) \geq n_{h'}(1) ) \, \nonumber\\
    &\leq 2 \nu^{k-k_*} \, ,
    \label{eq:kmax_bound}
\end{align}
for $ k \geq \max(k_*+1,1)$, where we used Lemma~\ref{lemma:prob-hypercube-in-trouble} in the second inequality above. 

Lemma~\ref{lem:kmax} assures us that the demand at period $t+1$ will be matched inside $A_{k_{\max}}(\hd)$.
It follows that the match cost in period $t+1$ is no more than the maximum distance between any two points in $A_{k_{\max}}(\hd)$, which is $\sqrt{d}2^{k_{\max}- \ell_0}$. Hence, we have
\begin{align}
    \Ex [\textup{Match cost in period } t+1] &\leq \sum_{k=0}^{\ell_0}  \Pr(k_{\max}=k) \sqrt{d}2^{k- \ell_0} \nonumber \\
    &\stackrel{\textup{(a)}}{\leq} \sqrt{d}2^{k_*-\ell_0} + \sum_{k=k_*+1}^{\ell_0}  \Pr(k_{\max}=k) \sqrt{d}2^{k- \ell_0} \nonumber\\
    &\stackrel{\textup{(b)}}{\leq}  \sqrt{d}2^{k_*-\ell_0} + \sqrt{d} 2^{k_*-\ell_0} \sum_{k=k_*+1}^{\ell_0} 2 \nu^{k-k_*} 2^{k - k_*}\nonumber\\
    &\stackrel{\textup{(c)}}{\leq}  \sqrt{d}2^{k_*-\ell_0}\big (1 + 
    2/(1-2\nu) \big ) \nonumber\\
    &\stackrel{\textup{(f)}}{\leq} 
    (7/2)\sqrt{d}2^{k_*-\ell_0} \, .
    \label{eq:match_cost_bound}
\end{align}
Here, (a) holds from splitting the summation into two parts (recall that $k_*\geq 0$), and upper bounding the first part using $\sqrt{d}2^{k- \ell_0} \leq \sqrt{d}2^{k_*- \ell_0}$ for $k \leq k_*$, and $\sum_{k=0}^{k_*}  \Pr(k_{\max}=k) \leq 1$. (b) follows from using the bound \eqref{eq:kmax_bound} on $\Pr(k_{\max}=k)$. (c) follows from summing the infinite geometric series and using $2 \nu < 1/5$. (f) follows from using $2 \nu < 1/5$. 
Thus we have established the claimed bound follows for $C \triangleq (7/2)\sqrt{d}$. (Note that this value of $C$ also works for the case $k_* \geq \ell_0$ discussed at the beginning of the proof.)
\end{proof}

\begin{proof}[Proof of Theorem~\ref{thm:semi-dynamic}]

The semi-dynamic setting immediately inherits all lower bounds in Theorem~\ref{thm:static-matching} (which considers the static setting), since the semi-dynamic setting is only harder than the static setting due to uncertainty about the future, and the two settings are identical otherwise.

We make use of Lemma~\ref{lem:cost-in-period-tp1} to prove the upper bounds in the theorem for $d \neq 2$, and also for $d=2$ and $M \in [\epsilon N, N]$. We begin with an observation about a key quantity, namely, $2^{k_* - \ell_0}$. From the definition of $k_*$, it follows that there a universal constant $C_1 <\infty$ which does not depend on $t$ and $d$ such that 
\begin{align}
    (1/C_1) \left ( \frac{N}{(M+N-t)^2}\right )^{1/d} \leq 2^{k_* - \ell_0} \leq C_1 \left ( \frac{N}{(M+N-t)^2}\right )^{1/d} \, .
    \label{eq:bound-on-expected-cost}
\end{align}
Now, we know from Lemma~\ref{lem:cost-in-period-tp1} that the expected cost in period $t+1$ is bounded above by $C\min(1,2^{k_* - \ell_0})$ for some $C = C(d) < \infty$.  
Combining with the upper bound in \eqref{eq:bound-on-expected-cost} and summing over $t$ from $0$ to $N-1$, we have that 
\begin{align}
\textup{Total expected cost} \leq C_2 \sum_{t=0}^{N-1}\min \left (1, \left ( \frac{N}{(M+N-t)^2}\right )^{1/d} \right ) \, ,
\label{eq:bound-on-total-cost}
\end{align}
for some $C_2= C_2(d) < \infty$.
What remains is a mechanical exercise of controlling the sum in \eqref{eq:bound-on-total-cost}, whose terms are non-decreasing in $t$, to arrive at the theorem for $d=1$ and $d\geq 3$. We outline this analysis next.  

Consider $d=1$. Let us begin with the case $M \geq \sqrt{N}-1$. Here, $ \frac{N}{(M+N-t)^2}< 1 $ for all $t < N$, and the sum can be bounded above by an integral
\begin{align*}
\sum_{t=0}^{N-1} \frac{N}{(M+N-t)^2}= N\sum_{\tau=M+1}^{M+N} \frac{1}{\tau^2} \leq N \int_M^{M+N} \frac{d\tau}{\tau^2} \, = N\left ( \frac{1}{M} - \frac{1}{M+N} \right) \leq \frac{N}{M} \, .
\end{align*}
(We will repeatedly use the idea of bounding a sum by an integral in analyzing the different cases.)
Since the average expected cost per match is just the total expected cost divided by $N$, we have proved the upper bound of $1/M$ in the theorem for this case. Next consider the case $M < \sqrt{N}-1$. Here, for each $t > M+N -\sqrt{N}$, the corresponding term in the sum is $1$, and the sum is bounded above by
\begin{align*}
    N\cdot \sum_{\tau=\sqrt{N}}^{M+N} \frac{1}{\tau^2} + \sqrt{N} - M \leq N \cdot \frac{1}{\sqrt{N}-1} + \sqrt{N} \leq 3 \sqrt{N} \, .
\end{align*}
Dividing by $N$ to move from sum to average, we have proved the upper bound in the theorem for this case. 

Next consider $d=3$. Note that the expected total cost, and also the upper bound \eqref{eq:bound-on-total-cost}, are non-increasing in $M$. Hence it suffices to prove the upper bound for the case of $M=0$. Here, for each $t > N -\sqrt{N}$, the corresponding term in the sum is $1$, and the sum is bounded above by
\begin{align*}
     N^{1/d}\cdot \sum_{\tau=\sqrt{N}}^{N} \frac{1}{\tau^{2/d}} + \sqrt{N}  \leq N^{1/d} \cdot \frac{N^{1-2/d}}{1-2/d} + \sqrt{N} \leq \left( \frac{2}{1-2/d}\right) N^{1-1/d} \, ,
\end{align*}
where we used $1-1/d > 1/2$ for $d\geq 3$. Dividing by $N$, we obtain the upper bound in the theorem for this case as well. 

For $d=2$ and $M \in [\epsilon N, N]$, the largest cost and the largest bound on the cost occurs for the smallest allowed value of $M$, namely, $M=\epsilon N$. We can bound the sum in \eqref{eq:bound-on-total-cost} as
\begin{align*}
\sum_{t=0}^{N-1} \frac{\sqrt{N}}{M+N-t} = \sqrt{N} \sum_{\tau=\epsilon N+1}^{N(1+\epsilon)} \frac{1}{\tau} \leq \sqrt{N} \int_{\epsilon N}^{N(1+\epsilon)} \frac{d\tau}{\tau} = \sqrt{N} \log ((1+\epsilon)/\epsilon) \, .
\end{align*}
Dividing by $N$ gives us the upper bound in the theorem.

To get our achievability result for the last remaining case of $d=2$ and $M \leq N^{1-\epsilon}$,
we use a matching algorithm different from Hierarchical Greedy. Here we leverage recent work on gravitational matching for uniform points on the surface of a sphere \cite{holden2021gravitational}. 
Using the online (gravitational matching) algorithm in that paper used to prove \cite[Corollary~3]{holden2021gravitational}, along with the translation from matching on the surface of a sphere in $\mathbb{R}^3$ to matching on the unit square via stereographic projection \cite[Proposition 8]{holden2021gravitational}, we obtain a bound of $\tilde{C}_2 \sqrt{\log (N+M-t+1)}/\sqrt{N+M-t+1}$ on the expected cost incurred in period $t$. Summing over $t = 1, 2, \dots, N$, the total expected cost is
\begin{align}
 \tilde{C}_2 \sum_{i=M+1}^{N+M} \sqrt{\frac{\log i}{i}}
 &\leq  \tilde{C}_2 \sqrt{\log (2 N)} \sum_{i=M+1}^{N+M} \frac{1}{\sqrt{i}}
 \leq \tilde{C}_2 \sqrt{2\log N} \int_{M}^{M+N} \frac{1}{\sqrt{x}} dx \\
  &\leq 2 \tilde{C}_2 \sqrt{2N \log N} \, . \nonumber
\end{align}
It follows that the expected cost per match is at most $2 \tilde{C}_2 \sqrt{2\log N}/\sqrt{N}$. This completes the proof of the theorem.
\end{proof}

We conclude this subsection with a brief discussion. Our formal analysis reveals an interesting property, namely, that Hierarchical Greedy incurs a cost at each length scale which is simply ($N$ times) the barrier to matching at that length scale. 
\begin{remark}
Lemma~\ref{lem:cost-in-period-tp1} tells us that, for length scale $L=2^{k_*-\ell_0}$ corresponding to some $0 \leq k_* \leq \ell_0$, for $t$ such that 
\begin{align}
  \sqrt{19\cdot 2^{2d} \cdot N /L^d }=\sqrt{19N 2^{d(\ell_0 - k_* +2)}} &> M+N-t \nonumber \\
  &\geq \sqrt{19N 2^{d(\ell_0 - k_* +1)}} =\sqrt{19\cdot 2^{d} \cdot N /L^d }\, ,
  \label{eq:kstar-condition}
\end{align}
the expected cost is at most $C L$. Hence for\footnote{For smaller $L$, there is no $t$ such that the condition \eqref{eq:kstar-condition} holds.} $L > L_* \sim (N/M^2)^{1/d}$, there is an interval of $t$ of size $\sim \sqrt{N}/L^{d/2}$, such that the expected match cost for those arrivals is (at most) order $L$. Assuming that the match cost for these periods is indeed close to $L$ (not just in expectation), we deduce that the total cost incurred by HG at length scale $L$ is $\sim L \cdot \sqrt{N}/L^{d/2} = \sqrt{N}/L^{d/2-1} \sim N \Delta_L $, i.e., the contribution of length scale $L$ to the expected cost per match scales as the barrier to matching $\Delta_L$ at length scale $L$ (see Section~\ref{subsec:intuition-obstacles}).

To complete this heuristic picture of the cost under HG, we note that Lemmas~\ref{lemma:prob-hypercube-in-trouble} and \ref{lem:kmax} assure us that that for periods $t$ satisfying \eqref{eq:kstar-condition}, the match cost is indeed close to $L$: Lemma~\ref{lem:kmax} tells us that the match distance is at most $2^{k_{\max}-\ell_0} \sqrt{d}$, where $k_{\max}$ is a random variable. In turn, Lemma~\ref{lemma:prob-hypercube-in-trouble} assures us that $k_{\max}$ is typically close to $k_*$ (specifically, it has a subexponential tail above $k_*$ with decay factor larger than 2). Finally, we note that a closer look at the analysis reveals that our upper bounds on match distance are in fact tight up to a constant factor, i.e., our upper bounds correctly capture the scaling of the expected match cost under HG with $N$ and $M$. 
\label{rem:HG-aligns-with-deltaL}
\end{remark}

We note that in the case $d=2$ and $M \leq N^{1-\epsilon}$, Hierarchical Greedy fails to achieve the optimal scaling of cost. The expected cost per match under HG is $\Theta(\log N /\sqrt{N})$; one way to see this is through the lens of Remark~\ref{rem:HG-aligns-with-deltaL}: the barrier to matching at each length scale is same $\Delta_L \sim 1/\sqrt{N}$, and HG incurs cost per match equal to the sum of $\Theta(\log N)$ barriers to matching, each of this size. Gravitational matching \cite{holden2021gravitational} allows to (provably) achieve the optimal scaling  $\Theta (\sqrt{\log N}/\sqrt{N})$. The intuition for HG's inadequacy here is that it fails to take advantage of ``cancellation'' of match distances between different length scales, whereas gravitational matching is able to do so.

\subsection{Analysis of Hierarchical Greedy for the fully dynamic model: Proof of Theorem~\ref{thm:MG-performance}}
\label{subsec:proof-FD-achievability}

The fully dynamic model is substantially more challenging technically than the semi-dynamic model. In order to obtain our achievability bounds using Hierarchical Greedy, we carefully select minimum supply requirements $(\gamma_k)_{k=0}^{\ell_0}$ (see \eqref{eq:gamma-def} below), to trade off between the cost incurred at different levels. Informally speaking, the key tradeoff is that smaller $\gamma_k$ reduces the risk of depletion at level $k$ but increases the risk of depletion at descendants (levels below $k$).

Our definition of the HG algorithm ensures the following key properties:
\begin{itemize}
  \item A demand unit arriving in $h \in \cH_k$ is matched at level $k+1$ or higher (recall the definition \eqref{eq:match_at_level_l}) only if, at that time, $n_h(t) \leq \gamma_k$; this is formalized in Lemma~\ref{lem:no-match-below-gamma} below.
  \item Only supply from ``well-supplied'' regions is used to serve ``outside'' demand from other regions (lines 11--15) which leads to the property formalized in  Lemma~\ref{lem:no-outside-match-below-eta} below.
  \item The number of supply units $n_h(t)$ for $h \in \cH_k$ follows a stochastic process which -- after a short initial transient; see Lemma~\ref{lem:bound-on-Tl} -- stochastically dominates a lazy unbiased random walk which is reflected from below at $\gamma_k$ and reflected from above at $\gamma_{k+1}/2^d  \geq \gamma_k$; see Lemma~\ref{lem:Wh-is-a-LB} below. As a result the probability of being at the lower boundary $\gamma_k$ is at most $1/\min(\gamma_{k+1}/2^d - \gamma_k,1)$ in steady state (and not much more than that after the mixing time has elapsed); see Lemma~\ref{lem:RW-time-at-lower-boundary}. Moreover, the fraction of demand that gets matched at level $k+1$ is at most this probability (by the first bullet) and hence inherits the same upper bound; see Lemma~\ref{lem:cost-at-level-ell}.
\end{itemize}

\begin{lemma}[Lower boundaries are respected]
For any level $k \leq \ell_0-1$ and any hypercube $h \in \cH_k$, if $n_h(t) \leq \gamma_k$ then Hierarchical Greedy does not use a supply unit from $h$ for matching with the demand unit in period $t$.
\label{lem:no-match-below-gamma}
\end{lemma}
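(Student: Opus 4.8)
The plan is to establish a slightly stronger statement: if $h'$ is the leaf hypercube from which Hierarchical Greedy draws the supply unit in period $t$, then $n_{A_k(h')}(t) > \gamma_k$ for \emph{every} level $k\in\{0,1,\dots,\ell_0\}$, where $n_{\cdot}(t)$ denotes the count at the beginning of period $t$ (before the match-induced decrement). Granting this, Lemma~\ref{lem:no-match-below-gamma} follows at once: a supply unit taken from $h'$ lies in the level-$l$ hypercube $A_l(h')$ and in no other member of the partition $\cH_l$, so if some $h\in\cH_l$ with $n_h(t)\le\gamma_l$ were used we would need $h=A_l(h')$, forcing $n_h(t)=n_{A_l(h')}(t)>\gamma_l$, a contradiction. (The claim, and hence the lemma, in fact holds for all $l\le\ell_0$.)

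First I would isolate the bookkeeping invariant that at every point in time the count of any hypercube equals the sum of the counts of its $2^d$ children. This holds after the initialization in line~1 (the children partition the parent), and it is preserved by each of the two count-update operations---the leaf-to-root decrement executed when a match is made, and, in the fully dynamic setting, the leaf-to-root increment executed when a supply unit arrives---since each of these changes every hypercube along one root-to-leaf path by the same amount. A consequence is that whenever the greedy descent (lines~11--15) moves from a hypercube to its highest-count child, the child's count is at least $1/2^d$ times the parent's.

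The core of the argument splits at the match level $\ell$ ($\ell=1+\max\cS$ if $\cS\ne\emptyset$, with $\cS$ as in line~5, and $\ell=0$ otherwise). For high levels $k\ge\ell$, the descent never leaves $A_\ell(\hd)$, so $A_k(h')=A_k(\hd)$; and every such $k$ satisfies $k>\max\cS$ (or $\cS=\emptyset$), hence $k\notin\cS$, which by the definition of $\cS$ is exactly $n_{A_k(\hd)}(t)>\gamma_k$---this covers the range $\ell\le k\le\ell_0$ since $\ell\le\ell_0$ by the preceding correctness lemma. For low levels $k<\ell$ I would induct downward on $k$: the base case $k=\ell$ is the inequality just proved, and if $n_{A_{k+1}(h')}(t)>\gamma_{k+1}$ then $A_k(h')$, being the highest-count child of $A_{k+1}(h')$, has count at least $n_{A_{k+1}(h')}(t)/2^d > \gamma_{k+1}/2^d \ge \gamma_k$, the final inequality being precisely constraint~\eqref{eq:gamma_l-LB}; hence $n_{A_k(h')}(t)>\gamma_k$. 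This carries the bound down to $k=0$ and finishes the stronger claim.

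I do not expect a genuine obstacle here; the statement is combinatorial once the invariant is in hand. The points needing care are: (i) keeping all the counts referenced in line~5 and in the greedy descent equal to the beginning-of-period values, so that the ``high-level'' and ``low-level'' halves of the argument refer to the same quantities; (ii) the degenerate case $\cS=\emptyset$, where $\ell=0$, the descent loop is empty, $h'=\hd$, and the claim is immediate because $\cS=\emptyset$ directly says $n_{A_k(\hd)}(t)>\gamma_k$ for all $k$; and (iii) applying \eqref{eq:gamma_l-LB} in the right direction, $\gamma_k\le 2^{-d}\gamma_{k+1}$, which is exactly what absorbs the factor $2^d$ lost in the averaging step.
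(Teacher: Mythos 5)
Your proposal is correct and takes essentially the same approach as the paper: you identify the level-$\ell$ ancestor as well-supplied (via the definition of $\cS$), and then run the same downward induction along the greedy descent path, using the pigeonhole bound (best child has at least $1/2^d$ of the parent's count, since the children's counts sum to the parent's) together with the constraint $\gamma_k \le 2^{-d}\gamma_{k+1}$ from \eqref{eq:gamma_l-LB}. The only cosmetic difference is that you phrase it as a positive invariant along the selected path (covering the high levels $k\ge\ell$ explicitly via $k\notin\cS$) rather than as a contradiction argument, but the content is identical.
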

\begin{proof}
Suppose $h \in \cH_k$ and $n_h(t) \leq \gamma_k$. Since $n_h(t) \leq \gamma_k$, by definition of $\ell$ in lines 5-10 of HG we cannot have both $\ell = k$ AND $A_{\hd}(\cdot)=h$, i.e., a demand unit clearly cannot be matched at level $k$ to hypercube $h$. So the only way it can be matched in hypercube $h$ is that it is matched at level $\ell > k$ in $h' = A_{\ell}(\cdot)$ in
line 11 which is an ancestor of $h$, and that $h$ is subsequently selected as the child of choice on
line 13 enroute to a leaf. But we show that even this is not possible.
By definition of $\ell$, we have $n_{h'} > \gamma_\ell$. We argue by induction that this holds for every node in the path traversed by lines 12-14 to a leaf:
If a node at level $k'+1$ has supply (strongly) exceeding $\gamma_{k'+1}$ its best-supplied child (of its $2^d$ children) has supply (strictly) exceeding 
$\gamma_{k'+1}/2^d \geq \gamma_{k'}$ by \eqref{eq:gamma_k-LB}. Hence $h$ is not on the path traversed, i.e., a supply unit from $h$ is not used for matching.
\end{proof} 

\begin{lemma}[Only well-supplied regions are used to serve demand from outside]
\label{lem:no-outside-match-below-eta}
For any level $k \leq \ell_0-1$, any hypercube $h \in \cH_k$ and any period $t$, if $n_h(t) < \lceil \gamma_{k+1}2^{-d} \rceil$, 
and if the demand unit in period $t$ is not located in $h$, then a supply unit from $h$ is {not} used to serve it.
\end{lemma}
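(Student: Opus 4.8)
The plan is to mimic closely the induction already used in the proof of Lemma~\ref{lem:no-match-below-gamma}, since Lemma~\ref{lem:no-outside-match-below-eta} is essentially the ``one level up'' analogue of that statement. First I would fix $l \le \ell_0-1$, a hypercube $h\in\cH_l$ and a period $t$ with $n_h(t) < \lceil \eta_l \rceil$, and suppose the arriving demand unit in period $t$ lies in some leaf $\hd \notin h$ (so in particular $h$ is not on the path from $\hd$ to the root, and the level-$l$ ancestor $A_l(\hd)$ is a hypercube different from $h$). I want to show a supply unit from $h$ is not taken. Since the demand is not in $h$, the only way supply from $h$ could be used is if $h$ lies on the path traced out by lines 11--14 of HG, i.e., the chosen match level $\ell$ satisfies $\ell > l$, the level-$\ell$ ancestor $h' = A_\ell(\hd)$ is a (strict) ancestor of $h$, and at each step lines 12--13 descend into the child that eventually reaches $h$.

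The key step is to rule this out using the ``best-supplied child'' selection rule together with constraint~\eqref{eq:gamma_l-LB}. As in the earlier proof, by definition of $\ell$ (line 9, $\cS$ in line 5) we have $n_{h'}(t) > \gamma_\ell$. I would then run the same downward induction along the path from $h'$ toward the leaf: if a node at level $l'+1$ on the path has supply strictly exceeding $\gamma_{l'+1}$, then its best-supplied child among its $2^d$ children has supply strictly exceeding $\gamma_{l'+1}2^{-d} = \eta_{l'} \ge \gamma_{l'}$ (by the averaging/pigeonhole bound and \eqref{eq:gamma_l-LB}), so the induction continues and moreover each visited node at level $l'$ has $n_h(t) > \eta_{l'}$... wait, I need to be careful: the induction passes $n > \gamma_{l'}$ down, but to reach $h$ at level $l$ the path must first pass through the parent of $h$ at level $l+1$, whose supply strictly exceeds $\gamma_{l+1}$, and then $h$ is the \emph{best-supplied} among that parent's $2^d$ children, so $n_h(t) > \gamma_{l+1}2^{-d} = \eta_l$, hence $n_h(t) \ge \lceil \eta_l \rceil$. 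This contradicts the hypothesis $n_h(t) < \lceil \eta_l \rceil$. Therefore $h$ cannot be on the path, and no supply unit from $h$ is used.

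I should also dispense with the boundary case where $\ell = l+1$ and $h' = A_{l+1}(\hd)$ is exactly the parent of $h$: then $h$ being chosen on line 13 as the best-supplied child of $h'$ again forces $n_h(t) > \gamma_{l+1}2^{-d} = \eta_l$, the same contradiction; and the case $\ell \le l$ is vacuous since then the match occurs strictly inside $A_\ell(\hd)$, which does not contain $h$ (as $A_\ell(\hd)\neq h$ when $\ell = l$, and $A_\ell(\hd)$ is a strict ancestor of $A_l(\hd)\neq h$ when $\ell<l$) — actually the cleanest phrasing is simply: supply from $h$ is used only if $h$ lies on the line 11--14 path, and we have shown every node on that path strictly below the top has supply exceeding its $\eta$, in particular $n_h(t) > \eta_l \ge \lceil\eta_l\rceil - 1$, i.e. $n_h(t) \ge \lceil \eta_l\rceil$.

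The main obstacle, such as it is, is purely bookkeeping: making sure the induction hypothesis is stated at the right ``offset'' (the earlier lemma tracks $n > \gamma$, but here I need the stronger $n > \eta$ that holds for every node \emph{strictly below the root of the subtree being descended}, because $\eta_{l'} = \gamma_{l'+1}2^{-d}$ comes from the parent's $\gamma$, not from the node's own $\gamma$), and handling the corner case $\ell = l+1$ so that ``strictly below the top'' still includes $h$. There is no real analytic content; it is a direct consequence of the greedy child-selection in lines 11--14 and the geometric decay condition~\eqref{eq:gamma_l-LB} on the $\gamma_l$'s, exactly parallel to Lemma~\ref{lem:no-match-below-gamma}.
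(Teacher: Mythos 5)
Your proof is correct and is essentially the paper's argument spelled out in full: the paper's proof simply cites the descent induction of Lemma~\ref{lem:no-match-below-gamma} to conclude that any hypercube from which supply is drawn for an outside demand (necessarily matched at level $\ell>l$) must have supply strictly exceeding $\eta_l$, exactly the contradiction you derive. The ``offset'' you flagged is indeed the right subtlety, and you resolved it the same way the paper does — by noting that although the propagated invariant is $n>\gamma_{l'}$, each step of the best-supplied-child selection actually yields the sharper $n>\eta_{l'}=\gamma_{l'+1}2^{-d}$ at the child.
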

\begin{proof}
If the demand from outside $h$ is matched to a supply unit in $h$, by definition of HG (lines 11--15) it is  matched at a level $\ell$ strictly exceeding $k$. As in the proof of Lemma~\ref{lem:no-match-below-gamma}, it then follows that if the supply unit is taken from $h$, there is more than $\gamma_{k+1}/2^d$ supply in $h$.
\end{proof}

\begin{lemma}[All lower boundaries are reached soon and remain satisfied thereafter]
\label{lem:bound-on-Tl}
Assume $\ell_0$ satisfies $2^{d\ell_0} \leq m$. For any $k \leq \ell_0$ and any $h \in \cH_k$, under the HG algorithm we have $n_h(t) \geq \lfloor \gamma_k \rfloor$ for all $t \geq T_h$ w.p. 1, where $T_h \triangleq \inf\{t \geq 1: n_h(t) \geq \lfloor \gamma_k \rfloor\}$. Moreover, $\Ex[T]\leq  7m (\log m+1) = O(m\log m)$ for $T \triangleq \max_{h \in \cH_0 \cup \cH_1 \cup \dots \cup \cH_{\ell_0}} T_h$.
\end{lemma}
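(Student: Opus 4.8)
The plan is to prove the two assertions separately; both rest on Lemma~\ref{lem:no-match-below-gamma}. For the first assertion, the level $l=\ell_0$ is immediate: $h=\cC$ holds all $m$ supply units at the start of every period and $\gamma_{\ell_0}<m$ forces $\lfloor\gamma_{\ell_0}\rfloor\le m-1$, so $T_{\cC}=1$ and $n_{\cC}(\cdot)\equiv m\ge\lfloor\gamma_{\ell_0}\rfloor$. For $l\le\ell_0-1$ and $h\in\cH_l$ I would induct on $t\ge T_h$, with base case $n_h(T_h)\ge\lfloor\gamma_l\rfloor$ from the definition of $T_h$. In any single period $n_h$ changes by $0$ or $-1$ from the match and then by $0$ or $+1$ from the supply arrival. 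By Lemma~\ref{lem:no-match-below-gamma} a supply unit is taken from $h$ in period $t$ only when $n_h(t)>\gamma_l$, and since $n_h(t)\in\mathbb{Z}$ this forces $n_h(t)\ge\lfloor\gamma_l\rfloor+1$, so even after the decrement $n_h\ge\lfloor\gamma_l\rfloor$, and the subsequent arrival only helps; if no supply unit is taken from $h$, $n_h$ does not drop. Either way $n_h(t+1)\ge\lfloor\gamma_l\rfloor$, closing the induction.

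For the second assertion, fix $h\in\cH_l$ with $l\le\ell_0-1$ (the root being trivial as above). For every $t<T_h$ we have $n_h(t)<\lfloor\gamma_l\rfloor\le\gamma_l$, so Lemma~\ref{lem:no-match-below-gamma} forbids removing supply from $h$; hence up to time $T_h$ the count $n_h$ is non-decreasing and rises by $+1$ exactly in the periods when the supply unit falls in $h$, an event of probability $v_h\triangleq\Vol(h)=2^{-d(\ell_0-l)}$, independent across periods. Coupling all hypercubes through the common supply-arrival stream and using $n_h(1)\ge0$, it suffices to bound $T=\max_h T_h$ in the ``empty-start'' system, where $\Pr[T_h>t]\le\Pr\!\big[\textup{Binomial}(t-1,v_h)<\lfloor\gamma_l\rfloor\big]$. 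The one structural fact needed is that \eqref{eq:gamma_l-LB} telescopes, together with $\gamma_{\ell_0}<m$, to $\gamma_l\le 2^{-d(\ell_0-l)}\gamma_{\ell_0}<v_h m$, so $\lfloor\gamma_l\rfloor\le v_h m$; with $v_h\ge 2^{-d\ell_0}\ge 1/m$ (the hypothesis $2^{d\ell_0}\le m$) this says each hypercube needs at most $v_h m$ favorable arrivals at per-period rate $v_h$, i.e., has mean hitting time below $m$.

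To turn this into a bound on $\Ex[T]$, for $t-1>m$ I would apply the multiplicative Chernoff bound $\Pr[\textup{Binomial}(t-1,v_h)\le a]\le e^{-\mu}(e\mu/a)^{a}$ with $\mu=(t-1)v_h$ and $a=\lfloor\gamma_l\rfloor-1$; this bound is increasing in $a$ on $[0,\mu)$ and $a<v_h m<\mu$, so it is at most its value at $a=v_h m$, namely $e^{-v_h m\,\psi((t-1)/m)}$ with $\psi(r)=r-1-\ln r\ge0$, hence (using $v_h m\ge1$) at most $e^{-\psi((t-1)/m)}=e\,\tfrac{t-1}{m}\,e^{-(t-1)/m}$. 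Since the total number of hypercubes is $\sum_{l=0}^{\ell_0}2^{d(\ell_0-l)}\le\frac{2^d}{2^d-1}2^{d\ell_0}\le 2m$, a union bound gives $\Pr[T>t]\le\min\!\big(1,\,2e(t-1)e^{-(t-1)/m}\big)$, and integrating $\Ex[T]=\int_0^\infty\Pr[T>t]\,dt$ (the tail crosses $1$ near $t-1\approx m\log(2em)$, and $\int_a^\infty se^{-s}\,ds=(a+1)e^{-a}$) yields $\Ex[T]\le m\log m+O(m)\le 7m(\log m+1)$ for all $m\ge 3$, with the remaining value $m=2$ checked directly (there $\ell_0\le 1$ and one sees $T\equiv1$). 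A cruder Chernoff estimate already gives $\Ex[T]=O(m\log m)$, which is all that is used downstream.

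The main obstacle, such as it is, lies only in the last step: conceptually the first part is bookkeeping around Lemma~\ref{lem:no-match-below-gamma}, and the second reduces, via that same lemma, to a coupon-collector-type statement for i.i.d.\ uniform supply arrivals filling $\le 2m$ nested regions to prescribed levels that are small relative to region volume. What takes care is extracting the explicit constant $7$ from the union bound and the tail integration uniformly over $m\ge2$ (a looser choice of parameters loses this but keeps the $O(m\log m)$ scaling); also routine are the off-by-one between ``arrivals in the first $t-1$ periods'' and period $t$, and the degenerate case $\lfloor\gamma_l\rfloor=0$, where $T_h=1$.
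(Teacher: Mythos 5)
Your proposal is correct and follows essentially the same route as the paper: the permanence claim is deduced from Lemma~\ref{lem:no-match-below-gamma} exactly as in the paper, and the bound on $\Ex[T]$ uses the same structure (before $T_h$ the count $n_h$ only grows via Binomial arrivals of per-period rate $2^{-d(\ell_0-l)}$, the telescoped constraint gives $\gamma_l \le m\cdot 2^{-d(\ell_0-l)}$, union bound over $\le 2m$ hypercubes, then integrate the tail). The only difference is that you invoke the multiplicative Chernoff bound where the paper uses Bernstein's inequality, which is an interchangeable choice here.
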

\begin{proof}
Permanence of satisfying lower bounds follows from Lemma~\ref{lem:no-match-below-gamma}: any undersupplied hypercube only experiences supply arrivals and no supply departures and hence $n_h(t) > \gamma_k -1$ continues to hold for all $t \geq T_h$.

We now bound the duration of the transient $T_h$ for an individual hypercube $h \in \cH_k$. For all $t< T_h$, there is no supply departure from $h$ and there is a supply arrival to $h$ with probability $p=2^{d(k - \ell_0)}$. Consider $\tau \triangleq m(C_\tau \log m +1)$. We have
\begin{align*}
  \Pr (T_h > \tau) &= \Pr \left (\textup{Binomial}(\tau, 2^{d(k - \ell_0)}) \leq \gamma_k -1 \right )\\
  &\leq \Pr \left (\textup{Binomial}(\tau, 2^{d(k - \ell_0})) \leq m 2^{d(k - \ell_0)} \right )\\
  &\leq \exp \left (- \frac{(1/2)(Cmp \log m)^2}{(4/3)Cmp \log m} \right ) = \exp \left ( - \frac{3Cmp \log m}{8} \right ) \,\\
  &\leq \exp \left ( - (3/8)C_\tau \log m \right ) = 1/m^{3C_\tau/8}
\end{align*}
where the second inequality follows from Bernstein's inequality along with the observation that $p = 2^{d(k - \ell_0)} \geq 2^{-d\ell_0} \geq 1/m$; the latter observation is used again in the third inequality. 
The number of hypercubes  $h$ in $\cH_0 \cup \cH_1 \cup \dots \cup \cH_{\ell_0}$ is bounded above by $2^{\ell_0 d}(1 + 2^{-d} + 2^{-2d} + \dots) \leq 2m$ using $2^{\ell_0 d} \leq m$. Hence, it follows from a union bound that for $T \triangleq \max_{h \in \cH_0 \cup \cH_1 \cup \dots \cup \cH_{\ell_0}} T_h$
\begin{align*}
  \Pr (T > \tau) &\leq \sum_h \Pr (T_h > \tau) \leq 2m/m^{3C_\tau/8} \, .
\end{align*}
Let $\tau_0 \triangleq m(3 \log m +1)$; the idea is that this union bound is useful for $C_\tau> 3$, i.e., for  $\tau > \tau_0$. We deduce the claimed bound on $\Ex[T]$ as follows:
\begin{align*}
  \Ex [T] &= \int_{\tau=0}^\infty \Pr (T > \tau) d\tau\\
  &\leq \tau_0 + \int_{\tau=\tau_0}^\infty \Pr (T > \tau)d\tau \leq m (3 \log m + 7) \, ,
\end{align*}
where we used that $\int_{\tau=\tau_0}^\infty \Pr (T > \tau)d\tau \leq 6 m$ which follows from the union bound above and changing the variable of integration to $C_\tau$.
\end{proof}

For each $h \in \cH_k\,,\; k \in \{0, 1, \dots, \ell_0-1\}$, we define the lazy, doubly reflected random walk $W_h$ as follows for $t \geq T_h$: Define $\lrb  \triangleq  \lfloor \gamma_k  \rfloor$ and $\urb \triangleq \lceil \gamma_{k+1}/2^d \rceil -1$. These will be the lower and upper reflecting boundaries, respectively. The random walk $W_h$ starts at\footnote{Note that, by definition of $T_h$, we have $W_h(T_h) \geq \lrb$.} $W_h(T_h) = \min(n_h(T_h), \urb)$ and thereafter evolves as per
\begin{align}
  \tilde{W}_h(t) = \max\!\big (\, \lrb\, , \, W_h(t) - \mathbb{I}\big [h = A_k(\hd(t))\big ] \, \big )\nonumber\\
  W_h(t+1) = \min\!\big (\, \tilde{W}_h(t) + \mathbb{I}\big [h = A_k(\hs(t)) \big ]\, , \,\urb \,\big)
\label{eq:W-def}
\end{align}
for $t \geq T_h$, where $\hd(t)$ and $\hs(t)$ are the leaf hypercubes where demand and supply arrive in period $t$.
The idea is that $n_h(t)$ is bounded below by $W_h(t)$, and that $\tilde{n}_h(t)$  is bounded below by $\tilde{W}_h(t)$, where $\tilde{n}_h(t)$ is defined as the number of supply units in $h$  after the demand at period $t$ arrives but before the supply at period $t$ arrives.

\begin{lemma}[$W_h$ is a lower bound for supply $n_h$]
For all $t \geq T_h$, we have $n_h(t) \geq W_h(t)$ and $\tilde{n}_h(t) \geq \tilde{W}_h(t)$.
\label{lem:Wh-is-a-LB}
\end{lemma}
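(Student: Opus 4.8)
The plan is to prove the two inequalities $n_h(t)\geq W_h(t)$ and $\tilde n_h(t)\geq \tilde W_h(t)$ simultaneously by induction on $t\geq T_h$, interleaving the two claims in the order in which events occur within a period: first the demand arrival (which compares $\tilde n_h$ with $\tilde W_h$), then the supply arrival (which compares $n_h(t+1)$ with $W_h(t+1)$). The base case $t=T_h$ is immediate from the definition $W_h(T_h)=\min(n_h(T_h),\urb)\leq n_h(T_h)$.

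For the inductive step, assume $n_h(t)\geq W_h(t)$. I would first handle the demand sub-step. The recursion $\tilde W_h(t)=\max(\lrb,\, W_h(t)-\mathbb I[h=A_l(\hd(t))])$ decreases $W_h$ by at most $1$, and only when demand arrives to a leaf descendant of $h$. The key structural input is Lemma~\ref{lem:no-match-below-gamma}: whenever $n_h(t)\leq\gamma_l$, HG does not remove a supply unit from $h$ in period $t$, so $\tilde n_h(t)=n_h(t)$ in that case; and $n_h(t)$ drops by at most $1$ in any period in any case (a single demand is matched per period). So I split into cases. If $n_h(t)>\lrb\geq\gamma_l-1$, i.e.\ $n_h(t)\geq\lrb+1$ (using integrality), then even a unit decrease leaves $\tilde n_h(t)\geq\lrb$, while $\tilde W_h(t)\leq W_h(t)$; combined with $\tilde n_h(t)\geq n_h(t)-1\geq W_h(t)-1$ and $\tilde W_h(t)=\max(\lrb, W_h(t)-\mathbb I[\cdot])$, one checks $\tilde n_h(t)\geq\tilde W_h(t)$ in all sub-cases (whether or not demand hits $h$, whether or not a unit is removed). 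If instead $n_h(t)\leq\lrb=\lfloor\gamma_l\rfloor$, then $n_h(t)\leq\gamma_l$, so by Lemma~\ref{lem:no-match-below-gamma} no unit leaves $h$, giving $\tilde n_h(t)=n_h(t)\geq W_h(t)\geq\lrb\geq\tilde W_h(t)$ since $\tilde W_h(t)\leq\max(\lrb,W_h(t))$ and $W_h(t)$ is itself sandwiched between $\lrb$ and $\urb$. (I'd verify at the start, from the $W_h$ recursion, the invariant $\lrb\leq W_h(t)\leq\urb$, which is preserved by construction.)

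Next the supply sub-step: given $\tilde n_h(t)\geq\tilde W_h(t)$, show $n_h(t+1)\geq W_h(t+1)$. Here $W_h(t+1)=\min(\tilde W_h(t)+\mathbb I[h=A_l(\hs(t))],\urb)$, increasing by at most $1$ and only when supply arrives to a leaf descendant of $h$. A supply arrival to such a leaf increments $n_h$ by exactly $1$ (no matching happens after the supply arrival in a period). If supply does not arrive to $h$, then $n_h(t+1)\geq\tilde n_h(t)\geq\tilde W_h(t)\geq W_h(t+1)$ since dropping the indicator only helps. If supply arrives to $h$, then $n_h(t+1)=\tilde n_h(t)+1\geq\tilde W_h(t)+1\geq W_h(t+1)$, the last step because $W_h(t+1)=\min(\tilde W_h(t)+1,\urb)\leq\tilde W_h(t)+1$. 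This closes the induction.

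The main obstacle — really the only subtle point — is the demand sub-step: making sure the reflection-from-below in the definition of $\tilde W_h$ is ``conservative'' relative to the true dynamics, which is exactly where Lemma~\ref{lem:no-match-below-gamma} is needed (when $n_h$ is at or below $\gamma_l$, the true process cannot decrease, matching the floor in $\tilde W_h$), together with careful bookkeeping of the integrality of $\lrb=\lfloor\gamma_l\rfloor$ versus the real threshold $\gamma_l$. Everything else is routine monotonicity of $\min$ and $\max$ and the fact that each period involves exactly one demand departure and (in the fully dynamic model) one supply arrival.
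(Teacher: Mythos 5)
Your overall architecture (induction on $t$, interleaving the demand sub-step $\tilde n_h(t)\geq \tilde W_h(t)$ and the supply sub-step $n_h(t{+}1)\geq W_h(t{+}1)$, base case at $T_h$) matches the paper, and your treatment of the base case, the second (supply) sub-step, and the case $n_h(t)\leq\lrb$ are all sound. However, there is a genuine gap in your first case of the demand sub-step, and it is not merely bookkeeping.

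Consider $n_h(t)=W_h(t)>\lrb$ with the demand arriving \emph{outside} $h$ (so $\mathbb I[h=A_l(\hd(t))]=0$ and hence $\tilde W_h(t)=W_h(t)$), yet a supply unit \emph{is} removed from $h$ to serve that outside demand. Then $\tilde n_h(t)=n_h(t)-1=W_h(t)-1<W_h(t)=\tilde W_h(t)$, and the induction breaks. Your claim that the inequality holds ``in all sub-cases (whether or not demand hits $h$, whether or not a unit is removed)'' is therefore false: that particular sub-case is a genuine counterexample unless you can rule it out. You cannot rule it out using Lemma~\ref{lem:no-match-below-gamma} alone — that lemma only protects $h$ when $n_h(t)\leq\gamma_l$, but here $n_h(t)>\lrb=\lfloor\gamma_l\rfloor$. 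What is needed is Lemma~\ref{lem:no-outside-match-below-eta}: since $n_h(t)=W_h(t)\leq\urb=\lceil\eta_l\rceil-1<\lceil\eta_l\rceil$, HG never uses a supply unit from $h$ to serve a demand located outside $h$, so $\tilde n_h(t)=n_h(t)$ in this sub-case. You invoke the invariant $W_h(t)\leq\urb$ only in passing as a sanity check, but never deploy it for this purpose, and you never cite Lemma~\ref{lem:no-outside-match-below-eta} at all.

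Put differently: you correctly diagnose that the \emph{lower} reflecting boundary of $W_h$ is made conservative by Lemma~\ref{lem:no-match-below-gamma}, but you miss that the \emph{upper} reflecting boundary $\urb$ is equally load-bearing — it is precisely what keeps $W_h$ in the range where Lemma~\ref{lem:no-outside-match-below-eta} applies, guaranteeing that $h$ cannot lose supply to outside demand whenever $n_h(t)=W_h(t)$. The paper's proof handles this by splitting the equality case $n_h(t)=W_h(t)$ on whether the demand arrives in $h$, and invoking Lemma~\ref{lem:no-outside-match-below-eta} in the ``outside'' branch. To repair your argument, add a sub-case for $n_h(t)=W_h(t)$ and demand outside $h$, apply Lemma~\ref{lem:no-outside-match-below-eta} using $W_h(t)\leq\urb<\eta_l$, and conclude $\tilde n_h(t)=n_h(t)$.

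(A minor stylistic note: in your second case you write the chain $W_h(t)\geq\lrb\geq\tilde W_h(t)$, which is not valid in general; the correct chain is $W_h(t)\geq\tilde W_h(t)$ directly. It happens to be harmless here because that case forces $n_h(t)=W_h(t)=\lrb$ and hence $\tilde W_h(t)=\lrb$, but the intermediate inequality as written is misleading.)
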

\begin{proof}
We prove the lemma by induction on $t$.

We begin by establishing the induction base for $t= T_h$. This is straightforward; by definition of $W_h(T_h)$, we know that $n_h(T_h) \geq  W_h(T_h)$. And from Lemma~\ref{lem:no-match-below-gamma} (Hierarchical Greedy respects the lower boundary) and the definition of $\tilde{W}_h(t)$ for $t = T_h$, we know that $\tilde{n}_h(T_h) \geq \tilde{W}_h(T_h)$.

We will now show that if, for some $t$, we have $n_h(t) \geq W_h(t)$, then $\tilde{n}_h(t) \geq \tilde{W}_h(t)$ and that this further implies $n_h(t+1) \geq W_h(t+1)$. Induction will then complete the proof.
\smallskip

{$\mathbf{n_h(t) \geq W_h(t) \Rightarrow \tilde{n}_h(t) \geq \tilde{W}_h(t)}$.}
Suppose $n_h(t) \geq W_h(t)$. If the inequality is strict then $n_h(t) \geq W_h(t)+1$ and $\tilde{n}_h(t) \geq \tilde{W}_h(t)$ follows immediately since $\tilde{n}_h(t) \geq n_h(t) -1$ by definition of $\tilde{n}_h(t)$ (at most one supply unit from $h$ is used to serve the newly arrived demand), and $W_h(t) \geq \tilde{W}_h(t)$ by definition of $\tilde{W}_h(t)$. It remains to consider the case $n_h(t) = W_h(t)$. We consider two subcases corresponding to whether or not the demand arrival at period $t$ is located in $h$, and show the desired result in each case.
\begin{itemize}
  \item Suppose $h$ is an ancestor of $\hd(t)$, i.e., the demand arrival at period $t$ is located in $h$. Then, if $W_h(t)$ is not at the lower reflecting boundary $W_h(t) \geq \lrb + 1$, then by definition $\tilde{W}_h(t) = W_h(t) -1$, and we immediately infer that $\tilde{n}_h(t) \geq n_h(t) -1 \geq W_h(t) -1=\tilde{W}_h(t)$. On the other hand, if $n_h(t)=W_h(t) = \lrb$, then by definition of $\lrb$ and $\cS$, $h$ is undersupplied $k \in \cS$, and the HG algorithm uses a supply node from outside $h$ to serve the newly arrived demand, i.e., $\tilde{n}_h(t) = n_h(t) = W_h(t) = \tilde{W}_h(t)$.
  \item The other possibility is that the demand arrival at period $t$ is not located in $h$. Since $n_h(t) = W_h(t) \leq \urb < \gamma_{k+1}2^{-d}$, by Lemma~\ref{lem:no-outside-match-below-eta} we know that HG does not use supply from $h$ to serve the newly arrived demand. As a result $\tilde{n}_h(t) = n_h(t) \geq W_h(t) = \tilde{W}_h(t)$.
\end{itemize}

\smallskip
{$\mathbf{\tilde{n}_h(t) \geq \tilde{W}_h(t) \Rightarrow n_h(t+1) \geq W_h(t+1)}$.}
If the new supply arrival at period $t$ is located in $h$ then $n_h(t+1) = \tilde{n}_h(t)+1 \geq \tilde{W}_h(t) +1 \geq W_h(t+1)$ by definition of $W_h(t+1)$, which is the result we seek. On the other hand, if the new supply arrival at period $t$ is not located in $h$, then $n_h(t+1) = \tilde{n}_h(t) \geq \tilde{W}_h(t) = W_h(t+1)$, which is again the result we seek.

\smallskip
Induction completes the proof.

\end{proof}

\begin{lemma}[Probability that $W_h$ is at its lower boundary]
There exists a universal constant $C< \infty$, such that for any level $k \leq \ell_0-1$, any hypercube $h \in \cH_k$ and for all $t \geq \hat{T}_h  \triangleq T_h + C 2^{(\ell_0-k)d}(\urb-\lrb+1)^2 \log m$, we have
\begin{align*}
  \Pr (W_h(t) = \lrb) \leq 2 /(\urb - \lrb + 1) \, .
\end{align*}
In the case that the initial supply is evenly spread out,  for all $t\geq 1$ we have $\Pr (W_h(t) = \lrb) \leq 1/(\urb - \lrb + 1)$.
\label{lem:RW-time-at-lower-boundary}
\end{lemma}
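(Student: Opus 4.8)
The plan is to recognize $W_h$ as a lazy nearest-neighbor random walk on the integer interval $\{\lrb,\lrb+1,\dots,\urb\}$ that is reflected at both endpoints and runs on a ``slow clock'': by \eqref{eq:W-def}, in any period the walk makes a non-trivial move only if a demand unit or a supply unit lands in $h$, an event of probability $\Theta(p)$ with $p\triangleq 2^{d(l-\ell_0)}=\mathrm{vol}(h)\le 2^{-d}\le 1/2$. Reading off the one-step transition probabilities from \eqref{eq:W-def}: from an interior state the walk goes down with probability $p(1-p)$, up with probability $p(1-p)$, and holds otherwise; from the bottom state $\lrb$ it goes up with probability $p$ and holds otherwise; from the top state $\urb$ it goes down with probability $p(1-p)$ and holds otherwise. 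This chain is irreducible, aperiodic and reversible, and solving the detailed-balance equations shows its stationary distribution $\pi$ is uniform on $\{\lrb+1,\dots,\urb\}$ with a slightly smaller mass $\pi(\lrb)=(1-p)/(\urb-\lrb+1-p)$ at the bottom state; since $p\le 1/2$ this gives both $\pi(\lrb)\le 1/(\urb-\lrb+1)$ and $\pi(\lrb)\ge \tfrac{1}{2(\urb-\lrb+1)}$. (If $\urb=\lrb$ both claimed bounds are trivial, so assume $\urb>\lrb$.)

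In the evenly-spread case, $T_h=1$ and, by our choice of $(\gamma_l)$ together with a count of grid points inside $h$, $n_h(1)\ge\urb$, so $W_h(1)=\urb$ deterministically --- the walk starts at its top reflecting boundary. The chain is stochastically monotone: a coupling of two copies driven by the common arrival sequence preserves their pointwise order, because each of the two updates in \eqref{eq:W-def} is a monotone map of the previous state. Since the point mass $\delta_{\urb}$ stochastically dominates $\pi$, it follows that the law of $W_h(t)$ stochastically dominates $\pi$ for every $t\ge 1$, and hence $\Pr(W_h(t)=\lrb)=\Pr(W_h(t)\le\lrb)\le\pi(\{\lrb\})\le 1/(\urb-\lrb+1)$, as claimed.

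For general initial supply locations, condition on $\mathcal{F}_{T_h}$: by the i.i.d.\ arrivals, $\big(W_h(T_h+s)\big)_{s\ge 0}$ is then the reflected walk above, started from $W_h(T_h)\in\{\lrb,\dots,\urb\}$, so it suffices to bound its mixing time uniformly over the starting state. The reflected lazy symmetric walk on $\urb-\lrb+1$ states has spectral gap $\Theta\big(1/(\urb-\lrb+1)^2\big)$, and thinning it by the factor $\Theta(p)$ to obtain $W_h$ costs another factor $\Theta(p)$, so $W_h$ has relaxation time $O\big(2^{(\ell_0-l)d}(\urb-\lrb+1)^2\big)$. Using $\pi_{\min}=\pi(\lrb)\ge \tfrac{1}{2(\urb-\lrb+1)}$ we have $\log(1/\pi_{\min})=O(\log m)$ (since $\urb<m$), so the standard relaxation-to-$\ell^\infty$ mixing estimate gives, for $t-T_h\ge C\,2^{(\ell_0-l)d}(\urb-\lrb+1)^2\log m$ with a suitable universal $C$, that $\big|\Pr(W_h(t)=\lrb\mid\mathcal{F}_{T_h})-\pi(\lrb)\big|\le\pi(\lrb)$, whence $\Pr(W_h(t)=\lrb\mid\mathcal{F}_{T_h})\le 2\pi(\lrb)\le 2/(\urb-\lrb+1)$. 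Because this mixing estimate is monotone in $t$ it holds for all $t\ge\hat T_h$; taking expectations over $\mathcal{F}_{T_h}$ completes the proof.

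The routine ingredients are the detailed-balance computation and the monotone-coupling/stochastic-domination argument; the degenerate case $\urb=\lrb$ and the bookkeeping for $T_h$ are likewise straightforward. The one genuinely technical step --- and where care with constants and with the exponent of $\log m$ is required --- is the quantitative mixing bound for the reflected walk on the slow clock, obtained by combining its $\Theta(1/(\urb-\lrb+1)^2)$ spectral gap, the $\Theta(p)$ slowdown, and the $\log(1/\pi_{\min})=O(\log m)$ factor. A clean alternative to the spectral argument is a direct coupling estimate: under the monotone coupling the gap between two copies of $W_h$ is a nonnegative nonincreasing process that strictly decreases whenever the two copies straddle a reflecting endpoint and the relevant arrival occurs, so a gambler's-ruin computation gives expected coalescence time $O\big((\urb-\lrb+1)^2/p\big)$, and boosting the coalescence probability to $1-1/(\urb-\lrb+1)$ costs exactly the extra $O(\log m)$ factor appearing in the burn-in $\hat T_h-T_h$.
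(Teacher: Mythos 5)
Your computation of the stationary distribution via detailed balance matches the paper exactly, and your monotone-coupling / stochastic-domination argument for the evenly-spread initial condition is the same idea as the paper's (where the dominating copy $\hat W_h$ starts from $\pi$ while $W_h$ starts from $\urb$). Where you diverge is the burn-in analysis for general initial supply: the paper runs an explicit coupling-from-the-past argument, introducing a second copy $\hat W_h(T_h)\sim\pi$ driven by the same arrivals, noting that the two walks coalesce the first time $W_h$ hits the upper boundary $\urb$, and then controlling this hitting time by chopping time into $\lceil\log_2 m\rceil$ epochs of length $\Theta(m_l^2/q_l)$, in each of which a Bernstein-type estimate shows the walk hits $\urb$ with probability $\ge 1/2$. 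You instead invoke the reversibility and the $\Theta(p/m_l^2)$ spectral gap of the reflected lazy walk, together with the $\ell^\infty$-mixing bound and $\log(1/\pi_{\min})=O(\log m)$, to obtain the same $O\big(2^{(\ell_0-l)d}m_l^2\log m\big)$ burn-in. Both routes are valid and deliver identical scaling; the spectral route is shorter to state but relies on the (standard but here unproved) estimate that the slight boundary asymmetry does not change the order of the spectral gap, whereas the paper's coupling argument is self-contained and elementary, needing only Bernstein's inequality. Your second ``clean alternative'' --- coupling plus a gambler's-ruin bound on the coalescence time driven by hitting $\urb$ --- is essentially the paper's argument, stated as a heuristic rather than worked out via epochs. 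Minor gaps: you assert $n_h(1)\ge\urb$ in the evenly-spread case without checking it against the definitions of $\gamma_l,\eta_l$ (the paper is equally terse here), and you leave the spectral-gap estimate for the perturbed reflected walk to the reader; neither is a flaw in the overall argument, but they are the places where a referee would ask for one more line.
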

\begin{proof}
The offset by $T_h$ is simply because $W_h$ starts at period $T_h$.
By definition, in each period $t \geq T_h$, $W_h$ first takes an downward step with probability $q_k\triangleq 2^{-(\ell_0-k)d}$ (unless it is at the lower boundary $\lrb$) and then takes a upward step with probability $q_k$ (unless it is at the upper boundary $\urb$). In words, $W_h(t)$ is a lazy doubly reflected simple random walk with a probability $p_k = 2 q_k (1-q_k)$ of taking a step (in a given period) if it is not at one of the boundaries, a probability $q_k$ of taking an upward step if it is at the lower boundary $W_h(t)$, and a probability $q_k(1-q_k)$ of taking a downward step if it is at the upper boundary. 

Let $\pi$ be the stationary distribution of $W_h$ with $\pi(k)$ being the probability of $W_h(t) = k$ in steady state. 
Then, $\pi$ satisfies detailed balance with $\pi^* \triangleq \pi({\lrb+1})= \pi({\lrb+2})= \dots = \pi({\urb-1}) = \pi({\urb})$ 
and $\pi_{\lrb} q_k = \pi_{\lrb+1} q_k (1-q_k)$, i.e., $\pi_\lrb = \pi^* (1-q_k)$. In words, $\pi$ is a uniform distribution over the possible values of $W_h$, except at the lower boundary $\lrb$, whose steady state probability of occurrence is slightly smaller than at the other possible values. It follows that $\pi({\lrb}) < 1/(\urb -\lrb +1)$. 

Standard arguments imply that the mixing time of  $W_h(t)$ is no more than $1+2(\urb-\lrb)^2/q_k$. We describe an argument based on \cite{levin2017markov} here:  Define the mixing time $\Tmix$ of a Markov chain as per Eq.~(4.33) in \cite{levin2017markov}, i.e., the smallest number of periods such that, from any starting state, the total variation distance between the distribution of the final state and the stationary distribution is no more than 1/4. Define the worst-case hitting time $\Thit$ as per Eq.~(10.5) in \cite{levin2017markov}, i.e., the worst case over starting state $\omega_1$ and final state $\omega_2$, of the expected time it takes to first hit state $\omega_2$ starting from state $\omega_1$. Note that $W_h(t)$ is a reversible Markov chain. Hence we can use its worst-case hitting time to bound its mixing time as per inequality~(10.24) in \cite{levin2017markov}:
\begin{align}
    \Tmix \leq 2 \Thit + 1 \, .
    \label{eq:Tmix-bounded-Thit}
\end{align}
A straightforward argument allows us to compute $\Thit$. First, note that the worst-case hitting time corresponds to the expected time to hit $\lrb$ when starting from $\urb$. (Recall the asymmetry in the behavior of the two boundaries; in particular, 
the walk is lazier at $\urb$ than at $\lrb$.) 
Now, we can generate $W_h(t)$ by considering a lazy random walk on a cycle with nodes labeled as $-(\urb - \lrb)+1, -(\urb - \lrb)+2, \dots, 0, \dots,  (\urb - \lrb)$, with the interpretation that node $ -(\urb - \lrb)+1 \leq i \leq (\urb - \lrb)-1$ on the cycle maps to $W_h = \urb - |i|$. 
At each node on the cycle, the random walk has equal likelihood of taking a step to the left or to the right (to close the cycle, the node $\urb - \lrb$ is treated as being immediately to the left of node $-(\urb - \lrb)+1$). The probability of taking a step at each node is $2q_k (1-q_k)$, with the exception of node $(\urb - \lrb)$, where the probability of taking a step is $2 q_k$. $\Thit$ is simply the expected hitting time of node $\urb - \lrb$ starting from node $0$, for the random walk on the cycle. And this time, using \cite[Proposition 2.1,][]{levin2017markov} regarding Gambler's ruin, is simply $\Thit = (\urb - \lrb)^2/(2 q_k (1-q_k)) \leq (\urb - \lrb)^2/q_k$. It follows from \eqref{eq:Tmix-bounded-Thit} that 
\begin{align}
    \Tmix \leq 1+2(\urb - \lrb)^2 /q_k \leq 2(\urb - \lrb+1)^2 /q_k =2(\urb - \lrb+1)^2  2^{(\ell_0-k)d}\, .
\end{align}

From the definition of $\Tmix$, and the fact that $W_h$ starts at time $T_h$, we know that the distribution of $W_h(t)$ for all $t\geq T_h + \lceil \log m \rceil \Tmix$ has total variation distance from the stationary distribution $\pi$ of at most $(1/4)^{\log m} \leq 1/m \leq 1/(\urb-\lrb+1)$, where the last inequality used $\urb= \lceil \gamma_{k + 1}2^{-d} \rceil -1 \leq \lceil m2^{-d} \rceil -1 \leq m-1$. Now $\lceil \log m \rceil \Tmix \leq 2 \log m \cdot 2  (\urb - \lrb+1)^2  2^{(\ell_0-k)d} = 4 \cdot 2^{(\ell_0-k)d}  (\urb - \lrb+1)^2 \log m$.
Combining with $\pi({\lrb}) < 1/(\urb -\lrb +1)$, we get that for all $t \geq \hat{T}_h \triangleq T_h + 4 \cdot 2^{(\ell_0-k)d}  (\urb - \lrb+1)^2 \log m$, we have
\begin{align*}
  \Pr (W_h(t) = \lrb) &\leq \pi(\lrb) +  \textup{(TV distance of $W_h(t)$ from $\pi$)} \\
  &\leq 2/(\urb-\lrb+1) \, .
\end{align*}
This completes the proof of the first part of the lemma.


In the case that the initial supply is evenly spread out,  $W_h$ starts at $W_h(1) = \urb$ and $T_h=1$. In this case, the distribution of $W_h(t)$ stochastically dominates $\pi$ for all $t \geq 1$ (this is immediate using induction on $t$), and hence \begin{align*}
  \Pr (W_h(t) = \lrb) &\leq \pi(\lrb)< 1/(\urb-\lrb+1) \, 
\end{align*}
for all $t \geq 1$.
\end{proof}

\begin{lemma}[Probability of being matched at level $k$]
For any level $\ell \in \{1, 2, \dots, \ell_0\}$, any hypercube $h \in \cH_\ell$ and in any period $t \geq \hat{T}_h$, the probability of a demand unit being matched at level $\ell$ is bounded above by $2 /(\overline{R}_{\ell-1}-\underline{R}_{\ell-1}+1)$.
Hence, for $t \geq \hat{T} \triangleq \max_{h \in \cH_0 \cup \cH_1 \cup \dots \cap \cH_{\ell_0-1}} \hat{T}_h$, the expected cost per match due to matches at level $\ell$ is bounded above by $C'2^{\ell-\ell_0} /(\overline{R}_{\ell-1}-\underline{R}_{\ell-1}+1)$ for some $C' = C'(d) < \infty$ which does not depend on $m$ and $\ell$. (In the case that the initial supply is evenly spread out, this holds for any period $t \geq 1$.)
\label{lem:cost-at-level-ell}
\end{lemma}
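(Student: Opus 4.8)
The plan is to split the expected cost in period $t$ according to the level $\ell$ at which the demand of that period gets matched, and for each $\ell\ge 1$ to show that a level-$\ell$ match occurs only when a specific level-$(\ell-1)$ hypercube has been depleted down to its lower reflecting boundary --- an event whose probability is already controlled by Lemma~\ref{lem:RW-time-at-lower-boundary}.

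First I would translate ``matched at level $\ell$'' into a depletion event. Let $\hd=\hd(t)$ be the period-$t$ demand arrival and let $\hat\ell$ be the level computed on lines~5--10 of Hierarchical Greedy, so that the match is obtained by starting from $A_{\hat\ell}(\hd)$ and descending greedily on lines~11--14. I would first check that the minimal common ancestor of the demand and its matched supply unit is \emph{exactly} $A_{\hat\ell}(\hd)$: when $\hat\ell\ge 1$ we have $\max\cS=\hat\ell-1$, hence $n_{A_{\hat\ell-1}(\hd)}(t)\le\gamma_{\hat\ell-1}\le\eta_{\hat\ell-1}$ using \eqref{eq:gamma_l-LB}, and then the descent argument from the proof of Lemma~\ref{lem:no-match-below-gamma} shows the greedy path never enters $A_{\hat\ell-1}(\hd)$, hence never enters any $A_l(\hd)$ with $l<\hat\ell$. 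Thus ``matched at level $\ell$'' is equivalent to $\hat\ell=\ell$, and for $\ell\in\{1,\dots,\ell_0\}$ this forces $\ell-1\in\cS$, i.e., $n_{h'}(t)\le\gamma_{\ell-1}$ where $h'\triangleq A_{\ell-1}(\hd)\in\cH_{\ell-1}$; this is the depletion event I will bound.

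Next I would bound $\Pr(n_{h'}(t)\le\gamma_{\ell-1})$ for a fixed $h'\in\cH_{\ell-1}$. Write $\underline R\triangleq\lfloor\gamma_{\ell-1}\rfloor$ and $\overline R\triangleq\lceil\eta_{\ell-1}\rceil-1$ for the reflecting boundaries of $W_{h'}$, so that $m_{\ell-1}=\overline R-\underline R+1$. By Lemmas~\ref{lem:bound-on-Tl} and~\ref{lem:Wh-is-a-LB} we have $n_{h'}(t)\ge W_{h'}(t)\ge\underline R$ once $t\ge T_{h'}$; since $n_{h'}(t)$ is integer-valued and $\gamma_{\ell-1}<\underline R+1$, the event $\{n_{h'}(t)\le\gamma_{\ell-1}\}$ forces $n_{h'}(t)=\underline R$, and then the sandwich $\underline R\le W_{h'}(t)\le n_{h'}(t)=\underline R$ forces $W_{h'}(t)=\underline R$. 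Hence for $t\ge\hat T_{h'}$, Lemma~\ref{lem:RW-time-at-lower-boundary} yields $\Pr(n_{h'}(t)\le\gamma_{\ell-1})\le\Pr(W_{h'}(t)=\underline R)\le 2/m_{\ell-1}$, and $\le 1/m_{\ell-1}$ for every $t\ge1$ in the evenly-spread case. Since $\hd$ is independent of the configuration $(n_h(t))_h$ at the start of period $t$ (the latter being measurable with respect to the history before $\hd$ is revealed), averaging this bound over the $2^d$ children $h'$ of a given $h\in\cH_\ell$ --- the bound being the same for each --- gives the first assertion: once $t$ is past the mixing transients of the level-$(\ell-1)$ hypercubes, conditional on the demand's level-$\ell$ ancestor being $h$ (and hence also unconditionally) the period-$t$ demand is matched at level $\ell$ with probability at most $2/m_{\ell-1}$.

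Finally I would pass to expected cost: a demand matched at level $\ell$ sits, together with its matched supply unit, inside a single level-$\ell$ hypercube of side $2^{-(\ell_0-\ell)}$ and hence of diameter $\sqrt d\,2^{\ell-\ell_0}$, which bounds the corresponding match distance; multiplying by the level-$\ell$ match probability gives an expected per-period contribution of at most $(2/m_{\ell-1})\,\sqrt d\,2^{\ell-\ell_0}=C'2^{\ell-\ell_0}/m_{\ell-1}$ with $C'\triangleq 2\sqrt d$, and taking $\hat T\triangleq\max_{h\in\cH_0\cup\dots\cup\cH_{\ell_0-1}}\hat T_h$ makes this valid for all levels at once whenever $t\ge\hat T$ (and for all $t\ge1$ when the initial supply is evenly spread). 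I expect the main obstacle to be the first step --- extracting from the HG rules and the descent argument of Lemma~\ref{lem:no-match-below-gamma} that ``matched at level $\ell$'' both pins down the hypercube $h'\in\cH_{\ell-1}$ and certifies $n_{h'}(t)\le\gamma_{\ell-1}$ --- whereas the remainder is the integrality/sandwich reduction to Lemma~\ref{lem:RW-time-at-lower-boundary} together with light bookkeeping of the transient times $\hat T_{h'}$ across adjacent levels, which the aggregate $\hat T$ absorbs.
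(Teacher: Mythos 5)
Your proposal is correct and follows essentially the same route as the paper: translate a level-$\ell$ match into the event $n_{A_{\ell-1}(\hd)}(t)\le\gamma_{\ell-1}$, use Lemma~\ref{lem:bound-on-Tl} and Lemma~\ref{lem:Wh-is-a-LB} to sandwich $n_{h'}(t)=W_{h'}(t)=\underline R_{\ell-1}$ once past the transient, invoke Lemma~\ref{lem:RW-time-at-lower-boundary} for the probability, and multiply by the level-$\ell$ diameter. You are slightly more careful than the paper in one place that it glosses over — you verify via the descent argument of Lemma~\ref{lem:no-match-below-gamma} that the set-theoretic match level coincides with the $\ell$ computed on lines~5--10 of HG (since the greedy descent never enters the undersupplied $A_{\hat\ell-1}(\hd)$), which is needed for the ``$\ell-1\in\cS$'' implication to hold against the formal definition of ``matched at level $\ell$'' — but this is a refinement of the same proof, not a different one.
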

\begin{proof}
For a match to occur at level  $\ell \geq 1$, it must be that the level $\ell-1$ ancestor $h'=A_{\ell-1}(h)$ of the arrival leaf node $h$ is in $\cS$, i.e., is undersupplied $n_{h'}(t) \leq \gamma_{\ell-1}$, by definition of $\ell$ in lines 5--10 of HG. But for $t \geq \hat{T}_{h'}> T_{h'}$, this requires $W_{h'}(t) = n_{h'}(t) = \underline{R}_{\ell-1}$ by Lemma~\ref{lem:Wh-is-a-LB}, which occurs with probability at most $2/(\overline{R}_{\ell-1}-\underline{R}_{\ell-1}+1)$, by Lemma~\ref{lem:RW-time-at-lower-boundary}. (In the case that the initial supply is evenly spread out, this holds for any period $t \geq 1$.)

Now, for each level $\ell$ hypercube has side length $2^{\ell-\ell_0}$, and hence the maximum distance between any pair of points in the same level-$\ell$ hypercube is bounded  by $(C'/2) 2^{\ell-\ell_0}$ for some $C' = C'(d) < \infty$ which does not depend on $m$ and $\ell$. It follows that the matching distance for a match at level $\ell$ is bounded above by $(C'/2) 2^{\ell-\ell_0}$. Multiplying with the upper bound $2/(\overline{R}_{\ell-1}-\underline{R}_{\ell-1}+1)$ on the probability of matching at level $\ell$ we get the expected cost per match due to matches at level $\ell$.
\end{proof}

Using Lemma~\ref{lem:cost-at-level-ell}, we know that the expected cost per match at level $\ell \geq 1$ is bounded above by $C'2^{\ell-\ell_0}/(\overline{R}_{\ell-1}-\underline{R}_{\ell-1}+1) \leq  C'2^{\ell-\ell_0}/(\gamma_{\ell}2^{-d} - \gamma_{\ell-1})$, for all $t \geq \hat{T}$ and for all $t\geq 1$ if the supply is initially evenly distributed. For the same $C'$ the maximum distance between any pair of points which lie within the same level $0$ hypercube is bounded above by $(C'/2)2^{-\ell_0}$, and hence this is an upper bound on the cost per match at level $0$. Summing the cost per match at level $\ell$ over $\ell =0, 1, \dots, \ell_0$, the expected cost per match for $t \geq \hat{T}$ is bounded above by
\begin{align}
 \Ex[\textup{Distance between matched pair at }t \, |\, t \geq \hat{T}] &\leq (C'/2)2^{-\ell_0} + C' \cdot 2^{-\ell_0} \sum_{\ell = 1}^{\ell_0} 2^\ell/ (\gamma_{\ell}2^{-d} - \gamma_{\ell-1}) \, .
 \label{eq:expected-match-cost-general}
\end{align}

Our approach for establishing Theorem~\ref{thm:MG-performance} will be to choose the $(\gamma_k)$s in a manner that they satisfy $\gamma_{k}2^{-d} - \gamma_{k-1} = \beta^k$ for $k = 1, 2, \dots, \ell_0$. Specifically, we will define
\begin{align}
\gamma_{k} &\triangleq m2^{-(\ell_0-k)d}- \sum_{k' = k}^{\ell_0}
\beta^{k'}2^{-d(k'-k)} \, , \qquad \textup{for } k = 0, 1, \dots, \ell_0 \, .
\label{eq:gamma-def}
\end{align}
The idea behind our definition of $\gamma_k$ is to introduce an additional slack of $\beta^k$ as we go down from level $k+1$ to level $k$ of the tree, for each $k$. Note that $\gamma_{k+1}2^{-d} = \gamma_k + \beta^k > \gamma_k$ for all $k \leq \ell_0-1$, as required,
and $\gamma_{\ell_0} < m$ holds by definition. With this choice of $(\gamma_k)$s, we have, using \eqref{eq:gamma-def}, that
\begin{align}
 \Ex[\textup{Distance between matched pair at }t \, |\, t \geq \hat{T}] &\leq (C'/2)2^{-\ell_0} + 2C' \cdot 2^{-\ell_0} \sum_{\ell = 1}^{\ell_0} (2/\beta)^{\ell-1} \, .
 \label{eq:expected-match-cost-beta}
\end{align}
We will make use of $\beta = 2.01$ for $d\geq 2$ and $\beta = 2$ for $d=1$ to establish the theorem. In particular, these choices of $\beta$ will ensure $\gamma_k \geq 0$ for all $k$. 

\begin{proof}[Proof of Theorem~\ref{thm:MG-performance}]
{\bf Proof for $d \geq 2$.} We set $\ell_0$ to be the largest integer with $2^{d\ell_0} \leq m/4$.
This choice of $\ell_0$ ensures that there are $\Theta(m)$ leaf hypercubes, and the average number of supply units per leaf hypercube is $\Theta(1)$ (in particular, it is at least $4$ units). 
We fix $\beta = 2+ \delta \in (2, 3)$ where $\delta \in (0,1)$ will be chosen later, and $\textup{for } k = 0, 1, \dots, \ell_0$, set $\gamma_k$ as per \eqref{eq:gamma-def}.
Note that
\begin{align*}
\gamma_k \geq   4\cdot 2^{k d} -\beta^k \cdot \frac{1}{1-\beta2^{-d}}
 \geq 4\cdot 2^{k d} - 4\cdot 3^k \geq 0 \,
\end{align*}
as required, where the first inequality used the definition of $\ell_0$, the second inequality used $\beta < 3$ and $\frac{1}{1-\beta2^{-d}} < \frac{2^{d}}{2^{d}-3} \leq 4$ for all $d\geq 2$, and the last inequality used $d \geq 2$.

Using \eqref{eq:expected-match-cost-beta}, the expected cost per match for $t \geq \hat{T}$ is bounded above by
\begin{align}
 \Ex[\textup{Distance between matched pair at }t \, |\, t \geq \hat{T}] &\leq (C'/2)2^{-\ell_0} + 2 C' \cdot 2^{-\ell_0} \sum_{\ell = 1}^{\ell_0} (2/\beta)^{\ell-1}\nonumber\\
 &\leq \Big(1/2+\frac{2}{1-2/\beta}\Big ) C' 2^{-\ell_0} \nonumber\\
 &\leq (3/2+2/\delta)C' 4m^{-1/d} \leq (14/\delta) C' m^{-1/d}  \, ,
 \label{eq:cost-after-That}
\end{align}
where the second inequality follows from plugging in $\beta = 2+\delta$ and the definition of $\ell_0$ which implies $2^{d(\ell_0+1)} > m/4 \Rightarrow 2^{-\ell_0}< 2\cdot 4^{1/d }\cdot m^{-1/d} \leq 4 m^{-1/d}$ for $d \geq 2$. This completes the proof for the case where the supply is initially evenly distributed.

To accommodate arbitrary initial locations of supply units, we now show that $\Ex[\hat{T}]= o(m^{1.01})$ for an appropriate  choice of $\delta$. Now
 \begin{align*}
   \Ex[\hat{T}] &\leq \Ex[T] + \max_{k \leq \ell_0-1} C 2^{(\ell_0-k)d}(\urb-\lrb+1)^2 \log m  \\
   &\leq 7m \log m + C (m/4) \log m \max_{k\leq \ell_0-1} (\beta^k+2)^2 2^{-2l}\, ,
   \end{align*}
 where the second inequality uses Lemma~\ref{lem:bound-on-Tl}, $2^{\ell_0 d} \leq m/4$, $\gamma_{k+1}2^{-d} - \gamma_k = \beta^k$ along with the definitions of $\urb$ and $\lrb$, and $d \geq 2 \Rightarrow 2^{-dk}\leq 2^{-2k}$.
   We have $\beta^k+2 \leq 2\beta^k$ for $k \geq 1$ and the maximizer of $4\beta^{2k}2^{-2k}$ is at $k = \ell_0 -1$, leading to
 \begin{align*}
 \max_{k\leq \ell_0-1} (\beta^k+2)^2 2^{-2l}
   \leq  4 (\beta/2)^{2(\ell_0-1)} = 4 (1+\delta/2)^{2(\ell_0-1)} \leq 4 m^{\delta/(2 \log 2)} \, ,
 \end{align*}

Plugging back into the bound on $\Ex[\hat{T}]$ we get
 \begin{align}
   \Ex[\hat{T}] &\leq (C+7)m^{1+\delta/(2 \log 2)} \log m \, = o(m^{1.01}) \, ,
   \label{eq:That-ub}
\end{align}
by choosing $\delta \triangleq 0.01$.
The total cost which accrues prior to $\hat{T}$ is bounded above by $\hat{T}$ since the cost per match is at most $1$, and hence the contribution of the cost of this initial transient to the average cost over $N$ periods is bounded above by $\Ex[\hat{T}]/N$. To get our result we will require that $N$ is large enough that this contribution is no more than $(C'/\delta)m^{-1/d}$. Specifically, we define $N_d (m)\triangleq (C+7)(\log m) m^{1+\delta/(2 \log 2)+1/d}  (\delta/C') = O(m^{1.01+1/d})$ and demand $N \geq N_d$ so that, using \eqref{eq:cost-after-That} and \eqref{eq:That-ub}, the expected average cost per match is bounded above by
\begin{align*}
  \frac{\Ex[\hat{T}]}{N} + \frac{(N-\hat{T})_+}{N}(14C'/\delta)m^{-1/d} &\leq \Ex[\hat{T}]/N_0 + (14C'/\delta)m^{-1/d}\\
  &\leq (C'/\delta)m^{-1/d} + (14C'/\delta)m^{-1/d} \leq (15C'/\delta)m^{-1/d}
\end{align*}

 We remark that this argument provides the desired bound on expected cost per match for any fixed $\beta \in \big(\, 2\,,\, (3/4)\cdot 2^d\, \big)$, under a long enough horizon (e.g., in steady state). Here $\beta < (3/4)\cdot 2^d$ is needed to ensure that the $\gamma_k$s are non-negative (the contribution of slack from higher levels of the tree decays exponentially), whereas $\beta> 2$ ensures that the contribution to cost from level $\ell$ of the tree decays exponentially in $\ell$. Our choice of $\beta = 2.01$, just above the minimum requirement $\beta>2$, ensures that the transient $\hat{T}$ is short by ensuring that the auxiliary random walks at the higher levels of the hierarchy mix rapidly (though a more refined analysis would control the cost during the transient and obtain a similar bound even for the case where a larger $\beta$ is used).

\smallskip
{\bf Proof for $d =1$.}
For $d=1$, we set $\ell_0$ to be the largest integer with $2^{\ell_0} \leq m/(1+\log_2 m)$. This choice of $\ell_0$ ensures that there are $\Theta(m/\log m)$ leaf hypercubes, and the average number of supply units per leaf hypercube is $\Theta(\log m)$ (in particular, it is at least $1+\log_2 m$). We fix $\beta = 2$ and use the definition \eqref{eq:gamma-def} for the $\gamma_k$s. Plugging in $\beta=2$ and $d=1$ we have
$$\gamma_k = m 2^{-(\ell_0-k)} - 2^k(\ell_0 - k+1) \geq 2^k (1+\log_2 m  - \ell_0 +k -1 )\geq 0 \, ,$$
using the definition of $\ell_0$ which implies $m 2^{-\ell_0} \geq 1+\log_2 m$. 

Using \eqref{eq:expected-match-cost-beta}, the expected cost per match for $t \geq \hat{T}$ is bounded above by $(C'/2)2^{-\ell_0} + 2C' \cdot 2^{-\ell_0} \ell_0 \leq 2C' (1+\ell_0)2^{-\ell_0} \leq 2C' \cdot (1+ \log_2 m) \cdot 2 (1+ \log_2 m)/m=4C' (1+ \log_2 m)^2/m$, where we used that the definition of $\ell_0$ implies $2^{-\ell_0} \leq 2 (1+ \log_2 m)/m$ and $\ell_0 < \log_2 m$. This completes the proof for the case where the supply is initially evenly distributed. Note that here, the bound on the cost due to matches at level $\ell$ is independent of $\ell$ for our choice of $\beta =2$; each term in the sum in \eqref{eq:expected-match-cost-beta} is the same.

Towards accommodating arbitrary initial locations of supply units, we now show that $\hat{T} = O(m^{2})$. We have 
\begin{align*}
 \Ex[\hat{T}] &\leq \Ex[T] + \max_{k \leq \ell_0-1} C 2^{\ell_0-k}(\urb-\lrb+1)^2 \log_2 m  \leq 7m \log m + C m \max_{k\leq \ell_0-1} (2^k+2)^2 2^{-k}\, ,
\end{align*}
 where the second inequality uses Lemma~\ref{lem:bound-on-Tl}, $2^{\ell_0 d} \leq m/(1+\log_2 m) \Rightarrow 2^{\ell_0 d} \log_2 m \leq m$, and $\gamma_{k+1}2^{-d} - \gamma_k = 2^k$ along with the definitions of $\urb$ and $\lrb$. Now
 \begin{align*}
   \max_{k\leq \ell_0-1} (2^k+2)^2 2^{-k} &\leq \max( (2^{\ell_0})^2 2^{-\ell_0+1}, 9) = \max( 2^{\ell_0+1}, 9) \\
   &\leq \max( 2 m /(1+ \log_2 m), 9) \leq 5 m \, ,
 \end{align*}
 where we used that for $\ell_0 \geq 3$, the maximizer occurs at $k = \ell_0 -1$ and $2^{\ell_0-1} + 2 \leq 2^{\ell_0}$, and $m \geq 2$ in the last inequality. Plugging back into the previous bound, we obtain
\begin{align*}
 \Ex[\hat{T}] &\leq 7m \log m +  5 Cm^2    \leq (5C+7)m^2 =O(m^2)\, ,
 \end{align*}
 using $\log m \leq m$. We define $N_0 (m)\triangleq (5C+7)m^3$ and require $N \geq N_0$ so that, as in the proof for $d\geq 2$, the expected average cost per match is bounded above by
\begin{align*}
  \frac{\Ex[\hat{T}]}{N} + \frac{(N-\hat{T})_+}{N} \cdot 4C' (1+ \log_2 m)^2/m &\leq \Ex[\hat{T}]/N_0 + 4C' (1+ \log_2 m)^2/m\\
  &\leq m^{-1} + 4C' (1+ \log_2 m)^2/m \\
  &\leq (4C'+1)(1+ \log_2 m)^2/m \, .
\end{align*}
This completes the proof.

We remark that in this case no other choice of $\beta$ works. Our choice $\beta = 2$ balances between the challenge of ensuring non-negativity of the $\gamma_k$s and ensuring that the cost at level $\ell$ does not grow with $\ell$. Our upper bound in this case is a factor $(\log m)^2$ larger than the nearest-neighbor-distance $\Theta (1/m)$. Since all levels of the tree contribute the same order of cost, this accounts for one factor $\log m$ in our upper bound. The second factor of $\log m$ enters since the slack introduced at all levels of the tree in the definition of the $\gamma_k$s adds up as $\ell_0$ equal terms (in our definition of $\gamma_0$), we need to have $\Theta(\log m)$ average supply at each leaf node to obtain $\gamma_0 \geq 0$ instead of $\Theta(1)$ average supply per leaf node needed for $d \geq 2$.
\end{proof}

\section{Application: Capacity planning for shared mobility systems}
\label{sec:capacity-planning}

Motivated by applications like ride-hailing and bikesharing, consider the fully dynamic model, augmented (interpreted) as follows:
\begin{itemize}
  \item We define physical time as $t/n$ where $n$ will be our scaling parameter, interpreted as the rate of arrival of demand with respect to physical time. We call $n$ the \emph{load factor}.
  \item Our system will have $n + m$ supply units circulating in it. New supply units will not arrive and existing supply units will not leave.
  \item Each supply unit becomes ``busy'' when it is matched to a demand unit. It remains busy for $n$ periods (a physical time interval of length 1, the time taken to complete a ride) and then reappears as a free supply unit at a uniformly random location. With regard to the ride-hailing application, the interpretation is that each demand unit has an i.i.d. uniformly random origin and an i.i.d. uniformly random destination. In each period, one supply unit becomes free (``arrives'') at a uniformly random location.\footnote{This includes the initial periods $1, 2, \dots, n$. Notionally, those supply units were already in transit at period 1. }
  \item The platform is able to choose the excess supply $m \geq 0$ beforehand. ($m$ cannot vary over time.) Excess supply costs $m$ per unit of physical time, i.e., $m/n$ per period.
  \item As before, the matching cost is equal to the distance between the demand unit and the matched supply unit.
  \item The platform's objective is to choose the excess supply $m$ (and the matching policy) so as to minimize the sum of the excess supply cost and the expected matching cost, over some time horizon.
  \item For simplicity, assume that initially the $m$ free supply units are at evenly spread locations.
\end{itemize}

Consider $d\geq 2$. For any fixed $m$, Theorem~\ref{thm:MG-performance} and Proposition~\ref{prop:closest-nhbr-lower-bound} tell us that the smallest achievable expected cost per period is $\Theta(1/m^{1/d})$, over any time horizon. The supply cost per period is $m/n$. Hence the total expected cost per period is $m/n + \Theta(1/m^{1/d})$ which is minimized by choosing $m = \Theta(n^{d/(d+1)})$. In words, the platform should ensure that the excess supply scales up as the $d/(d+1)$-th power of the load factor $n$ to minimize the total cost; this choice causes the system to be in the so-called quality-and-efficiency driven (QED) regime (i.e., the regime where two types of costs have the same scaling; the QED regime was first identified by Halfin and Whitt \cite{halfin1981heavy}).

This model for $d=2$ is a close cousin of that of Besbes, Castro and Lobel \cite{besbes2018spatial}. The goal of \cite{besbes2018spatial} was the same, namely, to quantify the (scaling of) the optimal amount of excess supply needed, and motivated by ridehailing, they focus on\footnote{A difference between the two models is that \cite{besbes2018spatial} incorporate pickup times (proportional to the match distance) in their formulation, whereas we don't.} $d=2$. An important distinction between the models is that whereas we concretely deal with the spatial supply state resulting from past matching decisions and the consequent match cost, \cite{besbes2018spatial} study an optimistic ``reduced form'' model which simply assumes that a crudely estimated nearest-neighbor-distance is achievable. Like us, \cite{besbes2018spatial} finds that the service firm should use excess supply which scales as the offered load to the exponent $d/(d+1)=2/3$. Thus, our finding above can be viewed as verifying the ``prediction'' of \cite{besbes2018spatial} regarding the optimal scaling of excess supply and extending it to all $d \geq 2$, by showing that the nearest-neighbor-distance is indeed achievable.


Note that for $d=1$, repeating our analysis above leveraging Theorem~\ref{thm:1d-impossibility} and Theorem~\ref{thm:MG-performance} reveals that the optimal scaling of excess supply is between $\sqrt{n}\sqrt{\log n}$ and $\sqrt{n}\log n$.
In contrast, applying the nearest-neigbor reasoning of \cite{besbes2018spatial} would lead to a ``predicted'' optimal excess supply which scales as $\sqrt{n}$, which would be \emph{nearly} correct, but would miss an additional polylogarithmic factor. We emphasize that we consider our findings that the nearest-neighbor-distance is nearly achievable for all $d \geq 1$ in the fully dynamic setting to be a pleasant surprise in light of the fact that nearest-neighbor based reasoning leads us badly astray in the case of 1-dimensional static matching (Fact~\ref{fact:AKT-1d-is-wrong}).


\section{Discussion}
\label{sec:discussion}

Note that the Hierarchical Greedy algorithm which powers our achievability results has the attractive feature that it does not require prior knowledge of the horizon (number of demand units).\footnote{The same holds for the gravitational matching algorithm we adopt from \cite{holden2021gravitational}.}

We mention that our achievability results for \( d\geq 3 \) in both the semi-dynamic and the fully dynamic model can be obtained from the results of Talagrand on static spatial matching \cite{talagrand1992matching}. In these cases, the nearest neighbor distance can be achieved to within a constant factor, and one can show this using \cite{talagrand1992matching} as follows.   Given \( n \) supplies and one demand chosen independently and uniformly from \([0,1]^d\), we imagine generating \( n-1 \) additional demands also uniformly and independently from \([0, 1]^d\), compute a min-cost matching with these supplies and demands, and decide to match the one ``real'' demand to the supply unit it was matched to in this augmented matching. On one hand, the cost of the edge matched is the average cost of an edge in the Talagrand setting, i.e., no more than \( C/n^{1/d} \) for some $C< \infty$ which does not depend on $n$. But also, as the marginal distribution of the matched supply given the set of supplies is uniform over the $n$ points, the remaining $n-1$ points are still independent and uniform over \([0, 1]^d\), so this can be iterated. This argument yields a \( (1/N)\sum_{n=M+1}^{N+M} C/n^{1/d} \leq  C'/N^{1/d} \) expected cost per match in the semi-dynamic setting, cf. Theorem~\ref{thm:semi-dynamic}. In the fully dynamic setting it yields a bound of \( C/m^{1/d} \), cf. Theorem~\ref{thm:MG-performance}. 

In the rest of this section we will discuss related work, and point out some open directions.

\subsection{Related work}

This work builds on and is inspired by the deep and beautiful results on static spatial matching of Ajtai, Komlos \& Tusnady \cite{ajtai1984optimal} in two dimensions, and Talagrand \cite{talagrand1992matching} in three and higher dimensions, and the work of Shor on average-case dynamic bin packing \cite{shor1986average,shor1991pack}, among others. In the average-case dynamic bin packing problem \cite{shor1986average,shor1991pack}, an unknown number of items of size i.i.d. Uniform$(0,1)$ arrive sequentially and must be packed in bins of size $1$, so as to minimize the expected wasted space; Shor shows that this problem is related to static spatial matching in two dimensions, with  time in the bin packing problem being analogous to a spatial dimension in the latter problem. \cite{shor1986average} inspires our proof of the lower bound for our fully dynamic model for $d=1$, while \cite{shor1991pack} bears a spiritual resemblance to our Hierarchical Greedy-based approach to achievability.

There is a rich line of work on (static) matching between a Poisson process (or other translation invariant point processes) and the Lebesgue measure in $\mathbb{R}^d$.  Interest in this infinite setup originated from the fact that an allocation rule
gives rise to a shift-coupling between a point process and its Palm version; see, e.g., \cite{holroyd2005extra}. \cite{hoffman2006stable} study the so-called stable matching between the two measures and prove lower bounds on the typical allocation distance. \cite{chatterjee2010gravitational} study gravitational allocation in the same setting for $d \geq 3$ and show exponential decay of the ``allocation diameter''; subsequently \cite{marko2016poisson} obtained a Poisson allocation with optimal tail behavior for $d \geq 3$ using a different approach inspired by \cite{ajtai1984optimal}. Recently, Holden, Peres, and Zhai \cite{holden2021gravitational} study gravitational matching for uniform points on the surface of a sphere and recover the achievability result of \cite{ajtai1984optimal} for 2-dimensional static spatial matching. Recall that we use a dynamic implementation of \cite{holden2021gravitational} to obtain a tight upper bound in our semi-dynamic model for $d=2$ and $M \leq N^{1-\epsilon}$.

We remark that while a dynamic implementation of gravitational matching (or other approaches developed previously for static matching) may yield tight achievability results in our (easier) semi-dynamic setting, such an approach would yield inferior results in our fully dynamic model for $d=1$ (by a polynomial factor) and $d=2$ (by a polylogarithmic factor), forcing us to develop a novel algorithmic and analytical methodology.\footnote{The idea is to construct a matching (transport) between $m$ i.i.d. ``supply'' points in $[0,1]^d$ and the $m$-times the Lebesgue measure on $[0,1]^d$, and to match each demand arrival to one of the $m$ points based on this transport. Such an approach ensures that the locations of the $m$ points remains i.i.d. uniform. However, for $d=1$ the cost of any transport between $m$ i.i.d. points and the Lebesgue measure is $\Omega(1/\sqrt{m})$. We achieve a much better scaling of $(\log m)^2/m$ via Hierarchical Greedy.} Specifically, our Hierarchical Greedy-based approach takes advantage of averaging over time, which static matching-based approaches fail to do.

Recently, Besbes, Castro and Lobel \cite{besbes2018spatial} estimate the scaling for the amount of ``safety'' supply (the excess of supply over demand) in order to be in the ``quality and efficiency driven (QED) regime''; see the discussion in Section~\ref{sec:analysis}. 

We briefly mention the earlier work of Bertsimas and Van Ryzin \cite{bertsimas1993stochastic} which studies the problem of routing vehicles in the plane under stochastic and dynamic vehicle arrivals, and allows for capacitated vehicles, i.e., each vehicle must return to its ``depot'' after serving a certain maximum number of demand units. They characterize system stability and  the minimum achievable expected waiting time; their results on the scaling of the expected waiting time are driven by the fact that the nearest-neighbor-distance in two dimensions scales as $1/\sqrt{\textup{Density of points}}$.

The very recent work \cite{akbarpour2021value} considers the semi-dynamic model in one dimension $d=1$ (the setting we termed the ``aberrant case'') and observes that excess supply significantly reduces the matching distance. 
\cite{akbarpour2021value} studies only the specific case $M = \Theta(N)$ and $d=1$ and shows an upper bound of $O(\log^3 N/N)$ for the average match distance under the greedy algorithm.  Our hierachical analysis leading to tight bounds for all $d$ and $M$ -- e.g., our results imply that the minimum achievable cost in the special case studied in \cite{akbarpour2021value} is in fact $\Theta(1/N)$ -- is unrelated to the analysis in \cite{akbarpour2021value}.\footnote{The technical contribution of \cite{akbarpour2021value} appears to be an analysis of the \emph{greedy} algorithm in particular (for $d=1$ and $M = \Theta(N)$), since an alternative binning-based approach which partitions $[0,1]$ into subintervals of length $\Theta(\log N/N)$ each, can be immediately shown to have cost upper bounded by $O(\log N/N)$, which is $(\log N)^2$ smaller than their upper bound on greedy. 
The greedy algorithm indeed appears hard to analyze sharply, which motivated us to introduce a hierarchical variant of greedy.}  \cite{akbarpour2021value} do not study the fully dynamic model, which may be setting closest to applications such as ridehailing. Our analysis reveals that, in fact, the fully dynamic model in $d$ dimensions loosely resembles the semi-dynamic model in $d+1$ dimensions, i.e., there is no analog in the fully dynamic model of the $d=1$ case in the semi-dynamic model. Moreover, in the semi-dynamic model the cost in the aberrant $d=1$ case is driven by the longer length scales which makes this case fundamentally different from $d\geq 2$, and hence the aberrant case is unrelated to the fully dynamic model for any $d \geq 1$.

\subsection{Open directions} We leave it as a challenging open problem to close the logarithmic factor gap between our upper and lower bounds for $d =1$ in the fully dynamic model. We expect that the resolution of this open problem will lead to new algorithmic insights, 
specifically, on how to take advantage of ``cancellation'' between mismatches in the quantities of demand and supply at different length scales. We conjecture that our lower bound is tight, and moreover expect that greedy (or any similar approach) will \emph{not} achieve the optimal scaling because it does not leverage such cancellation.

Simulation studies indicate that standard greedy performs slightly better than Hierarchical Greedy in the settings we consider. However, formally analyzing standard greedy seems very challenging and remains open, with the exception of the semi-dynamic model with $d=1$ and $M \propto N$, where a paper \cite{balkanski2023power} subsequent to the present work has shown that standard greedy achieves the optimal scaling of $\Theta(1/N)$.  

Our achievability results relying on Hierarchical Greedy need the match cost to not grow too fast with match distance (here we assumed match cost equal to the match distance). Suppose match cost is equal to the $\textup{(Euclidean distance)}^p$ for $p > 1$. First consider the semi-dynamic model with $d \geq 3$ and $M=0$. For $p < d/2$ our approach using Hierarchical Greedy yields an upper bound of $O(1/N^{p/d})$ on the expected average match cost (the micro length scale dominates the cost) which matches the nearest-neighbor-based lower bound. In contrast for $p>d/2$, the expected cost of Hierarchical Greedy scales as $O(1/\sqrt{N})$ (here the macro length scale dominates the cost), whereas we conjecture that the nearest-neighbor-based lower bound $\Omega(1/N^{p/d})$ is tight for $p < d$. Next, consider the fully-dynamic model with $d=1$. For any $p > 1$, we expect our Hierarchical Greedy-based approach will only yield an upper bound of $O(1/{m})$, whereas expected average match cost of $O(\textup{polylog}(m)/m^{p})$ may be achievable by a different algorithm. Establishing these conjectures would require a novel approach(es), to overcome the deficiency of Hierarchical Greedy that it matches some demand units over a large distance. A related direction is to obtain tail bounds on the matching distance in our dynamic matching models.


It is of interest to explore what happens when demand and supply have different spatial distributions. In this case, the expected average matching cost must exceed the minimum average cost achievable in the continuum limit, and one might hope that the excess cost on top of the cost in the continuum limit may have the same scaling as the characterizations in the present paper. This direction may lead, moreover, to the development of a theory of network revenue management \cite{talluri2004theory} with many types.



\begin{acks}[Acknowledgments]
This paper has  benefited from extensive conversations with Itai Ashlagi, Omar Besbes, Yeon-Koo Che, David Goldberg, Akshit Kumar, Jacob Leshno, Amin Saberi, Michel Talagrand, and
Shawn (Shangzhou) Xia. Above all, we are extremely grateful to the anonymous referee whose astute observations and suggestions have vastly improved the paper. 
\end{acks}

\bibliographystyle{IEEEtran}

\bibliography{matching}

\begin{appendix}

\section*{Proofs of Proposition~\ref{prop:closest-nhbr-lower-bound}, Theorem~\ref{thm:static-matching} and Theorem~\ref{thm:1d-impossibility}}

\begin{proof}[Proof of Proposition~\ref{prop:closest-nhbr-lower-bound}]
The state of the system consists of the locations of the $m$ supply units prior to a period. For arbitrary state, we will show that the expected distance between the next demand arrival and the closest supply unit is at least $1/(Ck^{1/d})$ for some $C= C(d) < \infty$ which does not depend on the state. As a result, the expected match cost in each period is at least this much, establishing the proposition.  

Fix a state. Let $B_i$ be the ball of radius $r=2/(Ck^{1/d})$ around the $i$-th supply unit, for $1\leq i \leq m$. The volume of $B_i$ is proportional to $r^d = \frac{2^d}{C^d m}$, and hence for large enough $C< \infty$, the $\textup{Vol}(B_i) \leq \frac{1}{2m}$. Hence the $\textup{Vol}(\cup_{i=1}^m B_i) \leq  1/2$. On the other hand, $\Vol(\cC) =1$ and hence $\Vol(\cC \backslash (\cup_{i=1}^m B_i)) \geq 1/2$. It follows that with probability at least $1/2$, the demand arrival will be outside this union of balls $\cup_{i=1}^m B_i$, i.e., the distance to the nearest supply unit will be at least $r$. We immediately infer that the expected distance between the next demand arrival and the closest supply unit is at least $r/2=1/(Ck^{1/d})$.
\end{proof}

\begin{proof}[Proof of Theorem~\ref{thm:static-matching}]
All our upper bounds (i.e., achievability results) in this setting are immediate from Theorem~\ref{thm:semi-dynamic} since the semi-dynamic setting is only harder than the static setting due to uncertainty about the locations of future arrivals.
Hence, we only need to prove the lower bounds.

The lower bound for $d \geq 3$ 
follows from the fact that the nearest-neighbor-distance remains $\Omega(1/N^{1/d})$ for any $M \leq N$.

The lower bound for $d=1$ is proved by building on the intuition we provide next.
Observe that (i) the density of demand units is $N$ and the density of supply units is $N+M$, (ii) the stochastic fluctuations in the number of demand (supply) units in an interval of length $L$ are of order $\sqrt{NL}$. As a result, for $M \in [\sqrt{N}, N]$, the excess supply density $M$ ensures that there is no supply shortage over subintervals of length $L$ exceeding $L_*\sim N/M^2$ since the excess supply $ML$ is larger than the stochastic fluctuation $\sqrt{NL}$ for such intervals.  Now, the supply shortage over a subinterval of length $L \lesssim L_*$ requires matching over an expected distance $\Delta_L \sim \sqrt{L/N}$ for $L \lesssim L_*$, since we must have $N\Delta_L \sim \sqrt{NL} \Rightarrow \Delta_L \sim \sqrt{L/N}$.
The matching cost is dominated by that from scale $L \sim L_*$ (provided $N/M^2 > 1/N \Leftrightarrow M<N$, where $1/N$ is the nearest-neighbour distance), which is $\Delta_{L_*} \sim 1/M$.

We now turn this intuition into a formal proof. Fix $M \in [\sqrt{N}, N]$. Define $L_0 \triangleq N/(2M^2) \leq 1/2$. For any matching $\pi$ between supply and demand which matches all $N$ demand units, define $f: [0,1] \to \{0, 1, 2, \dots \}$ as $$f(x) \triangleq \{\# \textup{ matched pairs s.t. the members of the pair are located on opposite sides of } x  \}\, .$$ The distance for any individual match may be expressed as
$$\int_{0}^1 \mathbb{I}(\textup{the members of the pair are located on opposite sides of } x)dx \, .$$
and, summing over matched pairs and dividing by $N$, the average matching distance across all pairs is clearly
$(1/N)\int_{0}^1 f(x) dx$. We now establish the lower bound
\begin{align}
\mathbb{E} \Big[ \min_\pi \int_{0}^1 f(x) dx \Big] \geq N/(C_1 M) \, ,
\label{eq:int-fx}
\end{align}
and dividing by $N$ on both sides this will immediately imply the desired lower bound $1/(C_1 M)$ on the expected average matching distance. 

Consider any subinterval $[x, x+L_0]$ of length $L_0$. The number of demand units in the subinterval is Binomial$(N, L_0)$ which has mean $NL_0 = N^2/(2M^2)$ and standard deviation $\sqrt{NL_0 (1-L_0} \geq  N/(2M)$. The number of supply units is a Binomial with mean $N^2/(2M^2) + N/(2M)$ and standard deviation less than $\sqrt{(N+M)L_0(1-L_0)} \geq  N/(2M)$. As a result the excess of demand over supply in the subinterval (defined as 0 if the demand is weakly less than the supply) has expected value at least $4N/(C_1 M)$ for some $C_1 < \infty$. (With probability at least $4/C_1$, the demand exceeds supply by at least $N/M$.) All this excess demand must be matched to supply outside the subinterval under any matching $\pi$ which matches all the demand. Hence $\Ex \big [\min_{\pi} (f(x) + f(x+L_0)) \big ] \geq 2N/(C_1 M)$. Integrating the left hand side over $x \in [0, 1-L_0]$, we have
\begin{align}
  \Ex \Big [\int_{0}^{1-L_0}\min_{\pi}(f(x) + f(x+L_0)) dx \Big] \geq (1-L_0)4N/(C_1 M) \geq 2N/(C_1 M) \, ,
\end{align}
using $L_0 \leq 1/2$.
Now, the left-hand side is bounded above by
\begin{align*}
  \Ex \Big [\min_{\pi}  \int_{0}^{1-L_0} ( f(x) + f(x+L_0) ) dx \Big] \leq 2 \int_{0}^{1}  f(x) dx \, .
\end{align*}
Putting the prior inequalities together, we obtain \eqref{eq:int-fx} as required, and this completes the proof.

For $d=2$, the proof of the lower bound of the AKT theorem \cite{ajtai1984optimal} (we refer to the version in  Talagrand's book \cite[Section 4.6]{talagrand2021upper}) extends easily to any $M < N^{1-\epsilon}$ since all length scales $L$ with $\sqrt{NL^2} \gtrsim ML^2 \Leftrightarrow L \lesssim \sqrt{N}/M$, which includes scales $L \in [\sqrt{\log N}/\sqrt{N}, N^\epsilon/\sqrt{N}]$, contribute $\Theta(1/\sqrt{N \log N})$ to the lower bound on cost (additively) and there are at least order $\log_2 \Big (\frac{N^\epsilon}{\sqrt{\log N}}\Big ) = \Omega(\log N)$ such scales for $M < N^{1-\epsilon}$. (Here we used that the smallest length scale is $\sqrt{\frac{\log N}{N}}$ and that consecutive length scales are separated by a factor $2^{\Theta(1)}= 1+\Theta(1)$ from each other in the proof.) 
For $d=2$ and $M \in [\epsilon N, N]$, as for $d\geq 3$, the total number of supply units $M+N$ is no more than $2N$, hence the expected distance to the nearest neighbor is at least $1/(C_2 \sqrt{N})$ which immediately yields the desired lower bound.
\end{proof}

Towards the proof of Theorem~\ref{thm:1d-impossibility}, we will need the following duality lemma for a minimum cost rightward perfect matching between two sets of points located in the unit square. While we will need only weak duality, we provide the full strong duality version.
\begin{lemma}
Let $(X_i)_{i \leq n}$ and $(Y_j)_{j \leq n}$ be located in $[0,1]^2$ such that it is possible to construct a ``rightward'' matching, i.e., a permutation $\pi$ on $(1, 2, \dots, n)$ such that the horizontal($x$) coordinate of $Y_{\pi(i)}$ is strictly larger than the horizontal coordinate of $X_i$, $x(Y_{\pi(i)}) > x(X_i) $ for all $i \leq n$.
Denote the set of allowed pairs by $E \triangleq \{(i,j): Y_{j} \textup{ is strictly to the right of } X_i\}$, the set of allowed matchings by $\Pi_{\textup{rt}} \triangleq \{\pi: (i, \pi(i)) \in E \; \forall i \leq n  \} \neq \emptyset$, and the ``vertical distance'' between $X_i$ and $Y_j$ by
$d^{\textup{v}}(X_i, Y_j) \triangleq |y(X_i) - y(Y_j)|$ where $y(\cdot)$ is the vertical coordinate of a point. Then we have
$$
\inf_{\pi \in \Pi_{\textup{rt}}} \sum_{i \leq n} d^{\textup{v}}(X_i, Y_{\pi(i)}) = \sup_{f \in \cF} \sum_{i \leq n} f(Y_i) - f(X_i) \, ,
$$
where $\cF$ is the set of functions $f$ on $[0,1]^2$ satisfying:
\begin{itemize}
  \item For each $y \in [0,1]$, the function $f(\cdot, y)$ is non-increasing.
  \item For each $x\in [0,1]$, the function $f(x, \cdot)$ is 1-Lipschitz.
\end{itemize}
\label{lemma:rightwardmatching-dual-function}
\end{lemma}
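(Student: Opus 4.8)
This statement is the strong linear‑programming duality for a finite minimum‑cost assignment problem: the primal matches $X_1,\dots,X_n$ to $Y_1,\dots,Y_n$ with cost $d^{\textup{v}}(X_i,Y_j)$ and edges outside $E$ forbidden. Since $\Pi_{\textup{rt}}\neq\emptyset$ and $\Pi_{\textup{rt}}$ is finite, the value $V:=\inf_{\pi\in\Pi_{\textup{rt}}}\sum_i d^{\textup{v}}(X_i,Y_{\pi(i)})$ is attained by some $\pi^*$. The plan is to prove the two inequalities separately: first the easy bound $V\ge\sup_{f\in\cF}\sum_i\big(f(Y_i)-f(X_i)\big)$ (this is the only half used later, and it needs no LP theory), and then the reverse bound by extracting from $\pi^*$ a potential on the $2n$ points whose score equals $V$ and extending it to a function on $[0,1]^2$ lying in $\cF$. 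It helps to work with the directed quasi‑distance $\rho(p,q):=|y(p)-y(q)|$ if $x(q)\ge x(p)$ and $\rho(p,q):=+\infty$ otherwise; one checks that $\rho$ satisfies the triangle inequality, that $\rho(X_i,Y_j)=d^{\textup{v}}(X_i,Y_j)$ whenever $(i,j)\in E$, and that $f\in\cF$ if and only if $f(q)-f(p)\le\rho(p,q)$ for all $p,q$ (take $x(p)=x(q)$ for the $y$-Lipschitz property, $y(p)=y(q)$ for monotonicity in $x$; conversely compose these through the corner $(x(q),y(p))$).

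\textbf{Easy inequality.} Fix $f\in\cF$ and $\pi\in\Pi_{\textup{rt}}$. Since $\pi$ is a permutation, $\sum_i\big(f(Y_i)-f(X_i)\big)=\sum_i\big(f(Y_{\pi(i)})-f(X_i)\big)$, and for each $i$, writing $r_i:=(x(Y_{\pi(i)}),y(X_i))$ and using $x(Y_{\pi(i)})>x(X_i)$, monotonicity gives $f(r_i)\le f(X_i)$ and $1$-Lipschitzness gives $f(Y_{\pi(i)})-f(r_i)\le|y(Y_{\pi(i)})-y(X_i)|=d^{\textup{v}}(X_i,Y_{\pi(i)})$, so $f(Y_{\pi(i)})-f(X_i)\le d^{\textup{v}}(X_i,Y_{\pi(i)})$. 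Summing, then taking the infimum over $\pi$ and the supremum over $f$, gives the bound.

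\textbf{Reverse inequality.} Form a directed graph on the $2n$ points with an arc $p\to q$ of length $\rho(p,q)$ for every ordered pair with $x(q)\ge x(p)$, plus for each $i$ a ``back'' arc $Y_{\pi^*(i)}\to X_i$ of length $-d^{\textup{v}}(X_i,Y_{\pi^*(i)})$. The crux is that this graph has no negative‑length directed cycle. Indeed a cycle uses back arcs for some index set $I$, and collapsing each intervening $\rho$-path into the corresponding direct $\rho$-arc (which only shortens the cycle, by the triangle inequality) turns it into a cyclic sequence $i_1\to i_2\to\cdots\to i_k\to i_1$ over $I$ with length $\sum_m d^{\textup{v}}(X_{i_m},Y_{\pi^*(i_{m+1})})-\sum_m d^{\textup{v}}(X_{i_m},Y_{\pi^*(i_m)})$; this equals the excess cost of the rearrangement $\pi^\epsilon$ that agrees with $\pi^*$ off $I$ and sends $i_m\mapsto\pi^*(i_{m+1})$ on $I$, which is $\ge 0$ once $\pi^\epsilon\in\Pi_{\textup{rt}}$ since $V$ is the minimum over $\Pi_{\textup{rt}}$. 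Having no negative cycle, adjoin a source with a zero‑length arc to each point and set $g(p):=$ shortest‑path length from the source to $p$; then $g$ is real‑valued, $g(q)\le g(p)+\rho(p,q)$ for all points $p,q$, and pairing the forward arc $X_i\to Y_{\pi^*(i)}$ with the back arc forces $g(Y_{\pi^*(i)})=g(X_i)+d^{\textup{v}}(X_i,Y_{\pi^*(i)})$, whence $\sum_i g(Y_i)-\sum_i g(X_i)=V$. Finally extend to $[0,1]^2$ by $f(p):=\min\!\big(\min_{s}[\,g(s)+\rho(s,p)\,],\ \max_{s}g(s)\big)$, minima over the $2n$ points $s$; the cap keeps $f$ finite (the inner minimum is $+\infty$ at points lying strictly to the left of all $s$), the ``$\rho$-Lipschitz on the point set'' property of $g$ gives $f(s)=g(s)$ for each of the $2n$ points, and a short triangle‑inequality check, using that constants and $\rho$-contractions are closed under pointwise minima, gives $f\in\cF$. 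Hence $\sup_{f\in\cF}\sum_i\big(f(Y_i)-f(X_i)\big)\ge V$.

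\textbf{Main obstacle.} The delicate point is the no‑negative‑cycle claim, and specifically that the rearrangement $\pi^\epsilon$ coming from a hypothetical negative cycle is a \emph{strictly} rightward matching, so that minimality of $\pi^*$ over $\Pi_{\textup{rt}}$ applies; this works once distinct points have distinct first coordinates (equivalently, no demand point coincides with a supply point), which is the setting of the intended application, where the first coordinate is time. Everything else --- arranging the extension $f$ to be simultaneously finite, a member of $\cF$, and exactly equal to the potential on the $2n$ points, which is what dictates the particular ``min with a cap'' form --- is routine bookkeeping.
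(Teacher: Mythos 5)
Your weak-duality argument coincides with the paper's (bounding each term via the corner point $(x(Y_{\pi(i)}),y(X_i))$), and that is the only half of the lemma that is actually invoked later, in the proof of Theorem~\ref{thm:1d-impossibility}. For strong duality you take a genuinely different route. The paper cites Talagrand's finite assignment-duality result to get dual optimizers $(w_i,w_j')$ and then writes down the explicit witness $f(X)=\max_{j}\{w_j'-d^{\textup{v}}(X,Y_j)\}$ over $j$ with $Y_j$ to the right of $X$. You instead re-prove the LP duality from scratch via a residual-graph/Bellman--Ford argument: $\rho$-arcs between all weakly-rightward ordered pairs of the $2n$ points, negative ``back'' arcs along the optimal matching $\pi^*$, a no-negative-cycle claim, shortest-path potentials, and an infimal-convolution extension capped from above. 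This is more self-contained (no LP black box) and mirrors the textbook combinatorial proof of assignment duality.

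There is, however, a real gap, and it is exactly where you flag it but resolve incorrectly. The no-negative-cycle step needs the rearrangement $\pi^\epsilon$ extracted from a candidate negative cycle to lie in $\Pi_{\textup{rt}}$, yet a collapsed forward $\rho$-arc $X_{i_m}\to Y_{\pi^*(i_{m+1})}$ only guarantees $x(Y_{\pi^*(i_{m+1})})\ge x(X_{i_m})$, whereas membership in $\Pi_{\textup{rt}}$ requires strict inequality. You dismiss this on the grounds that the intended application has distinct first coordinates; that is not so: in the reduction used for Theorem~\ref{thm:1d-impossibility}, each period $t\in\{1,\dots,N-1\}$ places one supply and one demand unit at the \emph{same} $x$-coordinate $t/N$. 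Moreover, without a distinctness hypothesis the strong-duality identity is simply false. Take $n=2$ with $X_1=(0.4,0)$, $X_2=(0.2,1)$, $Y_1=(0.6,0.9)$, $Y_2=(0.4,0.1)$: then $(1,2)\notin E$, $\Pi_{\textup{rt}}$ is the identity alone with cost $1.8$, while for every $f\in\cF$ one has $f(Y_1)-f(X_2)\le 0.1$ (monotonicity then Lipschitz) and $f(Y_2)-f(X_1)\le 0.1$, so the dual supremum is at most $0.2$. (The paper's own strong-duality step, the assertion that $w_j'\le f(Y_j)$ ``by definition'' with the strict set $\textup{Rt}(X)=\{Y:x(Y)>x(X)\}$, has the same issue.) The fix is to add the hypothesis that no $Y_j$ shares its $x$-coordinate with any $X_i$ (under which your cycle argument goes through), or to state and prove only the weak inequality, which is all that the downstream impossibility proof requires. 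Everything else in your construction — the characterization $f\in\cF\iff f(q)-f(p)\le\rho(p,q)$, the pinning of $g$ along $\pi^*$, and the capped infimal-convolution extension — checks out.
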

\begin{proof}
We start by establishing ``weak duality'' (LHS $\geq$ RHS). Note that for any $f \in \cF$ and any matching $\pi \in \Pi_{\textup{rt}}$, for all $i \leq n$ we have
\begin{align}
  f(Y_{\pi(i)}) - f(X_i) \leq f(Y_{\pi(i)}) - f(x(Y_{\pi(i)}), y(X_i)) \leq d^{\textup{v}}(X_i, Y_{\pi(i)}) \,
\end{align}
where we used that $f(\cdot, y(X_i))$ is non-increasing to get the first inequality, and that $f(x(Y_{\pi(i)}), \cdot)$ is 1-Lipschitz to get the second inequality. Summing over $i\leq n$ we get
\begin{align*}
\sum_{i \leq n} f(Y_i) - f(X_i) = \sum_{i \leq n} f(Y_{\pi(i)}) - f(X_i) &\leq \sum_{i \leq n} d^{\textup{v}}(X_i, Y_{\pi(i)})\, .
\end{align*}
Since this holds for any $f \in \cF$ and any matching $\pi \in \Pi_{\textup{rt}}$, we have
$$
\inf_{\pi \in \Pi_{\textup{rt}}} \sum_{i \leq n} d^{\textup{v}}(X_i, Y_{\pi(i)}) \geq  \sup_{f \in \cF} \sum_{i \leq n} f(Y_i) - f(X_i) \, .
$$

We now establish the reverse inequality (``strong duality'').  We make use of {\cite[Proposition 4.3.2]{talagrand2021upper}} with
\begin{align*}
  c_{ij}  \triangleq \left \{
  \begin{array}{ll}
  d^{\textup{v}}(X_i, Y_{\pi(i)}) & \textup{for all } (i,j) \in E\\
  \infty & \textup{otherwise}
  \end{array} \, \right .
\end{align*}
which tells us that there exists a dual optimum $(w_i)_{i \leq n}$ and $(w_j')_{j \leq n}$ such that
\begin{align}
  w_i + w_j'  \leq d^{\textup{v}}(X_i, Y_{j}) \ \textup{ for all } (i,j) \in E
  \label{eq:dual-cond-rt-matching}
\end{align}
and
\begin{align}
  \sum_{i\leq n} w_i + w'_i = \inf_{\pi \in \Pi_{\textup{rt}} }\sum_{i\leq n}  d^{\textup{v}}(X_i, Y_{\pi(i)}) \, .
  \label{eq:strong-duality-prop432}
\end{align}
Define $f: [0,1]^2 \rightarrow \mathbb{R}$ by
$$f(X) \triangleq \max_{j: Y_j \in \textup{Rt}(X)} w_j' - d^{\textup{v}}(X, Y_j) \quad \textup{ where } \textup{Rt}(X) \triangleq \{ Y: x(Y)> x(X)\} \, .$$
It is easy to check that this $f \in \cF$.
Note that $w_j' \leq f(Y_j)$ by definition and $w_i \leq - f(X_i)$ using the inequalities \eqref{eq:dual-cond-rt-matching}. It follows that $\sum_i w_i + w'_i \leq \sum_i f(Y_i) - f(X_i)$.  Combining with \eqref{eq:strong-duality-prop432} we have
$\inf_{\pi \in \Pi_{\textup{rt}} }\sum_{i\leq n}  d^{\textup{v}}(X_i, Y_{\pi(i)}) \leq \sum_i f(Y_i) - f(X_i)$ for this $f \in \cF$, which completes the proof.
\end{proof}

 Our proof of Theorem~\ref{thm:1d-impossibility} will draw crucially upon the Ajtai-Komlos-Tusnady \cite{ajtai1984optimal} lower bound for two-dimensional static matching, and thus our proof may be viewed as a formalization of the connection between the fully dynamic model in 1 dimension and the static model in 2 dimensions. Our proof here resembles a proof of Shor \cite[Lemma 1]{shor1986average}.\footnote{There is an analogy between the quantity $m$ here and the quantity $\sqrt{n \log n}$ there, i.e., $n \sim m^2/\log m$.  The ``right-matching'' constraint in \cite[Lemma 1]{shor1986average} is analogous to the obvious constraint in our setting that a demand unit can only match with a supply unit which arrived previously. The expected cost (``vertical distance'') per match there is shown to be at least $\sqrt{\log n/n}$ which evaluates to  $\log m/m$ as the corresponding lower bound on expected match distance in our setting.}, who establishes a lower bound for average-case online bin packing.

\begin{proof}[Proof of Theorem~\ref{thm:1d-impossibility}]

Consider a horizon $N = N_0 = Cm^2/\log m$ for some $C$ which we will choose later. (We will show how to extend to general horizons later.) 
Suppose we had perfect foresight regarding the locations of all supply and demand arrivals, and could choose matches accordingly. (We will show that the claimed lower bound holds even with this additional information.) This turns our matching problem into a 2-dimensional static rightward perfect matching problem with the vertical($y$) axis corresponding to space in the fully dynamic model, and the horizontal($x$) axis corresponding to scaled time $t/N$ in the fully dynamic model. The 2-d rightward static matching problem is specified as follows (parentheses will capture the corresponding features in the fully dynamic model, towards readability):
\begin{itemize}
\item (Total number of supply and demand units) There are $n = N+m-1$ supply units whose locations we will denote by $(X_i)_{i \leq n}$, and an equal number of demand units located at $(Y_i)_{i \leq n}$, in the unit square.
\item (Initial supply inventory) There are $m$ supply units with horizontal coordinate $x=0$ (i.e., present at time $t=0$), and arbitrary $y$ coordinates (spatial locations).
\item  (Demand and supply spatial locations are i.i.d. uniform) All other supply and demand units have a $y$-coordinate which is drawn i.i.d. uniformly from $[0,1]$. We will specify their horizontal coordinates next.
\item (One demand and one supply unit arrival in each period) One supply unit and one demand unit are at $x = t/N$ for all $t = 1, 2, \dots, N-1$. There are no supply units and $m$ demand units at $x=N/N=1$ (one is the demand unit which arrives at $t = N$ in the fully dynamic model; the remaining $m-1$ are ``dummy'' demand units we introduce for technical convenience so that there are an equal number of demand and supply units in our static problem).
\item (Each demand unit must be matched immediately, hence to a supply unit which arrived previously) Each demand unit must be matched, to a supply unit with a strictly smaller $x$-coordinate that the demand unit. (Note that it is feasible to construct a matching which satisfies this requirement.)
\item (Match cost) Each matched pair incurs a cost equal to the absolute difference between the $y$-coordinates of the demand unit and the supply unit. The goal is to minimize the total match cost.
\end{itemize}
We will prove that the expected total (vertical) match cost of any feasible perfect matching in this model is at least $(\epsilon_1/3) \sqrt{n \log n}) \geq  (\epsilon_1/3) \sqrt{N \log N}$ for some $\epsilon_1 > 0$ and any $C \geq 36/\epsilon_1^2$, for large enough $m$. We fix $C= 36/\epsilon_1^2$. Then the contribution of the $m-1$ dummy demand nodes to the total cost is bounded above by $m-1 \leq \sqrt{(N \log N)/C})= (\epsilon_1/6)\sqrt{N \log N}$, so that we  obtain an $(\epsilon_1/6) \sqrt{N \log N}$ lower bound on the expected total cost for the $N$ actual demand nodes, i.e., an expected average cost per match  in the fully dynamic model of at least $(\epsilon_1/6) \sqrt{\log N/N} \geq (\epsilon_1/6)\sqrt{(\log m)^2/C m^2} = (\epsilon_1^2/36) \log m/m$  for horizon $N=N_0=Cm^2/\log m$ in the fully dynamic model, for any matching policy. We will later show how to expect this lower bound to arbitrary horizon $N \geq m^2$.
\smallskip

AKT \cite{ajtai1984optimal} showed (as captured in our Theorem~\ref{thm:static-matching}) that for some $\epsilon_1 > 0$, we have
\begin{align}
  \Ex \bigg [\inf_{\tilde{\pi}} \sum_{i \leq n} d(\tilde{X}_i, \tilde{Y}_{\tilde{\pi}(i)}) \bigg ] \geq \epsilon_1 \sqrt{n\log n}
  \label{eq:AKT-lower-bound}
\end{align}
 for $(\tilde{X_i})_{i \leq n}$ and $(\tilde{Y_i})_{i \leq n}$ distributed i.i.d. uniformly in the unit square. 
\eqref{eq:AKT-lower-bound} does not directly enable us to prove the desired lower bound on expected cost because there are some differences between the distributions of points, the constraints, and the definition of cost, in the two models. In the AKT model the points $(\tilde{X_i})_{i \leq n}$ and $(\tilde{Y_i})_{i \leq n}$ are distributed i.i.d. uniformly in the unit square, whereas the $x$-coordinates in our case are deterministic (also the $y$-coordinates of the $m$ supply units at $x=0$ are arbitrary). We have a rightward matching constraint which is absent in AKT, and our cost is the vertical distance, whereas AKT consider the Euclidean distance.

The difference in the horizontal distributions of supply (demand) points in the two models is relatively easy to handle. To move the locations in the AKT model $(\tilde{X_i})_{i \leq n}$ and $(\tilde{Y_i})_{i \leq n}$ to the locations in our model $({X_i})_{i \leq n}$ and $({Y_i})_{i \leq n}$, we index the supply (demand) units from left to right in both models, and then move each AKT supply (demand) unit leftwards or rightwards so that it is located at the $x$-coordinate of the corresponding supply (demand) unit in our model. By the central limit theorem, along with $m = \Theta(\sqrt{n \log n})$ and the fact that the $x$-locations of $({X_i})_{i \leq n}$ and $({Y_i})_{i \leq n}$ deviate from even spacing by at most $m/(n+1)$, the expected distance moved by the $i$-th supply unit in this process, for each $i \leq n$, is bounded as
\begin{align*}
  \Ex [|x(\tilde{X}_i) - x(X_i)|] &\leq m/(n+1) + O(1/\sqrt{n}) \leq \sqrt{(\log n)/(2Cn)} + O(1/\sqrt{n}) \\
  &\leq \sqrt{\log n/(nC)}
\end{align*}
for large enough $m$, and the same bound holds for the difference in demand unit locations. In the case of the $m$ initially present supply units, we also need to move those points vertically (by at most $1$ unit each) to allow for arbitrary $y$-coordinates in our model, i.e.,
$
  |y(\tilde{X}_i) - y(X_i)| \leq 1\ \textup{for all } i \leq m \, ,
$ while all other $y$-coordinates are identical.
 It follows that
 \begin{align}
    \label{eq:move-bounds}
   \Ex \bigg [ \sum_{i \leq n} d(\tilde{X}_i, X_i) \bigg ] &\leq  \sqrt{(n \log n)/C} + m \leq 2 \sqrt{(n \log n)/C} \quad \textup{and} \\
    \Ex \bigg [ \sum_{i \leq n} d(\tilde{Y}_i, Y_i) \bigg ] &\leq  \sqrt{(n \log n)/C} \, .
\nonumber
 \end{align}

AKT \cite{ajtai1984optimal} construct a ``witness'' dual function with large empirical discrepancy to establish their lower bound \eqref{eq:AKT-lower-bound}. We will transform the AKT dual function into a function $f$ which is a dual function for our rightward matching problem,  and inherits (with small slack) the AKT lower bound on the empirical discrepancy.

The  (random) dual function $\tilde{f}: [0,1]^2 \rightarrow \mathbb{R}$ constructed in the lower bound proof of AKT \cite{ajtai1984optimal} satisfies the following properties:
\begin{itemize}
  \item $\tilde{f}$ is 1-Lipschitz.
  \item Large empirical discrepancy:
  \begin{align}
    \Ex \bigg [\sum_{i\leq n} \tilde{f}(\tilde{Y}_i) - \tilde{f}(\tilde{X}_i) \bigg ] \geq \epsilon_1 \sqrt{n \log n} \, .
  \label{eq:tf-lb}
  \end{align}
\end{itemize}
Their lower bound \eqref{eq:AKT-lower-bound} then follows since $\tilde{f}$ being 1-Lipschitz implies that $d(\tilde{X}_i, \tilde{Y}_{\tilde{\pi}(i)}) \geq \tilde{f}(\tilde{Y}_{\tilde{\pi}(i)}) - \tilde{f}(\tilde{X}_{i})$, which leads to $\Ex \big [ \inf_{\tilde{\pi}} \sum_{i \leq n} d(\tilde{X}_i, \tilde{Y}_{\tilde{\pi}(i)} \big ] \geq \Ex \big [\sum_{i\leq n} \tilde{f}(\tilde{Y}_i) - \tilde{f}(\tilde{X}_i)\big ] \geq \epsilon_1\sqrt{n \log n}$. 

We define $f(x,y) \triangleq  \tilde{f}(x,y)-x$. Here we subtract $x$ to ensure that $f(\cdot, y)$ is non-increasing for all $y \in [0,1]$. Since $\tilde{f}$ is 1-Lipschitz in $[0,1]^2$, it follows that $f(x, \cdot)$ is 1-Lipschitz for all $x \in [0,1]$. With these two properties, we know from Lemma~\ref{lemma:rightwardmatching-dual-function} that $f(\cdot)$ is a dual function for our rightward matching problem, and weak duality tells us that:
$$
\inf_{\pi \in \Pi_{\textup{rt}}} \sum_{i \leq n} d^{\textup{v}}(X_i, Y_{\pi(i)}) \geq  \sum_{i \leq n} f(Y_i) - f(X_i) =  \sum_{i \leq n} \tilde{f}(Y_i) - \tilde{f}(X_i) - x(Y_i) + x(X_i) \, .
$$
By definition of horizontal coordinates in our problem $ \sum_{i \leq n} x(Y_i) - x(X_i) = m$.
Taking expectations in the previous inequality, we obtain
\begin{align}
\Ex \bigg [ \inf_{\pi \in \Pi_{\textup{rt}}} \sum_{i \leq n} d^{\textup{v}}(X_i, Y_{\pi(i)}) \bigg ] \geq -m+ \Ex \bigg [ \sum_{i \leq n} \tilde{f}(Y_i) - \tilde{f}(X_i) \bigg ]\, .
\label{eq:rt-wd}
\end{align}
Now, since $\tilde{f}$ is 1-Lipschitz, we know that
$|\tilde{f}(\tilde{Y}_i)-\tilde{f}(Y_i)| \leq  d(\tilde{Y}_i,Y_i)$ and $|\tilde{f}(\tilde{X}_i)-\tilde{f}(X_i)| \leq  d(\tilde{X}_i,X_i)$. It follows that
\begin{align*}
\sum_{i \leq n} \tilde{f}(Y_i) - \tilde{f}(X_i) \geq \sum_{i \leq n} \big( \tilde{f}(\tilde{Y}_i) - \tilde{f}(\tilde{X}_i) \big ) - \sum_{i \leq n}d(\tilde{Y}_i,Y_i) - \sum_{i \leq n} d(\tilde{X}_i,X_i)\, .
\end{align*}
Plugging back into \eqref{eq:rt-wd} and leveraging \eqref{eq:tf-lb}, \eqref{eq:move-bounds} and $m \leq \sqrt{(n \log n)/C}$, we obtain
\begin{align*}
  \Ex \bigg [ \inf_{\pi \in \Pi_{\textup{rt}}} \sum_{i \leq n} d^{\textup{v}}(X_i, Y_{\pi(i)}) \bigg ]
  \geq \epsilon_1 \sqrt{n \log n} - 4 \sqrt{(n \log n)/C} \geq (\epsilon_1/3)  \sqrt{n \log n}\, ,
\end{align*}
where we ensure the second inequality holds by choosing $C \geq 36/\epsilon_1^2$.

\smallskip
As explained previously, this gives us a lower bound of $(\epsilon_1^2/36) \log m /m$ on the expected cost per match in the fully dynamic model for $N=N_0$ and large enough $m$. We can extend to any horizon $N \geq N_0$ by dividing the horizon into epochs of duration $N_0$ periods each. For an epoch of duration $N_0$, for arbitrary initial state at the beginning of the epoch, the aforementioned argument shows that the expected average cost in the epoch is at least $(\epsilon_1^2/36)\epsilon \log m /m$ for any matching policy. Even if the last epoch of length $N_0$ extends past period $N$ (because $N$ is not divisible by $N_0$) we immediately deduce a lower bound of $(\epsilon_1^2/72) \log m /m$ on the expected cost per match over the horizon $N$, for large enough $m$.
A lower bound of $\epsilon \log m /m$ for arbitrary $N \geq m^2$ and $m \geq 2$ follows, for $\epsilon \in(0, \epsilon_1^2/72 ]$ chosen to ensure that the lower bound holds for all $m \geq 2$. 
\end{proof}
\end{appendix}




%





\end{document}